\documentclass[11pt]{article}

\usepackage[margin=1in]{geometry}
\usepackage[numbers]{natbib}
\usepackage[utf8]{inputenc}
\usepackage[T1]{fontenc}
\usepackage{hyperref}
\usepackage{url}
\usepackage{booktabs}
\usepackage{amsfonts}
\usepackage{nicefrac}
\usepackage{microtype}
\usepackage{xcolor}
\usepackage{preamble}

\title{Near-Optimal Decentralized Stochastic Convex Optimization \\ over Networks}

\renewcommand{\thefootnote}{\fnsymbol{footnote}}

\author{
  Nitai Kluger\footnotemark[1]{\;\,}\footnotemark[2] \qquad
  Amit Attia\footnotemark[1]{\;\,}\footnotemark[2] \qquad
  Tomer Koren\footnotemark[3]
}
\date{}

\begin{document}
\maketitle

{\renewcommand{\footnotesize}{\scriptsize}
\footnotetext[1]{Equal contribution.}
\footnotetext[2]{Blavatnik School of Computer Science, Tel Aviv University; \texttt{\{nitaikluger,amitattia\}@mail.tau.ac.il}}
\footnotetext[3]{Blavatnik School of Computer Science, Tel Aviv University, and Google Research Tel Aviv; \texttt{tkoren@tauex.tau.ac.il}}
}

\renewcommand{\thefootnote}{\arabic{footnote}}
\setcounter{footnote}{0}

\begin{abstract}
We study decentralized stochastic smooth convex optimization, where $M$ workers minimize an average objective using local stochastic gradients and neighbor-only communication over a fixed gossip network.
A central question in this setting is to determine the largest number of workers that can be used under a total budget of $N$ gradient samples while still preserving the centralized $O(1/\sqrt N)$ statistical rate.
We introduce an accelerated decentralized method that preserves this rate for up to $\smash{M\lesssim \sqrt{\rho}\,N^{3/4}}$ workers, where $\rho$ is the spectral gap of the gossip network, improving the best prior maximal scaling of $\smash{M\lesssim \rho\sqrt N}$.
The method is based on a one-step-delayed stochastic acceleration scheme that enables workers to interleave minibatching with accelerated gossip while controlling residual disagreement, and its guarantee depends only logarithmically on the optimum-local heterogeneity.
We also establish a matching lower bound for linear-span decentralized first-order methods, showing that the method is optimal up to logarithmic factors.
\end{abstract}

\section{Introduction}

Distributed optimization has become a central paradigm in modern large-scale learning and inference.  In many applications, data and computation are spread across machines, so optimization must combine many parallel gradient samples with limited communication.  The central promise is statistical: for a fixed budget of \(N\) gradient samples, using \(M\) machines should reduce the number of sequential rounds without degrading the accuracy attainable from those samples.

A useful way to make this promise precise is the \emph{critical parallelism} threshold: the maximum number of machines \(M\) for which parallelism preserves the centralized statistical rate.  Beyond this threshold, the remaining optimization or communication error is no longer hidden by sampling noise, and additional machines yield diminishing returns for sample efficiency.  For smooth convex stochastic optimization, centralized synchronized minibatch SGD preserves the statistical rate for \(\smash{M\lesssim \sqrt N}\) workers, while centralized accelerated minibatching raises this scale to \(\smash{M\lesssim N^{3/4}}\)~\citep{dekel2012optimal,lan2012optimal}.  This \(\smash{N^{3/4}}\)-scale is therefore the natural benchmark for optimization parallelism.

In this paper we study the same question for decentralized stochastic optimization over networks.  In contrast to parameter-server or all-reduce models, decentralized methods communicate only along the edges of a network, formalized as an undirected graph over distributed nodes, typically through gossip averaging.  This removes a central bottleneck and is a natural abstraction for federated, peer-to-peer, and geographically distributed systems.  The price is that workers no longer hold identical iterates: gradients are sampled at different local points, and the disagreement between these points appears as a bias in the stochastic oracle seen by the network average.

This bias is especially delicate under data heterogeneity.  When different workers see different local objectives, their local gradients can point in systematically different directions even when evaluated at the same, nearly optimal model.  Thus residual network disagreement is not merely an implementation error; it can create a persistent bias that accelerated methods are known to amplify if it is not controlled carefully~\citep{devolder2014first}.

Modern distributed training systems often allow stochastic-gradient computation
and neighbor communication to proceed concurrently on the same worker.  This
motivates the nonblocking scheduling model studied here, where each wall-clock
round contains one gradient computation alongside a small, constant number of
local gossip steps.  Blocking schemes, which spend many consecutive rounds solely
on gossiping, may improve agreement, but they idle the stochastic oracles and waste
scarce sequential rounds without collecting gradient samples.
The relevant challenge is therefore not only to communicate efficiently, but to
obtain enough network mixing while continuous parallel sampling is underway.

Several prior works have tackled decentralized stochastic convex optimization
using gossip-based stochastic gradients, gradient tracking, and anytime
averaging~\citep{koloskova2020unified,koloskova2021improved,eisen2025enhancing}.
To date, the best critical-parallelism guarantee is due to
\citet{eisen2025enhancing}: their method preserves the centralized statistical
rate for 
\(
    \smash{M \lesssim \rho\sqrt N}
\) 
workers, where \(\rho\) is the spectral gap of the network graph.
Thus, on well-connected networks \((\rho=\Theta(1))\), it matches the
\(\smash{\sqrt N}\)-scale of centralized synchronized minibatch SGD, while on general
networks it loses a factor depending on the spectral gap.

This leaves a considerable gap to the accelerated \(N^{3/4}\)-scale.  Under a fixed sample
budget, the algorithm has only \(N/M\) distributed rounds, so the deterministic
optimization and network errors must decay fast enough to remain below the
\(O(\smash{1/\sqrt N})\) statistical term.  Non-accelerated stochastic approximation
does not provide this decay, and the spectral-gap dependence in existing
stochastic decentralized bounds is likewise non-accelerated relative to the
optimal deterministic network rates of \citet{scaman2017optimal}.  
A separate obstacle is data heterogeneity: the method of \citet{eisen2025enhancing}
suffers from a polynomial dependence on heterogeneity, which can dominate the
rate.  Gradient-tracking methods can remove explicit heterogeneity terms in
some regimes~\citep{nedic2017achieving,koloskova2021improved}, but the known
stochastic convex bounds imply worse critical parallelism in the sample budget
\(N\).

In this paper, we overcome these limitations with a novel delayed accelerated stochastic
method that combines two acceleration mechanisms.  The optimization dynamics
use a Nesterov-type accelerated stochastic recursion, while the network uses
accelerated gossip to mix local copies.  A one-step-delayed, momentum-compensated
query schedule ties these mechanisms together: workers keep sampling gradients
at an already available query point while accelerated gossip prepares the next
query point from the current local copies.  Hence, stochastic acceleration and
network mixing are interleaved rather than separated into blocking phases, and
the remaining disagreement enters the averaged dynamics as a controlled oracle
bias. The method thus performs a single gossip communication step in each round, in addition to one local stochastic gradient query at each node.

The resulting rate is optimal up to logarithmic factors within the standard class of deterministic linear-span (decentralized) first-order methods, with stochastic gradients as the
only source of algorithmic randomness.  It preserves the optimal stochastic
term, attains the optimal network dependence, and depends only logarithmically
on the data heterogeneity at the optimum.  This gives, to the best of our knowledge, the
first accelerated decentralized method in this smooth convex stochastic setting
that is rate-optimal up to logarithmic factors.

\begin{table}[t]
\centering
\small
\par\vspace{0.4em}
\setlength{\tabcolsep}{4pt}
\renewcommand{\arraystretch}{1.2}
\begin{tabular}{>{\raggedright\arraybackslash}m{0.27\textwidth}>{\raggedright\arraybackslash}m{0.36\textwidth}>{\raggedright\arraybackslash}m{0.26\textwidth}}
\hline\noalign{\vskip 0.3em}
Method & Iteration complexity to \(\epsilon\)-accuracy & Maximal parallelism \\
\noalign{\vskip 0.3em}\hline\hline\noalign{\vskip 0.3em}
\begin{tabular}[c]{@{}l@{}}
D-SGD\\
\citep{koloskova2020unified}
\end{tabular}
&
\(\displaystyle
    \frac{\sigma^2}{M\epsilon^2}
    +\frac{\sigma}{\sqrt{\rho}\,\epsilon^{3/2}}
    +\frac{\zeta_\star}{\rho\,\epsilon^{3/2}}
    +\frac{1}{\rho\epsilon}
\)
&
\begin{tabular}[c]{@{}>{\raggedright\arraybackslash}p{0.26\textwidth}@{}}
\(\Theta(\sqrt{\rho}\,N^{1/4})\) if \(\zeta_\star \approx 0\); \\
\(\Theta(\rho N^{1/4})\) otherwise
\end{tabular}
\\
\noalign{\vskip 0.3em}\hline\noalign{\vskip 0.3em}
\begin{tabular}[c]{@{}l@{}}
Gradient Tracking\\
\citep{koloskova2021improved}
\end{tabular}
&
\(\displaystyle
    \frac{\sigma^2}{M\epsilon^2}
    +\frac{\sigma}{c\sqrt{\rho}\,\epsilon^{3/2}}
    +\frac{1}{c\rho\epsilon}
\)
&
\begin{tabular}[c]{@{}>{\raggedright\arraybackslash}p{0.26\textwidth}@{}}
\(\widetilde \Theta(\sqrt{\rho}\,N^{1/4})\) \\
(treating \(c=\Theta(1)\))
\end{tabular}
\\
\noalign{\vskip 0.3em}\hline\noalign{\vskip 0.3em}
\begin{tabular}[c]{@{}l@{}}
DAT-SGD\\
\citep{eisen2025enhancing}
\end{tabular}
&
\(\displaystyle
    \frac{\sigma^2}{M\epsilon^2}
    +\frac{\sqrt{\sigma}+\sqrt{\zeta}}{\rho\epsilon}
    +\frac{1}{\epsilon}
\)
&
\begin{tabular}[c]{@{}>{\raggedright\arraybackslash}p{0.26\textwidth}@{}}
\(\Theta(\rho\sqrt N)\)
\end{tabular}
\\
\noalign{\vskip 0.3em}\hline\hline\noalign{\vskip 0.3em}
\begin{tabular}[c]{@{}l@{}}
\dadq \\
{\bfseries This work} (\cref{thm:distributed-gossip})
\end{tabular}
&
\(\displaystyle
    \frac{\sigma^2}{M\epsilon^2}
    +\frac{1}{\sqrt{\rho\epsilon}} \log\frac{N\zeta_\star}{\rho}
\)
&
\begin{tabular}[c]{@{}>{\raggedright\arraybackslash}p{0.26\textwidth}@{}}
\(\widetilde \Theta(\sqrt{\rho}\,N^{3/4})\)
\end{tabular}
\\
\noalign{\vskip 0.3em}\hline\noalign{\vskip 0.3em}
\begin{tabular}[c]{@{}l@{}}
Lower bound\\
{\bfseries This work} (\cref{thm:linear-span-lower-bound})
\end{tabular}
&
\(\displaystyle
    \frac{\sigma^2}{M\epsilon^2}
    +\frac{1}{\sqrt{\rho\epsilon}}
\)
&
\begin{tabular}[c]{@{}>{\raggedright\arraybackslash}p{0.26\textwidth}@{}}
No method can exceed \(\Theta(\sqrt{\rho}\,N^{3/4})\)
\end{tabular}
\\
\noalign{\vskip 0.3em}\hline\noalign{\vskip 1.0em}
\end{tabular}
\caption{Comparison of rates for convex decentralized stochastic optimization.
Here \(N\) is the total number of stochastic gradient samples, $M$ is the number of distributed nodes, \(\rho\) is the spectral gap, \(\sigma\) denotes the local noise scale, \(\zeta_\star\) denotes data heterogeneity measured at the optimum, \(\zeta\) denotes a global heterogeneity bound, and \(c\) is a negative-eigenvalue parameter appearing in the gradient-tracking analysis of \citet{koloskova2021improved}. The table suppresses numerical constants, smoothness/radius parameters, and most logarithmic factors.}
\end{table}

\subsection{Summary of contributions}

Our main contributions are summarized as follows:
\begin{enumerate}[label=(\roman*),leftmargin=*]
    \item We propose \dadq, an accelerated method for decentralized stochastic smooth convex optimization over a fixed gossip network, and prove an optimal finite-sample rate for it.
    For a total budget of $N$ stochastic-gradient computations over a network with spectral gap $\rho \in (0,1]$, the output $\widetilde x_{T,i}$ of each individual node attains convergence rate
    \[
        \E[f(\widetilde x_{T,i})-f(x^\star)]
        \lesssim
        \frac{\sigma R}{\sqrt N}
        +
        \frac{LR^2M^2}{\rho N^2}
        \cdot
        \operatorname{polylog}\!\left(
            \frac{N}{\rho},
            \frac{\zeta_\star+\sigma}{LR}
        \right),
    \]
    where $R$ is an upper bound on the initial distance to the solution, $\sigma^2$
    bounds the local variance of the stochastic gradients, and \(\zeta_\star\)  is the data heterogeneity at the optimum.  The first term is the
    centralized optimal stochastic rate, the second term is a network-limited accelerated rate, and~\(\zeta_\star\) enters only logarithmically.  Consequently, the centralized statistical rate is preserved for up to
    $
        \smash{ M \lesssim \sqrt{\rho}\,N^{3/4} }
    $
    machines, improving the state-of-the-art
    $
        \smash{ M \lesssim \rho\sqrt N }
    $
    maximal scaling of the best previous bounds~\citep{eisen2025enhancing}.  This improves both the dependence on the global number of gradient computations $N$ and the dependence on the spectral gap $\rho$.

    \item We prove a matching lower bound for the network-dependent term.  On path networks, any method in the above first-order model needs error at least \(\Omega(LR^2M^2/(\rho N^2))\) in the deterministic regime, after translating the communication-round lower bound into the sample-budget notation of our distributed model.  Together with the classical stochastic lower bound \(\Omega(\sigma R/\sqrt N)\), this shows that both terms in our upper bound are unavoidable up to logarithmic factors.

    \item We additionally provide a guarantee for the decentralized stochastic smooth and \(\mu\)-strongly convex setting via a restarted variant of our method.  This restart scheme reaches accuracy \(\epsilon\) with total sample complexity 
    \(
        \smash{
        N
        =
        \widetilde O(
            {\sigma^2}/{\mu\epsilon}
            +
            M\sqrt{{L}/{\mu\rho}}
            \log(1/{\epsilon})
        )
        }.
    \)
    We defer this extension and its proof to \cref{app:strongly-convex-restarts}.

    \item A key component of our analysis is a convergence bound for accelerated stochastic optimization with one-step-delayed gradients.  The bound shows that, by using momentum-compensated query points, one can retain the usual stochastic accelerated rate even though the gradient needed to form the next standard Nesterov query is not yet available.  This delayed-acceleration result may be of independent interest, and is described in \cref{sec:one-step-delayed}.
\end{enumerate}

\subsection{Related work}

\paragraph{Decentralized stochastic optimization with gossip.}
Combining gossip averaging with local gradient computation for distributed optimization was pioneered by \citet{nedic2009distributed}, who proposed and analyzed distributed subgradient methods for multi-agent convex optimization.  \citet{lian2017can} introduced Decentralized Parallel SGD (D-PSGD), showing that decentralized training can match the linear speedup of centralized methods while avoiding the communication bottleneck of parameter servers.  \citet{assran2019stochastic} extended gossip-based training to directed and time-varying topologies via stochastic gradient push, improving robustness to stragglers and asymmetric links.

Unified convergence analyses for decentralized SGD under changing topologies were subsequently developed by \citet{koloskova2020unified}, with rates depending on the spectral gap and data heterogeneity.  The gradient-tracking approach of \citet{nedic2017achieving} and its refined analysis by \citet{koloskova2021improved} can eliminate explicit heterogeneity terms at the cost of a weaker dependence on the spectral gap.  Variance-reduction and compression techniques have also been studied~\citep{tang2018communication,tang2018d2,li2019communication}.  Most recently, \citet{eisen2025enhancing} combined anytime averaging with accelerated stepsizes to achieve a critical parallelism of $M \lesssim \rho\sqrt{N}$, the best prior scaling.  These methods, however, rely on non-accelerated stochastic approximation and therefore fall short of the $N^{3/4}$-scale critical parallelism attainable in centralized accelerated settings.

\paragraph{Classical and centralized acceleration.}
Acceleration was first developed for centralized first-order optimization, beginning with Nesterov's accelerated gradient method~\citep{nesterov1983method}, which attains the optimal deterministic rate \(O(1/T^2)\) for smooth convex optimization after \(T\) steps.  \citet{lan2012optimal} extended acceleration to stochastic composite optimization, combining this fast deterministic term with the optimal \(O(\sigma/\sqrt T)\) stochastic rate.  Using acceleration in centralized minibatching improves the number of workers that preserve the centralized statistical rate from \(M\lesssim\sqrt N\) for synchronized minibatch SGD to \(M\lesssim N^{3/4}\)~\citep{dekel2012optimal}.  The centralized inexact-oracle analysis of \citet{devolder2014first} shows, however, that accelerated methods are sensitive to persistent oracle errors.

\paragraph{Decentralized and asynchronous acceleration.}
Accelerated methods have also been developed for deterministic decentralized optimization over networks, including optimal strongly convex rates on fixed networks~\citep{scaman2017optimal}, smooth convex rates~\citep{xu2020accelerated}, and time-varying-network guarantees~\citep{rogozin2020optimal,rogozin2021towards,kovalev2021adom}. In this line of work, accelerated gossip is incorporated through blocking communication phases.
Related asynchronous stochastic methods make acceleration compatible with delayed gradients through minibatching, at the cost of filtering out a constant fraction of the samples~\citep{tyurin2023optimal,attia2025faster}.  Our method instead uses one-step delayed acceleration to exploit every micro-round for both sampling and communication, while controlling the disagreement-induced bias.

\ifarxiv
\paragraph{Gossip algorithms and consensus.}
Gossip protocols have a long history in distributed computing, originating with the early work of \citet{tsitsiklis1986distributed} on distributed asynchronous gradient algorithms, which established that consensus-based gradient methods retain the convergence properties of their centralized counterparts under bounded communication delays.  Randomized gossip averaging was formalized by \citet{kempe2003gossip}, who showed that simple push-sum protocols compute aggregate statistics in $O(\log n)$ rounds on general networks, and by \citet{boyd2006randomized}, who characterized the averaging time of pairwise randomized gossip in terms of the second-largest eigenvalue of the gossip matrix and showed that optimal weight design reduces to a semidefinite program.  \citet{xiao2004fast} studied the fastest linear iterations for distributed averaging and demonstrated that optimally designed gossip matrices substantially outperform Laplacian-based heuristics.  This spectral-gap perspective, linking the mixing rate of the gossip matrix to the convergence of distributed algorithms, is central to the analysis of all subsequent gossip-based optimization methods, including ours.
\else
Due to space constraints, additional related work is discussed in \cref{app:additional-related-work}.
\fi

\section{Problem setup}
\label{sec:distributed-problem-setup}

We first specify the optimization problem, stochastic oracle model, and communication model used by the distributed method.
We want to minimize the finite-sum objective
\[
    f(x)=\frac1M\sum_{i=1}^M f_i(x),
\]
using $M$ decentralized workers.  We assume that the solution set is nonempty, and fix a minimizer $x^\star\in\arg\min_x f(x)$.  Each $f_i:\R^d\to\R$ is convex and $L$-smooth, i.e.,
$\norm{\nabla f_i(x)-\nabla f_i(y)}\le L\norm{x-y}$ for all $x,y\in\R^d$.

Worker $i$ has access to a local stochastic gradient oracle $G_i(x,\xi)$ satisfying
\[
    \E[G_i(x,\xi)\mid x]=\nabla f_i(x),
    \qquad
    \E\!\left[\norm{G_i(x,\xi)-\nabla f_i(x)}^2\mid x\right]\le \sigma^2.
\]
All oracle samples are conditionally independent given past queries.

\paragraph{Bounded data heterogeneity.}
We assume the local objectives have \emph{bounded gradient heterogeneity at the optimum}: there is a constant $\zeta_\star\ge 0$ such that
\begin{equation}
\label{eq:bounded-heterogeneity}
    \frac1M\sum_{i=1}^M
    \norm{\nabla f_i(x^\star)-\nabla f(x^\star)}^2
    \le
    \zeta_\star^2.
\end{equation}
This is an optimum-local heterogeneity condition, similar to the assumption 
used by \citet{koloskova2020unified}, and should be distinguished
from the stronger global bounded-heterogeneity assumptions often used in
decentralized stochastic optimization, which require this average squared
gradient mismatch to be uniformly bounded for all \(x\)
\citep[e.g.,][]{lian2017can,tang2018communication,tang2018d2,li2019communication,he2022byzantine,eisen2025enhancing}.
Other relaxations bound heterogeneity at the origin \citep{lu2021optimal}.
Gradient-tracking methods can completely remove explicit heterogeneity terms
in some regimes~\citep{nedic2017achieving,koloskova2021improved}, at the
expense of weaker dependence on the spectral gap $\rho$. Our analysis here
does require a finite optimum-local heterogeneity parameter, but
\(\zeta_\star\) enters the final guarantee only logarithmically.

\paragraph{Decentralized communication.}
The $M$ workers form a fixed network with an undirected communication graph $\mathcal G=(\{1,\ldots,M\},E)$.  The basic time unit is a synchronous micro-round.  In one micro-round, each worker may perform local computation, make at most one stochastic-gradient oracle call, and perform a small constant number (ideally one) of neighbor communication step; there is no parameter server or all-reduce operation.  We restrict attention to nonblocking schedules: algorithms may not insert additional phases consisting of many back-to-back gossip steps with no local oracle work.  Thus a budget of $N$ stochastic-gradient computations over $M$ workers corresponds to at most $N/M$ micro-rounds, and communication must be interleaved with those rounds rather than appended in extra blocking phases.

As in the gossip-based decentralized model of \citet{scaman2017optimal}, communication is implemented through a \emph{gossip matrix} $P\in[0,1]^{M\times M}$ supported on the graph, so $P_{ij}=0$ whenever $i\ne j$ and $\{i,j\}\notin E$. One communication round with $P$ acts as follows: when the local vectors are the columns of $Z\in\R^{d\times M}$, all workers simultaneously exchange messages along the support of $P$ and compute the corresponding columns of $ZP$. 
We assume that $P$ is symmetric and doubly stochastic.  Formally,
\[
    P=P^\top,
    \qquad
    P\mathbf 1=\mathbf 1,
    \qquad
    \mathbf 1^\top P=\mathbf 1^\top.
\]
Let the eigenvalues of $P$ be ordered as
$
    1=\lambda_1(P)\ge \lambda_2(P)\ge \cdots\ge \lambda_M(P)\ge -1.
$
We assume that $P$ has spectral gap at least $\rho\in(0,1]$, in the sense that
\begin{equation}
\label{eq:gossip-spectral-gap}
    1-\lambda_2(P) \geq \rho > 0.
\end{equation}
We note that this spectral condition is strictly weaker than the one-step consensus-contraction assumption used in previous work~\citep[e.g.,][]{koloskova2020unified,koloskova2021improved,eisen2025enhancing}.  For a fixed symmetric gossip matrix, such a contraction assumption effectively requires
$
    \max\{|\lambda_2(P)|,|\lambda_M(P)|\}<1,
$
and therefore separates the most negative eigenvalue of $P$ away from $-1$.  In contrast, our analysis only needs the weaker condition in \cref{eq:gossip-spectral-gap} and allows $\lambda_M(P)$ to be arbitrarily close to $-1$.%
\footnote{The distinction is already visible on the two-node graph with
$
    P=
    \begin{psmallmatrix}
        0 & 1\\
        1 & 0
    \end{psmallmatrix}.
$
Here \(P\) is symmetric and doubly stochastic, \(\lambda_2(P)=-1\), and therefore \(1-\lambda_2(P)=2\), so \cref{eq:gossip-spectral-gap} holds for every \(\rho\le 1\); however, for the row vector \(z=(1,-1)\), we have \(zJ=0\) and \(zP=(-1,1)\), so \(\|zP-zJ\|_2=\|z-zJ\|_2\).  Thus no strict one-step consensus contraction holds.}

\section{Algorithm}
\label{sec:distributed-algorithm}

We now present the decentralized doubly accelerated method; the full procedure
is given in \cref{alg:distributed-delayed-gossip}, and its convergence guarantee
is stated in \cref{thm:distributed-gossip}.  We first describe the macro-step
structure conceptually, before giving the full pseudocode.

The method runs in macro steps of \(B\) synchronous micro-rounds.  At the
beginning of macro step \(t\), the algorithm has three states: a primal state
\(X_t\), a dual/gradient-accumulation state \(Z_t\), and a query state
\(Q_t\).  Over \(T\) macro steps and \(M\) workers, the method uses
\(N=MBT\) stochastic-gradient samples.  Conceptually, each macro step pipelines
two activities: workers sample at the already prepared delayed query \(Q_t\),
while they gossip the current primal state \(X_t\) so it can define the next
query.  The important departure from standard
Nesterov-type stochastic acceleration is that \(Q_t\) is not formed from the
current primal state.  It is a one-step-delayed query, computed from previously
mixed primal copies.
\cref{sec:one-step-delayed} analyzes this technique in the classical
single-node setting.

This delay lets one macro step use its \(B\) micro-rounds for two purposes at
once.  First, every worker samples \(B\) stochastic gradients at the already
available query point \(q_{t,i}\), forming a local minibatch and using the full
sampling budget.  Second, during the same \(B\) micro-rounds, the workers run
accelerated gossip on the current primal copies \(X_t\).  The gossip block is
Nesterov acceleration applied to the quadratic disagreement objective associated
with \(I-P\); see \cref{app:nesterov-gossip}.  Thus the point being sampled is
fixed and available, while the point being communicated is prepared for the next
query.  By the end of the macro step, gossip has produced a nearly synchronized
primal copy \(\widetilde X_t\), which is then used to form \(Q_{t+1}\).

The one-step delay is what makes this separation possible: sampling does not
wait for communication, and communication does not disturb the point being
queried.  The price is that the query sequence must still behave like a
Nesterov query sequence.  For this reason \(Q_{t+1}\) includes a momentum
compensation term, which anticipates the next inertial displacement and makes
the delayed query mismatch controllable by the same stability terms that drive
the accelerated analysis.

\begin{algorithm}[ht]
\caption{Decentralized Doubly Accelerated SGD (\dadq)}
\label{alg:distributed-delayed-gossip}
\begin{algorithmic}[1]
\STATE {\bf Input:} common $x_0\in\R^d$, macro horizon $T$, local minibatch/accelerated-gossip length $B$, symmetric doubly stochastic gossip matrix $P$, gossip stepsize $\alpha_{\rm g}\in(0,1]$, gossip momentum $\beta_{\rm g}\in[0,1)$, weights $(a_t)_{t=0}^{T-1}$, averaging parameters $(\theta_t)_{t=0}^T$, momentum parameters $(m_t)_{t=0}^{T+1}$, and dual stepsizes $(\eta_t)_{t=0}^T$.
\STATE Set $X_{-1}=X_0=Z_0=Q_0=\widetilde X_{-1}=x_0\mathbf 1^\top$.
\FOR{$t=0,1,\ldots,T$}
    \STATE Initialize $Y_t^{(-1)}=Y_t^{(0)}=X_t$ and $g_{t,i}=0$ at each node $i$.
    \FOR{$s=1,2,\ldots,B$}
        \STATE Each node $i$ samples one gradient at its current delayed query $q_{t,i}$, the $i$th column of $Q_t$, and accumulates
        \[
            g_{t,i}\leftarrow g_{t,i}+\frac1B G_i(q_{t,i},\xi_{t,i}^{(s)}).
        \]
        \STATE In the same round, perform one Nesterov accelerated-gossip step:\footnotemark
        \[
            V_t^{(s)}
            =
            Y_t^{(s-1)}+\beta_{\rm g}\bigl(Y_t^{(s-1)}-Y_t^{(s-2)}\bigr),
            \qquad
            Y_t^{(s)}
            =
            (1-\alpha_{\rm g})V_t^{(s)}+\alpha_{\rm g}V_t^{(s)}P .
        \]
    \ENDFOR
    \STATE Set the accelerated mixed primal copy
    \(
        \widetilde X_t
        =
        Y_t^{(B)},
    \)
    and form the next delayed query matrix
    \[
        Q_{t+1}
        =
        \widetilde X_t
        +
        m_t(1+m_{t+1})(\widetilde X_t-\widetilde X_{t-1}),
    \]
    whose columns are $q_{t+1,i}$.
    \STATE Set $G_t=[g_{t,1},\ldots,g_{t,M}]$ and update local dual and primal copies:
    \[
        Z_{t+1}=Z_t-\eta_tG_t,
        \qquad
        X_{t+1}=(1-\theta_t)X_t+\theta_t Z_{t+1}.
    \]
\ENDFOR
\STATE Each node $i$ outputs its local column $\widetilde x_{T,i}$ of $\widetilde X_T$.
\end{algorithmic}
\end{algorithm}
\footnotetext{Only multiplication by $P$ requires neighbor communication.  Scalar multiplications and additions/subtractions are local, so each recurrence step uses one gossip communication round.}

Previous decentralized methods \citep{koloskova2020unified,eisen2025enhancing} tightly couple the variables used for communication with the variables used for gradient computation.  Although such coupling can be useful, our one-step-delayed construction deliberately separates the two roles.  This separation is what allows us to combine accelerated gossip with accelerated optimization, two tools that are powerful but sensitive to inexactness~\citep{devolder2014first}. An alternative way to accommodate these techniques is to reserve some rounds purely for communication, effectively leaving a constant fraction of the available gradient queries unused, as in the approaches of \citet{tyurin2023optimal,attia2025faster}. Such blocking communication phases are outside the nonblocking round model above; the one-step-delayed scheme avoids them because every micro-round can be used both for sampling a stochastic gradient and for performing a gossip step.

\section{Main results}
\label{sec:distributed-main-result}

Next we present our convergence guarantee for \cref{alg:distributed-delayed-gossip}, and the matching lower bound for linear-span decentralized methods.

\begin{theorem}[Distributed delayed-gossip rate]
\label{thm:distributed-gossip}
Assume \cref{eq:bounded-heterogeneity}, and let $R>0$ satisfy
$
    R\ge \norm{x_0-x^\star}.
$
Fix a total sample budget $N$, and define
\[
    \Lambda_N
    :=
    \frac{100 N}{\sqrt M}
    \left(
        1+
        \frac{\zeta_\star+\sigma}{LR}
    \right).
\]
Choose
$
    B
    :=
    \bigl\lceil
        \max\{
            1,\,
            (20/\sqrt\rho) \log(\Lambda_N/\rho)
        \}
    \bigr\rceil,
$
and assume for simplicity that $T:=N/(MB)$ is a positive integer.
Set the accelerated gossip parameters
$\alpha_{\rm g}:=\tfrac12$,\;
$\beta_{\rm g}:=(1-\sqrt{\rho/2})/(1+\sqrt{\rho/2})$,
and the algorithmic parameters
\begin{gather*}
    a_t:=\frac{t+1}{128L},\quad
    A_t:=\sum_{i=0}^t a_i=\frac{(t+1)(t+2)}{256L},\quad
    \theta_t:=\frac{a_t}{A_t}=\frac{2}{t+2}\;\;(0\le t<T),\quad
    \theta_T:=0, \\
    m_0:=0,\quad
    m_t:=\frac{t-1}{t+2}\;\;(1\le t\le T),\quad
    m_{T+1}:=0, \\
    H_{T-1}:=\sqrt{\textstyle\sum_{i=0}^{T-1}a_i^2},\quad
    \beta:=1+\frac{\sigma}{R\sqrt{MB}}H_{T-1},\quad
    \eta_t:=\frac{a_t}{\beta}\;\;(0\le t<T),\quad
    \eta_T:=0.
\end{gather*}
Run \cref{alg:distributed-delayed-gossip} with these parameters for $T$ macro steps, and let $\widetilde x_{T,i}$ be the output held by node $i$.  Then, for every node $i$,
\[
    \E[f(\widetilde x_{T,i})-f(x^\star)]
    \le
    \frac{14\sigma R}{\sqrt N}
    +
    \frac{2^{19}LR^2M^2}{\rho N^2}
    \log^2 \frac{\Lambda_N}{\rho}.
\]
\end{theorem}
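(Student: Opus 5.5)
The plan is to reduce the decentralized dynamics to a single-node accelerated stochastic method with one-step-delayed, biased gradients. That reduction lets us invoke the delayed-acceleration bound announced for \cref{sec:one-step-delayed}, and then we control the bias through the accelerated gossip contraction.

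\textbf{Step 1: project onto the network average.} Let $J=\tfrac1M\mathbf 1\mathbf 1^\top$. Since $P$ is doubly stochastic, every accelerated-gossip step preserves column averages. Writing $\bar x_t=X_tJ e$ (and similarly $\bar z_t,\bar q_t$), the averages therefore evolve exactly as
\[
\bar z_{t+1}=\bar z_t-\eta_t\bar g_t,\qquad \bar x_{t+1}=(1-\theta_t)\bar x_t+\theta_t\bar z_{t+1},\qquad \bar q_{t+1}=\bar x_t+m_t(1+m_{t+1})(\bar x_t-\bar x_{t-1}).
\]
This is precisely the single-node one-step-delayed scheme. The oracle is $\bar g_t=\tfrac1M\sum_i\nabla f_i(q_{t,i})+\text{noise}$, where the noise is conditionally unbiased with variance $\sigma^2/(MB)$. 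Its bias is $\delta_t=\tfrac1M\sum_i(\nabla f_i(q_{t,i})-\nabla f_i(\bar q_t))$, which by smoothness satisfies $\|\delta_t\|\le L\,\|Q_t-\bar q_t\mathbf 1^\top\|_F/\sqrt M$.

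I will invoke the delayed-acceleration theorem of \cref{sec:one-step-delayed} in an inexact-oracle form. That form gives
\[
A_{T-1}\E[f(\bar x_T)-f^\star]\lesssim \beta R^2+\tfrac{\sigma^2}{\beta MB}H_{T-1}^2+\sum_t a_t\E[\langle\delta_t,x^\star-\bar z_{t+1}\rangle]+\dots
\]
If only the exact-oracle version is stated, I will redo its potential argument while carrying the $\delta_t$ terms. With the stated choice of $\beta$ and $A_{T-1}\asymp T^2/L$, the stochastic part gives $\sigma R/\sqrt{MBT}=\sigma R/\sqrt N$, and the deterministic part gives $LR^2/T^2=LR^2M^2B^2/N^2$. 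Since $B^2\asymp \rho^{-1}\log^2(\Lambda_N/\rho)$, this is exactly the second term of the claimed bound.

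\textbf{Step 2: control consensus error.} Accelerated gossip (\cref{app:nesterov-gossip}), with $\alpha_{\rm g}=\tfrac12$, turns the spectral gap of $\tfrac12(I+P)$ (at least $\rho/2$, with nonnegative spectrum, which is why $\lambda_M$ near $-1$ is harmless) into contraction of the disagreement. After $B$ steps,
\[
\|\widetilde X_t(I-J)\|_F\le 2(1-\sqrt{\rho/2})^{B}\|X_t(I-J)\|_F\le \epsilon_B\|X_t(I-J)\|_F,
\]
where $\epsilon_B\le(\rho/\Lambda_N)^{c}$ by the choice of $B$. The consensus errors $X_t(I-J)$ and $Z_t(I-J)$ are driven by local gradients: $Z_{t+1}(I-J)=Z_t(I-J)-\eta_tG_t(I-J)$. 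Adding and subtracting $\nabla f_i(x^\star)$ and using smoothness gives
\[
\|G_t(I-J)\|_F\lesssim L\|Q_t-x^\star\mathbf 1^\top\|_F+\sqrt M\zeta_\star+\text{noise}.
\]
Here $\|Q_t-x^\star\mathbf 1^\top\|_F$ splits into the consensus part plus $\sqrt M\|\bar q_t-x^\star\|$.

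The key difficulty is that the $Z$-disagreement is never gossiped and grows roughly like $\sum_s\eta_s(\dots)$, i.e. polynomially in $T$. The queries, however, only see $\widetilde X$, which is contracted by $\epsilon_B$. Each $Q_t$ disagreement is therefore at most $\epsilon_B\cdot\mathrm{poly}(T)\cdot(\sqrt M(LR+\zeta_\star+\sigma)/L)$ times a bound on $\max_s\|\bar q_s-x^\star\|$. Because $\Lambda_N$ absorbs $N$, $\sqrt M$ and $(\zeta_\star+\sigma)/(LR)$, this polynomial factor is killed. This is why $\zeta_\star$ enters only logarithmically.

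\textbf{Step 3: close the loop.} The bias term requires a bound on $\|\bar z_t-x^\star\|$ and $\|\bar q_t-x^\star\|$, which itself depends on the bias. I will run an induction on $t$ with hypothesis $\E\|\bar z_s-x^\star\|^2\le CR^2$ for $s\le t$. This follows from the potential inequality of the delayed scheme, since its distance term $\beta\|\bar z_t-x^\star\|^2/2$ is controlled once the accumulated bias is at most $R^2$. The iterates $\bar x,\bar q$ are convex combinations plus momentum-compensated extrapolations of the $\bar z$, so they inherit $O(R)$ bounds; the momentum factor is bounded because $m_t(1+m_{t+1})\le2$.

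Finally, to pass from $\bar x_T$ to node $i$'s output $\widetilde x_{T,i}$, I use $f(\widetilde x_{T,i})\le f(\bar x_T)+\langle\nabla f(\bar x_T),\cdot\rangle+\tfrac L2\|\cdot\|^2$ together with the tiny $\widetilde X_T$ disagreement. Note that $\widetilde X_T$ averages to $\bar x_T$ because $\theta_T=\eta_T=0$.

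The main obstacle is Step 3. The bias is multiplied by the accelerated weights $a_t\sim t/L$, and in principle it feeds back into the very iterates that bound it. Making the induction close with explicit constants, including the delayed-query mismatch between $\bar q_t$ and the ideal Nesterov query, is where most of the work lies.
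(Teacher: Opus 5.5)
Your proposal is correct and has the same architecture as the paper's proof. The network average is reduced to the one-step-delayed scheme of \cref{thm:main}, with oracle variance $\sigma^2/(MB)$ and bias at most $L\norm{Q_t-\bar Q_t}_F/\sqrt M$. The copies $X_t,Z_t$ are never gossiped and drift polynomially in $T$, but that drift reaches the queries only through $\widetilde X$, so it is multiplied by $\tau$. $\Lambda_N$ is chosen so that $\tau$ kills every polynomial factor together with $(\zeta_\star+\sigma)/(LR)$. A smoothness argument then transfers the bound from $\bar x_T$ to $\widetilde x_{T,i}$.

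The one genuine difference is how you close the bias--radius loop. The paper never keeps the radius at $O(R)$. It applies the crude \cref{lem:expected-distance} to the averaged dynamics, getting $Q_T^{\rm rms}\le C_T+2^{10}(T+1)^2A_{T-1}\bar\epsilon$ with $C_T$ polynomial in $T$. It then solves a single global linear inequality in $Q_T^{\rm rms}$ (coefficient $c_T\le 5/16$) and pays every polynomial loss with the very small $\tau$; this is what the constant $20$ in $B$ is for.

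Your forward induction instead needs a slightly strengthened \cref{lem:det-estimate}. Since $z_T$ minimizes the $\beta$-strongly convex $h_{T-1}$, you may keep $\frac\beta2\norm{u-z_T}^2$ on the left; the paper drops it. The lemma can be applied at every intermediate horizon $t\le T$ with the same $\beta$, because its proof only uses $\beta\ge1$, and $\beta\ge\sigma H_{T-1}/(R\sqrt{MB})\ge\sigma H_{t-1}/(R\sqrt{MB})$ still bounds the noise term by $2R^2$. This gives $\E\norm{\bar z_t-x^\star}^2\le 4R^2$ whenever $\sum_s a_s\epsilon_s\le R/8$. The $O(R)$ bounds for $\bar x_t,\bar q_t$ then follow from the convex-combination structure and $m_{t-1}(1+m_t)\le 2$. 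The induction is causal, because $b_t$ depends only on $\bar q_s$ for $s\le t-2$.

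Your route gives a sharper radius and in principle tolerates a larger $\tau$. The paper's route reuses \cref{thm:main} and its crude lemma as black boxes and avoids a time-indexed induction.

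Three things to fix when you write it out.
\begin{enumerate}
\item The contraction $2(1-\sqrt{\rho/2})^B$ you state is false for this $B$. If $\rho$ is itself an eigenvalue of $I-P$, the gossip recursion has the double root $1-\sqrt{\rho/2}$, and the factor is $(1+\sqrt{\rho/2}\,B)(1-\sqrt{\rho/2})^B$. Use $\tau=\frac{2}{\sqrt\rho}(1-\sqrt{\rho/2})^{B/2}$ from \cref{thm:nesterov-gossip-convergence}, which is still polynomially small in $\rho/\Lambda_N$.
\item $G_t^\perp$ contains $L\norm{Q_t^\perp}_F$, so the disagreement recursion is self-coupled. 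It needs the absorption $5LS_T^\eta\tau\le\frac12$, or an explicit causal induction.
\item The stated constants require the paper's bookkeeping. The averaged iterate gets $7\sigma R/\sqrt N+129D_N$, the local transfer doubles this to $14$ and $259$, and $259\cdot1600\le 2^{19}$ once $B\le\frac{40}{\sqrt\rho}\log(\Lambda_N/\rho)$.
\end{enumerate}
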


We give a proof sketch in \cref{sec:distributed-gossip-key-ideas} and defer the full proof of \cref{thm:distributed-gossip} to \cref{app:distributed-delayed-gossip}.
Next, we state a lower bound for linear-span methods, showing the guarantee is nearly optimal.

\begin{theorem}[Lower bound for linear-span methods]
\label{thm:linear-span-lower-bound}
There is a universal constant \(c>0\) such that the following holds.  For
every \(M\ge6\), \(L>0\), \(R>0\), and \(S\ge1\), on the \(M\)-node path graph
with spectral gap \(\rho=\Theta(M^{-2})\), there exist convex \(L\)-smooth
functions \(f_1,\ldots,f_M:\R^d\to\R\) in dimension
\(d=\Theta(S\sqrt\rho)\) such that, for
$
    f(x)=\frac1M\sum_{i=1}^M f_i(x),
$
the minimizer \(x^\star\) satisfies \(\norm{x^\star}\le R\), and every
deterministic linear-span decentralized first-order method run for \(S\) gossip
communication rounds has averaged output \(\bar x_S\) satisfying
\[
    f(\bar x_S)-f(x^\star)
    \ge
    cLR^2
    \min\left\{1,\frac1{\rho S^2}\right\}.
\]
\end{theorem}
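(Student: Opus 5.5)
This is the Scaman et al.\ (2017) lower-bound construction, adapted to smooth convex (non-strongly convex) objectives and to the path graph. We split Nesterov's worst-case quadratic between the two ends of the path. Take \(M\ge 6\) nodes on a path, and let \(D=\lfloor M/3\rfloor\). Partition the nodes into three groups of (roughly) equal size: a left end block \(\mathcal L\) of the first \(D\) nodes, a right end block \(\mathcal R\) of the last \(D\) nodes, and a middle block. The two end blocks are separated by at least \(D\) edges.

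\textbf{Construction.} Let \(h_k\) be Nesterov's chain quadratic in dimension \(d=2k+1\) (so \(k\) links in the chain), scaled to smoothness \(L\):
\[
    h(x)=\tfrac{L}{8}\Bigl(x_1^2+\sum_{j=1}^{d-1}(x_j-x_{j+1})^2+x_d^2\Bigr)-\tfrac L4 x_1 .
\]
Split \(h=h^{\rm odd}+h^{\rm even}\), where \(h^{\rm odd}\) collects the couplings \((x_j-x_{j+1})^2\) for odd \(j\) together with the linear term, and \(h^{\rm even}\) collects those for even \(j\) (plus the \(x_d^2\) term). Set \(f_i=\tfrac{M}{3}\cdot 2\,h^{\rm odd}\) on \(\mathcal L\), \(f_i=\tfrac M3\cdot 2\,h^{\rm even}\) on \(\mathcal R\), and \(f_i=0\) in the middle. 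Then \(f=\frac1M\sum_i f_i\) is a constant multiple of \(h\). Each \(f_i\) is \(O(LM)\)-smooth, which is too large. To fix this, rescale the scalar prefactor by \(1/M\) (i.e.\ use \(L\)-smooth pieces) and accept that \(f\) has effective smoothness \(\Theta(L)\) times a constant. Concretely, take each end node's function to be \(L\)-smooth, so \(f\) is \(\Theta(L)\)-smooth. Rescale \(x\) so that the minimizer satisfies \(\|x^\star\|\le R\); Nesterov's minimizer has \(\|x^\star\|^2=\Theta(d)\).

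\textbf{Propagation argument.} In the linear-span model, node \(i\)'s memory is the span of its own gradient outputs and of received messages. Starting from \(0\), only \(\mathcal L\) can activate coordinate \(1\) (via the linear term). By the odd/even splitting, a node in \(\mathcal L\) with support in \(\{1,\dots,j\}\), \(j\) odd, can extend its support to \(j+1\), while \(\mathcal R\) nodes extend even-indexed \(j\) to \(j+1\). Each extension therefore requires a message to cross between the ends, costing at least \(D=\Theta(M)\) gossip rounds. After \(S\) rounds, every node's iterate, and hence the averaged output \(\bar x_S\), has support in the first \(k'\le 1+S/D\) coordinates. Nesterov's classical bound then gives
\[
    f(\bar x_S)-f^\star\ge c\,\frac{L\|x^\star\|^2}{(k'+1)^2}\ge c\,\frac{LR^2}{(1+S/M)^2}.
\]
Here \(d=\Theta(S/M)=\Theta(S\sqrt\rho)\) is chosen so that the chain is not exhausted. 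With \(\rho=\Theta(M^{-2})\) this equals \(cLR^2\min\{1,1/(\rho S^2)\}\). The spectral gap of a standard path gossip matrix (e.g.\ the lazy Metropolis matrix) is \(\Theta(1/M^2)\).

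\textbf{Main obstacle.} The hard step is making the support-propagation bookkeeping rigorous for a general linear-span protocol, in which a node may combine arbitrary past messages and local gradients. The plan is to follow Scaman et al.'s induction on the rounds. The invariant is that at round \(s\), a node at distance \(\delta\) from the end block that last extended the support has support in the coordinates already reached, plus one new coordinate only if \(\delta\le s-s_0\). We must also verify that the zero functions in the middle add nothing. A secondary point is the constant bookkeeping that keeps each \(f_i\) \(L\)-smooth while \(f\) remains a constant multiple of \(h\). This holds because the \(2D\) active nodes are a constant fraction \(\ge 1/3\cdot 2\) of all \(M\) nodes, and it is the reason for assuming \(M\ge6\).
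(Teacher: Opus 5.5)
Your proposal is correct and is essentially the paper's proof. Both split a chain quadratic by odd/even couplings between the two end thirds of the path, with zero functions in the middle. Both use a zero-chain argument in which every new coordinate beyond the second must cross the $\Theta(M)=\Theta(\rho^{-1/2})$-long middle block. Both finish with a Nesterov-type gap bound on $E_s$ with $d=\Theta(s)$. The only difference is that the paper drops the $x_d^2$ term, so $x^\star\propto\mathbf 1$, and bounds the gap by a direct Cauchy--Schwarz step instead of quoting Nesterov's $f_{2k+1}$ estimate.

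There are two harmless slips. First, the right prefactor on the end blocks is $M/D=\Theta(1)$ (the paper's $M/m$, times a small absolute constant), so the $O(LM)$-smoothness issue you fix never actually arises. Second, $\mathcal L$ holds both the linear term and the $(1,2)$ coupling, so it reaches coordinate $2$ without communication; the support bound is therefore $2+\lfloor S/D\rfloor$ (the paper's $s$) rather than $1+S/D$, which only changes constants.
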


The proof of \cref{thm:linear-span-lower-bound} appears in
\cref{app:linear-span-lower-bound}.  The theorem is a communication lower
bound: on the path, the hard instance forces useful coordinate information to
alternate between two blocks separated by graph distance
\(\Theta(\rho^{-1/2})\), so each such alternation costs
\(\Theta(\rho^{-1/2})\) gossip rounds.  In our distributed model over $M$ machines, 
a budget of \(N\) local gradients implies \(\Theta(N/M)\) gossip rounds, since each 
of the \(M\) nodes takes one sample and performs one gossip step per distributed
round.  Substituting \(S=\Theta(N/M)\) in the asymptotic regime
\(S\gtrsim \rho^{-1/2}\) gives
\[
    \Omega\!\left(\frac{LR^2M^2}{\rho N^2}\right).
\]
Thus, in the deterministic case, the network-dependent term in
\cref{thm:distributed-gossip} is unimprovable up to logarithmic factors for
deterministic linear-span decentralized methods, even on path networks.  The
stochastic term is optimal for an even simpler reason: even if communication were
free, the problem would contain ordinary stochastic convex optimization with
\(N\) stochastic-gradient samples, whose minimax error is
\(\smash{\Omega(\sigma R/\sqrt N)}\) \citep[e.g.,][]{agarwal2012information}.  Thus,
the upper bound in \cref{thm:distributed-gossip} is optimal up to logarithmic 
factors (at least for linear-span decentralized methods whose only randomness comes from 
the stochastic gradients): the first term is forced by stochastic
oracle noise, and the second term is forced by network-limited information
propagation.

\subsection{Proof sketch of \texorpdfstring{\cref{thm:distributed-gossip}}{Theorem 1}}
\label{sec:distributed-gossip-key-ideas}

\cref{alg:distributed-delayed-gossip} combines two Nesterov-type acceleration mechanisms: accelerated optimization of the iterates and accelerated gossip for reducing network disagreement.  This combination is delicate because accelerated gradient methods are notoriously sensitive to persistent oracle errors: even small bias or inexactness in the gradient information can accumulate severely across iterations~\citep{devolder2014first}. In the decentralized setting such inexactness arises naturally, because limited communication means that the local gradients are evaluated at different local query points rather than at the exact network average.  To use acceleration without losing its rate, the disagreement between the local query points must therefore be sufficiently small.  This is incompatible with querying a newly formed accelerated point immediately: the point must first be mixed sufficiently across the network. The one-step-delayed accelerated method resolves this tension: it lets the algorithm sample gradients at query points that were already computed and communicated in previous rounds, while the current primal copy undergoes accelerated gossip to within sufficient mixing error for use in the next query.

\paragraph{One-step-delayed acceleration.}

The proof is built on a primal-dual recursion $z_{t+1}=z_t-(a_t/\beta)g_t$, $x_{t+1}=(1-\theta_t)x_t+\theta_t z_{t+1}$, initialized at $z_0=x_0$, with weights $a_t>0$, averaging parameters $\theta_t=a_t/A_t$ where $A_t=\sum_{i=0}^t a_i$, and regularization $\beta>0$.  By a standard telescoping argument (substituting $a_tg_t=\beta(z_t-z_{t+1})$ and using the three-point identity), combined with smoothness and convexity of $f$ at a query point $r_t$, one obtains for any comparator $u$ (later set to a minimizer $x^\star$) the estimate
\begin{equation}\label{eq:master-estimate}
    A_{T-1}\bigl(f(x_T)-f(u)\bigr)
    \le
    \frac{\beta}{2}\norm{u-x_0}^2
    -
    \frac{\beta}{2}\sum_{t=0}^{T-1}\norm{z_{t+1}-z_t}^2
    +
    \frac L2\sum_{t=0}^{T-1}A_t\norm{x_{t+1}-r_t}^2 .
\end{equation}
The first term is a one-time distance penalty.  The second, negative term, is a ``stability budget'' that can absorb positive errors; its presence is what makes acceleration work.  The challenge is to choose the query point $r_t$ so that the last sum is absorbed by this stability budget.

\emph{Standard acceleration (no delay).}
In the standard Nesterov scheme one queries at the momentum point
\[
    y_t:=(1-\theta_t)x_t+\theta_t z_t
    =
    x_t+m_t(x_t-x_{t-1}),
\]
where \(m_t\) is the corresponding momentum coefficient.  Setting \(r_t=y_t\) in \cref{eq:master-estimate}, the distance from \(x_{t+1}\) to the query point simplifies to \(x_{t+1}-y_t=\theta_t(z_{t+1}-z_t)\), so
\[
    \frac{LA_t}{2}\norm{x_{t+1}-y_t}^2
    =
    \frac{LA_t\theta_t^2}{2}\norm{z_{t+1}-z_t}^2.
\]
For the parameter choice \(a_t=(t+1)/(128L)\), the coefficient \(LA_t\theta_t^2/2\) is at most \(\beta/2\), so each term in the positive sum is dominated by the corresponding term in the negative stability sum.  This cancellation is what gives the accelerated \(O(1/T^2)\) rate.

\emph{The delay problem and the compensated query.}
With a one-step delay, the gradient used at time \(t\) must have been queried \emph{before} the current iterate \(x_t\) is known (because the gradient oracle returns its result one step late).  Therefore, we cannot query at \(y_t\), which depends on \(x_t\).  The naive fallback of querying at the stale momentum point \(y_{t-1}\) introduces a mismatch \(y_t - y_{t-1}\) that contains a large inertial displacement proportional to \(x_t - x_{t-1}\); this displacement is \emph{not} proportional to \(\theta_t(z_{t+1}-z_t)\) and hence cannot be absorbed by the stability budget.

Our key observation is that a \emph{momentum-compensated} query point, defined for \(t\ge 1\) by
\[
    q_t:=x_{t-1}+m_{t-1}(1+m_t)(x_{t-1}-x_{t-2}),
    \qquad q_0:=x_0,
\]
resolves this problem.  The extra factor \((1+m_t)\) anticipates the inertial displacement that will appear when the next momentum point \(y_t\) is formed.  Concretely, it is chosen so that the identity
\[
    y_t-q_t=(1+m_t)(x_t-y_{t-1})
\]
holds, which rewrites the delay mismatch entirely in terms of the iterate displacement \(x_t - y_{t-1}\).  Since \(x_t - y_{t-1} = \theta_{t-1}(z_t - z_{t-1})\) by the primal-dual recursion, this displacement \emph{is} controlled by the stability budget.  Quantitatively, with the parameters of \cref{thm:main} (the one-step delayed accelerated rate proven in \cref{sec:one-step-delayed}),
\[
    \sum_{t=1}^{T-1} A_t\norm{y_t-q_t}^2
    \le
    12\sum_{t=0}^{T-2} A_t\norm{x_{t+1}-y_t}^2 .
\]
Since \(x_{t+1}-q_t=(x_{t+1}-y_t)+(y_t-q_t)\), the total cost with query point \(r_t=q_t\) in \cref{eq:master-estimate} is bounded by a constant multiple of \(\sum_t A_t\theta_t^2\norm{z_{t+1}-z_t}^2\), which is still absorbed by the negative stability term.

\emph{Restoring acceleration:} the compensated query makes the delayed scheme pay essentially the same smoothness cost as the non-delayed one, so the accelerated \(O(1/T^2)\) rate is preserved despite the one-step delay.  The full stochastic and biased-oracle analysis in \cref{thm:main} starts from the same estimate~\cref{eq:master-estimate} and adds error terms for stochastic noise \(g_t - \nabla f(q_t)\) and residual disagreement bias.

\paragraph{Accelerated gossip.}

The second mechanism is accelerated gossip, which reduces inter-node disagreement exponentially fast while preserving the network average.  The key observation is that, per coordinate, a single (lazy) gossip round $v \mapsto (1-\alpha_{\rm g})v + \alpha_{\rm g}\,vP$ is exactly a gradient step with stepsize $\alpha_{\rm g}$ on the quadratic disagreement objective $F(x) := \tfrac12 x(I-P)x^\top$, whose minimizers are the consensus vectors.  On the disagreement subspace $\{x : xJ = 0\}$ (where $J := \mathbf{1}\mathbf{1}^\top/M$), $F$ is $\rho$-strongly convex and $2$-smooth, so applying Nesterov's accelerated gradient method yields exponential contraction at the optimal rate for this condition number.  After $B$ accelerated gossip rounds the Frobenius-norm disagreement of the $d\times M$ network state $Y$ contracts as
\[
    \bigl\| Y^{(B)} - Y^{(B)}J \bigr\|_F
    \le
    \frac{2}{\sqrt\rho}
    \left(1-\sqrt{\rho/2}\right)^{B/2}
    \bigl\| Y^{(0)} - Y^{(0)}J \bigr\|_F,
\]
since right-multiplication by $P$ acts on each row independently and preserves row averages.  Solving for the number of rounds needed to reach disagreement $\varepsilon$ gives the communication complexity $B = O(\rho^{-1/2}\log(\|Y^{(0)} - Y^{(0)}J\|_F/(\varepsilon\sqrt\rho)))$.  This is the $\rho^{-1/2}$ scaling of accelerated gossip, compared with $\rho^{-1}$ for ordinary gossip; see \cref{app:nesterov-gossip} for the full derivation.  In \cref{thm:distributed-gossip}, $B$ is set to make the residual disagreement small enough for the delayed accelerated estimate to hold.

\paragraph{Putting the pieces together.}

We now combine the one-step-delayed accelerated estimate and the accelerated gossip contraction to sketch how the two-term bound in \cref{thm:distributed-gossip} arises.

\emph{Dual use of each macro-round.}
Each of the \(T\) macro steps uses the same block of \(B\) micro-rounds for two concurrent purposes: (i)~every worker evaluates \(B\) stochastic gradients at the already available delayed query point, forming a local minibatch of size~\(B\); and (ii)~accelerated gossip runs for \(B\) communication rounds on the current primal copies.  The total sample budget is therefore \(N = MBT\).

\emph{Reduction to the single-node problem.}
Since accelerated gossip preserves network averages exactly, the averaged iterates \(\bar x_t = (1/M)\sum_i x_{t,i}\) follow the one-step-delayed accelerated recursion from \cref{eq:master-estimate}, driven by the averaged oracle
\(
    \widehat G_t = \frac{1}{M}\sum_{i=1}^{M} g_{t,i}.
\)
Minibatching over the \(B\) gradient samples in each macro-round reduces the variance of \(\widehat G_t\) to \(\sigma^2/(MB)\).  The only remaining difference from a centralized gradient query at the network-average point \(\bar q_t\) is a deterministic bias caused by the nodes querying at slightly different local points \(q_{t,i}\).  By \(L\)-smoothness of each \(f_i\), this bias is bounded by
\[
    \bigl\|
        \widehat G_t - \nabla f(\bar q_t)
    \bigr\|
    =
    \biggl\|
        \frac{1}{M}\sum_{i=1}^{M}
        \bigl(\nabla f_i(q_{t,i}) - \nabla f_i(\bar q_t)\bigr)
    \biggr\|
    \le
    \frac{L}{\sqrt{M}}\norm{Q_t - \bar Q_t}_F,
\]
where \(Q_t - \bar Q_t\) measures the disagreement between the local query points and their average.  The proof therefore controls a feedback loop: query disagreement creates oracle bias, while local gradient discrepancies create future disagreement.  The full proof tracks these error terms jointly over the macro steps.  The accelerated gossip length \(B\) is chosen large enough that this feedback is contractive, so the accumulated network-induced bias fits within the inexact-oracle tolerance of \cref{thm:main}.

\emph{Deriving the convergence bound.}
Applying \cref{thm:main} to the averaged dynamics yields two terms.  The \emph{stochastic term} comes from the averaged-oracle variance \(\sigma^2/(MB)\); after \(T\) macro steps it gives
\[
    \frac{\sigma R}{\sqrt{T}} \cdot \frac{1}{\sqrt{MB}}
    =
    \frac{\sigma R}{\sqrt{MBT}}
    =
    \frac{\sigma R}{\sqrt{N}},
\]
recovering the centralized statistical rate.  The \emph{deterministic (network) term} comes from the accelerated \(O(1/T^2)\) optimization rate.  Since \(T = N/(MB)\) and \(B = \widetilde\Theta(\rho^{-1/2})\), this term becomes
\[
    \frac{LR^2}{T^2}
    =
    \frac{LR^2 M^2 B^2}{N^2}
    =
    \widetilde O\!\left(\frac{LR^2 M^2}{\rho\, N^2}\right).%
\]

\emph{Per-node guarantee.}
The above bound holds for the network-average iterate \(\bar x_T\); the per-node bound follows from the final accelerated-gossip block and smoothness.

\paragraph{What this sketch omits.}
The full proof in \cref{app:distributed-delayed-gossip} makes the following aspects precise, which are only outlined here: (i)~the stochastic error analysis, including the exact dependence on the minibatch variance and the interaction between stochastic noise and the delayed query; (ii)~the bias-budget calculation showing that the gossip contraction makes the deterministic bias terms in \cref{thm:main} sum to the claimed network-dependent rate; (iii)~the role of the heterogeneity parameter \(\zeta_\star\), which enters only through the logarithmic factor \(\Lambda_N\); and (iv)~the precise numerical constants.

\ifarxiv
\clearpage
\section*{Acknowledgements}

This project has received funding from the European Research Council (ERC) under the European
Union's Horizon 2020 research and innovation program (grant agreement No.\ 101078075). Views and
opinions expressed are however those of the author(s) only and do not necessarily reflect those of the
European Union or the European Research Council. Neither the European Union nor the granting
authority can be held responsible for them. This work received additional support from the Israel
Science Foundation (ISF, grant numbers 2549/19 and 3174/23), from the Council for Higher Education in Israel under a Moonshot Project, a grant from the Tel Aviv University Center for AI and Data Science (TAD), from the Len Blavatnik and the Blavatnik Family foundation, and a fellowship from the
Israeli Council for Higher Education.
\fi

\bibliographystyle{plainnat}
\bibliography{references}

\clearpage
\appendix
\crefalias{section}{appendix}

\ifarxiv\else
\section{Additional related work}
\label{app:additional-related-work}

\paragraph{Gossip algorithms and consensus.}
Gossip protocols have a long history in distributed computing, originating with the early work of \citet{tsitsiklis1986distributed} on distributed asynchronous gradient algorithms, which established that consensus-based gradient methods retain the convergence properties of their centralized counterparts under bounded communication delays.  Randomized gossip averaging was formalized by \citet{kempe2003gossip}, who showed that simple push-sum protocols compute aggregate statistics in $O(\log n)$ rounds on general networks, and by \citet{boyd2006randomized}, who characterized the averaging time of pairwise randomized gossip in terms of the second-largest eigenvalue of the gossip matrix and showed that optimal weight design reduces to a semidefinite program.  \citet{xiao2004fast} studied the fastest linear iterations for distributed averaging and demonstrated that optimally designed gossip matrices substantially outperform Laplacian-based heuristics.  This spectral-gap perspective, linking the mixing rate of the gossip matrix to the convergence of distributed algorithms, is central to the analysis of all subsequent gossip-based optimization methods, including ours.
\fi

\section{One-step-delayed accelerated algorithm}
\label{sec:one-step-delayed}

In this section we isolate the one-step-delayed acceleration component of our gossip method.  Through minibatching and accelerated gossip, the network average reduces to a single-node stochastic optimization problem with a delayed oracle, while residual disagreement enters as a controlled oracle bias.  We present an accelerated algorithm for this reduced problem and prove that momentum-compensated queries retain the usual stochastic accelerated rate.

\subsection{Problem setup}

We consider the convex stochastic optimization problem
\[
    \min_{x\in\R^d} f(x),
\]
where $f:\R^d\to\R$ is convex and differentiable.  We assume that $f$ has an $L$-Lipschitz gradient:
\[
    \norm{\nabla f(x)-\nabla f(y)}\le L\norm{x-y},
    \qquad x,y\in\R^d.
\]
Equivalently,
\[
    f(u)
    \le
    f(v)+\ip{\nabla f(v)}{u-v}+\frac L2\norm{u-v}^2.
\]
Let $x^\star\in\arg\min_{x\in\R^d} f(x)$, and assume that $R>0$ satisfies
\[
    R\ge \norm{x_0-x^\star}.
\]
If the minimizer is not unique, one may take $x^\star$ to be any minimizer and $R$ to be any known upper bound on its distance from $x_0$; the sharpest such choice is the distance from $x_0$ to the solution set.

\paragraph{Delayed stochastic oracle.}

At round $t$, the algorithm receives a stochastic gradient queried at the previous round.  If the point queried at the previous round is $q_t$, then the returned random vector is
\[
    G_t = G(q_t,\xi_t)=\nabla f(q_t)+\zeta_t.
\]
The stochastic error satisfies the RMS conditional-bias and centered-variance assumptions
\begin{equation}
\label{eq:noise-assumption}
    b_t:=\E[\zeta_t\mid \calF_t],
    \qquad
    \E\norm{b_t}^2 \le \epsilon_t^2,
    \qquad
    \E[\norm{\zeta_t-b_t}^2\mid \calF_t]\le \sigma^2,
\end{equation}
where $\epsilon_0,\ldots,\epsilon_{T-1}$ are deterministic nonnegative RMS bias bounds for the horizon under consideration, and $\calF_t$ contains all randomness and all iterates available before $G_t$ is revealed.  In particular, $q_t$ is $\calF_t$-measurable.  The unbiased model is recovered by taking $\epsilon_t=0$ for all $t$.
In the unconstrained setting we also assume that the oracle calls made by the algorithm are square-integrable:
\begin{equation}
\label{eq:square-integrable}
    \E\norm{G(q_t,\xi_t)}^2<\infty,
    \qquad 0\le t<T.
\end{equation}

\subsection{Algorithm}

The algorithm is written for prescribed weights $a_t$, averaging parameters $\theta_t$, momentum parameters $m_t$, and prox coefficient $\beta$.  The concrete parameter values used in the convergence theorem are stated in \cref{thm:main}.

At round $t$, the next query point $q_{t+1}$ is already computable from the pre-gradient state $(x_t,x_{t-1})$ and the known momentum coefficients; it does not depend on the delayed gradient $G_t$.  This is what makes the method compatible with a strict one-step delay.  The method then receives the stochastic gradient of the point submitted one round earlier, takes the dual gradient step $z_{t+1}=z_t-(a_t/\beta)G_t$, and forms the accelerated primal iterate $x_{t+1}$ by averaging $x_t$ with $z_{t+1}$.  This explicit recursion is equivalent to the cumulative prox/FTRL update $z_{t+1}=x_0-\beta^{-1}\sum_{i=0}^t a_iG_i$.

\begin{algorithm}[ht]
\caption{One-step delayed momentum-compensated accelerated stochastic approximation}
\label{alg:delayed-comp-acsa}
\begin{algorithmic}[1]
\STATE Input: $x_0\in\R^d$, horizon $T$, parameters $(a_t,\theta_t)_{t=0}^{T-1}$, $(m_t)_{t=0}^T$, and $\beta>0$.
\STATE Set $z_0=x_0$, $x_{-1}=x_0$, and $q_0=x_0$.
\STATE Submit the initial query at $q_0$.
\FOR{$t=0,1,\ldots,T-1$}
    \STATE Compute the next compensated query point
    \[
        q_{t+1}
        =
        x_t+m_t(1+m_{t+1})(x_t-x_{t-1}),
    \]
    with $m_0=0$.
    \STATE Submit the next-round query at $q_{t+1}$; receive the delayed stochastic gradient $G_t=G(q_t,\xi_t)$ from the previously submitted query at $q_t$.
    \STATE Take the dual gradient step
    \[
        z_{t+1}=z_t-\frac{a_t}{\beta}G_t.
    \]
    \STATE Compute the accelerated primal point
    \[
        x_{t+1}=(1-\theta_t)x_t+\theta_t z_{t+1}.
    \]
\ENDFOR
\STATE Output $x_T$.
\end{algorithmic}
\end{algorithm}

\paragraph{Compensated query points.}
For a Nesterov-type momentum sequence, the extrapolated point is
\[
    y_t=(1-\theta_t)x_t+\theta_t z_t
        = x_t+m_t(x_t-x_{t-1}).
\]
If the gradient at round $t$ must have been queried at round $t-1$, querying the naive stale point $y_{t-1}$ leads to the unstable delayed-Nesterov recurrence.  Instead, \cref{alg:delayed-comp-acsa} queries the one-step momentum-compensated point
\begin{equation}
\label{eq:comp-query}
    q_t
    :=
    x_{t-1}+m_{t-1}(1+m_t)(x_{t-1}-x_{t-2}),
    \qquad t\ge 1,
\end{equation}
with the convention $q_0=x_0$.

The extra term
\[
    m_{t-1}m_t(x_{t-1}-x_{t-2})
\]
is the one-step momentum compensation.  It anticipates the additional inertial displacement that will appear between $y_{t-1}$ and $y_t$.

\subsection{Main result}

\begin{theorem}[Optimal one-step delayed stochastic accelerated rate]
\label{thm:main}
Assume \cref{eq:noise-assumption,eq:square-integrable} hold.  Fix a horizon $T\ge 1$, and set
\[
    a_t = \frac{t+1}{128L},
    \qquad
    A_t = \sum_{i=0}^t a_i = \frac{(t+1)(t+2)}{256L},
    \qquad
    \theta_t = \frac{a_t}{A_t}=\frac{2}{t+2},
    \qquad 0\le t\le T.
\]
Set
\[
    m_0=0,
    \qquad
    m_t := \frac{\theta_t(1-\theta_{t-1})}{\theta_{t-1}}
    = \frac{t-1}{t+2},
    \qquad 1\le t\le T.
\]
Finally, define
\begin{equation}
\label{eq:beta-def}
    H_{T-1}:=\sqrt{\sum_{i=0}^{T-1} a_i^2},
    \qquad
    \beta
    :=
    1+\frac{\sigma}{R}H_{T-1}.
\end{equation}
Run \cref{alg:delayed-comp-acsa} with these parameters.  
Then
\begin{equation}
\label{eq:main-rate-explicit}
    \E[f(x_T)-f(x^\star)]
    \le
    \frac{6\sigma R}{\sqrt{T}}
    +
    \frac{128L R^2}{T^2}
    +
    \mathcal{E}_T,
\end{equation}
where 
\begin{equation*}
    \mathcal{E}_T :=
    2^{11} \left(R+\sqrt{\frac{\sigma R}{L}}\,T^{3/4}\right) T \Gamma_T
    +
    \frac{64}{L} T^4 \Gamma_T^2,
\end{equation*}
and $\Gamma_T$ is the weighted average RMS bias,
\begin{equation}
\label{eq:bias-budget}
    \Gamma_T:=
    \frac{1}{A_{T-1}}\sum_{t=0}^{T-1}a_t\epsilon_t.
\end{equation}
If $\epsilon_t=0$ for all $t$, then $\mathcal{E}_T=0$ and the last term in \cref{eq:main-rate-explicit} vanishes.
\end{theorem}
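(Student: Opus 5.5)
We prove \cref{thm:main} by a potential-based primal--dual argument that makes the sketch leading to \cref{eq:master-estimate} precise, and then add the stochastic and bias terms.

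\emph{Per-step inequality.} Fix $u=x^\star$ and write $G_t=\nabla f(q_t)+b_t+(\zeta_t-b_t)$. From the dual step $a_tG_t=\beta(z_t-z_{t+1})$ and the three-point identity,
\[
a_t\ip{G_t}{z_{t+1}-u}=\tfrac\beta2\bigl(\norm{z_t-u}^2-\norm{z_{t+1}-u}^2-\norm{z_{t+1}-z_t}^2\bigr).
\]
Smoothness at $q_t$ gives $f(x_{t+1})\le f(q_t)+\ip{\nabla f(q_t)}{x_{t+1}-q_t}+\frac L2\norm{x_{t+1}-q_t}^2$. We also use $A_tx_{t+1}=A_{t-1}x_t+a_tz_{t+1}$ and convexity at $q_t$ applied to $x_t$ and $u$. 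Combining these, we get
\[
A_tf(x_{t+1})-A_{t-1}f(x_t)-a_tf(u)\le a_t\ip{\nabla f(q_t)}{z_{t+1}-u}+\tfrac{LA_t}{2}\norm{x_{t+1}-q_t}^2 .
\]
We then replace $\nabla f(q_t)$ by $G_t$, which produces the error $-a_t\ip{b_t+\zeta_t-b_t}{z_{t+1}-u}$. Summing over $t$ and telescoping yields \cref{eq:master-estimate} plus error terms.

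\emph{Absorbing the delay.} The momentum identity $y_t=x_t+m_t(x_t-x_{t-1})$ holds with $m_t=\theta_t(1-\theta_{t-1})/\theta_{t-1}$. Together with \cref{eq:comp-query}, it gives $y_t-q_t=(1+m_t)(x_t-y_{t-1})=(1+m_t)\theta_{t-1}(z_t-z_{t-1})$. Since $x_{t+1}-y_t=\theta_t(z_{t+1}-z_t)$, we obtain
\[
\norm{x_{t+1}-q_t}^2\le 2\theta_t^2\norm{z_{t+1}-z_t}^2+8\theta_{t-1}^2\norm{z_t-z_{t-1}}^2 .
\]
Because $A_t\le 2A_{t-1}$ and $LA_t\theta_t^2=La_t^2/A_t\le 1/64$ for $a_t=(t+1)/(128L)$, the total smoothness cost is at most $\frac{c}{64}\sum_t\norm{z_{t+1}-z_t}^2$ with $c\le 40$. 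As $\beta\ge1$, this is at most half of the stability budget $\frac\beta2\sum_t\norm{z_{t+1}-z_t}^2$. We keep the remaining half of the budget for the noise.

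\emph{Noise and bias.} Write $z_{t+1}-u=(z_t-u)+(z_{t+1}-z_t)$. Since $z_t$ is $\calF_t$-measurable, the centered noise $\zeta_t-b_t$ paired with $z_t-u$ has zero mean. Its pairing with $z_{t+1}-z_t$ is handled by Young's inequality against the leftover $\frac\beta4\norm{z_{t+1}-z_t}^2$, giving at most $\sum_ta_t^2\sigma^2/\beta=H_{T-1}^2\sigma^2/\beta$. With $\beta=1+\sigma H_{T-1}/R$ and $\beta R^2/2$, dividing by $A_{T-1}\asymp T^2/L$ yields $128LR^2/T^2+6\sigma R/\sqrt T$, using $H_{T-1}\asymp T^{3/2}/L$. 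The bias term $a_t\ip{b_t}{z_{t+1}-u}$ is bounded by Cauchy--Schwarz as $a_t\epsilon_t\,(\E\norm{z_{t+1}-u}^2)^{1/2}$. This requires an a priori bound on $D_t:=(\E\norm{z_t-u}^2)^{1/2}$. We obtain it by keeping $-\frac\beta2\norm{z_{T'}-u}^2$ on the left of the telescoped inequality, since $f(x)-f(u)\ge0$, for every prefix $T'\le T$. This gives the recursion
\[
\tfrac\beta2D_{T'}^2\le\tfrac\beta2R^2+C\frac{\sigma^2H^2}{\beta}+\sum_{t<T'}a_t\epsilon_tD_{t+1}+\text{(noise)}.
\]
Here square-integrability \cref{eq:square-integrable} justifies the expectations. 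A standard discrete Gronwall-type lemma (in the style of Schmidt et al.) then bounds $\max_t D_t\lesssim R+\sqrt{\sigma R H/\beta}\cdots+\frac{1}{\beta}\sum a_t\epsilon_t$. Substituting back gives errors of order $(R+\sqrt{\sigma R/L}\,T^{3/4})\sum a_t\epsilon_t+\frac1\beta(\sum a_t\epsilon_t)^2$. Dividing by $A_{T-1}$ and using $\sum_ta_t\epsilon_t=A_{T-1}\Gamma_T$ and $A_{T-1}\le T^2/L$ produces the two terms of $\mathcal E_T$.

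\emph{Main obstacle.} We expect the main difficulty to be the bias step: bounding $D_t$ uniformly while the noise couples with $z_{t+1}$. The $\sqrt{\sigma R/L}\,T^{3/4}$ term shows that the distance can grow with the noise scale. We would first attempt the prefix-telescoping plus Gronwall route above and accept loose constants such as $2^{11}$.
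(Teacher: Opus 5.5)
Your proposal is correct. It follows the paper on the delay absorption, but it takes a genuinely different route to the a priori distance bound. The paper proves a pathwise estimate (\cref{lem:det-estimate}) via the fixed-prox FTRL inequality, which discards $\frac\beta2\norm{z_T-u}^2$. It applies Young's inequality to the whole error $\delta_t$, so the bias is paired with the $\calF_t$-measurable point $z_t$. It also keeps a residual movement term $3L\sum_tA_t\norm{x_{t+1}-y_t}^2$ on the left. Its distance bound (\cref{lem:expected-distance}) then controls $\norm{z_t-x^\star}$ pathwise by $R+2(t+1)\sum_{i<t}\norm{x_{i+1}-y_i}$, uses Cauchy--Schwarz against that movement term, and solves a quadratic inequality. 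This is where the extra factor $T$ and the $\sqrt{\sigma R/L}\,T^{3/4}$ term in $\mathcal E_T$ come from.

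You instead keep $\frac\beta2\norm{z_{T'}-x^\star}^2$ from the three-point identity on every prefix, spend the whole stability budget on delay plus centered noise, and pair the bias with $z_{t+1}$. Closing your recursion does not even need a Gronwall lemma. Set $D^\ast:=\max_{t\le T}D_t$, which is finite by square-integrability, and use $\sigma H_{T-1}/\beta\le R$. Then $(D^\ast)^2\le 3R^2+\frac2\beta A_{T-1}\Gamma_T D^\ast$, hence $D^\ast\le\sqrt3R+\frac2\beta A_{T-1}\Gamma_T$. The bias contribution is therefore at most $\sqrt3R\Gamma_T+\frac2\beta A_{T-1}\Gamma_T^2$. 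This is smaller than $\mathcal E_T$ by a factor of order $T$ in the linear term and has no $\sigma$-dependent cross term, so the stated bound follows a fortiori. The $\sqrt{\sigma RH/\beta}$ and $T^{3/4}$ factors you carry along are not actually produced by your own argument.

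What the paper's route buys is a reusable pathwise lemma and a simultaneous RMS bound on $q_t$, which the distributed proof later needs for $Q_T^{\rm rms}$. Your bound transfers to $q_t$ as well. Indeed, $x_t$ is a convex combination of $z_0,\dots,z_t$, and $q_t-x^\star=(1+c)(x_{t-1}-x^\star)-c(x_{t-2}-x^\star)$ with $c\in[0,2]$, giving an RMS bound of $5D^\ast$.

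Two arithmetic slips need fixing. First, $A_1=3A_0$, so $A_t\le 2A_{t-1}$ fails at $t=1$. Second, with $c\le 40$ the delay cost would not fit into half the budget, $\frac\beta4=\frac{16\beta}{64}$. Using $A_t\le 3A_{t-1}$ gives $c=1+12=13\le 16$, which is exactly the paper's $13L$-versus-$16L$ count.
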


\paragraph{Relation to inexact-oracle acceleration.}
The bias dependence in \cref{thm:main} is consistent with the inexact-oracle analysis of \citet*{devolder2014first}.  Their accelerated bounds for a scalar $(\delta,L)$-inexact oracle contain an accumulated error term of order $T\delta$.  In the deterministic case $\sigma=0$, the leading bias term above is
$
    R T \Gamma_T,
$
which has the same form after interpreting the scalar oracle error as $\delta\sim R\Gamma_T$.  The additional linear term involving
$
    \sqrt{{\sigma R}/{L}}\,T^{1+3/4} \Gamma_T
$
comes from stochastic spread of the iterates.  The final quadratic term
$
    (1/L) T^4 \Gamma_T^2
$
is not a feature of the scalar inexact-oracle model itself; it appears here because we control vector-valued bias on an unconstrained domain.  The proof must first bound the RMS size of $z_t-x^\star$ in terms of the accumulated bias, and this self-consistency step produces the quadratic drift contribution.  Thus the \citet{devolder2014first}-type $O(T R\Gamma_T)$ accumulation is the leading effect, while the quadratic term is the price of converting vector bias into scalar function error without an a priori radius bound.

\subsection{Proofs}

\subsubsection{Elementary identities}

The following identity is the algebraic reason for the compensation. Without the compensated query point in (cf.~\cref{eq:comp-query}), the stale point would be $y_{t-1}$, and the difference $y_t-y_{t-1}$ contains an uncontrolled inertial component.  The scalar quadratic test then produces an unstable limiting recurrence.  The compensated point removes precisely the hard inertial component.  \cref{lem:comp-identity} below shows that the remaining mismatch is proportional only to the previous actual update $x_t-y_{t-1}$, which is controlled by the negative stability terms in the accelerated estimate.

\begin{lemma}[One-step compensation identity]
\label{lem:comp-identity}
For the parameter choice in \cref{thm:main} and every $t\ge 1$, \cref{alg:delayed-comp-acsa} satisfies
\begin{equation}
\label{eq:comp-identity}
    y_t-q_t=(1+m_t)(x_t-y_{t-1}).
\end{equation}
Consequently,
\begin{equation}
\label{eq:comp-norm}
    \norm{y_t-q_t}^2
    \le 4\norm{x_t-y_{t-1}}^2.
\end{equation}
\end{lemma}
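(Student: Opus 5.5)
The plan is a direct algebraic verification, using only the definitions of $y_t$, $q_t$ and $m_t$. The only step needing any care is that the two expressions for the extrapolated point agree; the rest is expansion and regrouping. I will work with $y_t = x_t + m_t(x_t - x_{t-1})$ for all $t\ge 0$, using $x_{-1}=x_0$ and $m_0=0$.

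\emph{Consistency of $y_t$.} First I confirm that this matches $(1-\theta_t)x_t+\theta_t z_t$ for $t\ge1$, so the lemma agrees with the use of $y_t$ in the accelerated estimate. The primal update gives
\[
x_t - x_{t-1} = \theta_{t-1}(z_t - x_{t-1}),
\]
hence $z_t = x_{t-1} + \theta_{t-1}^{-1}(x_t - x_{t-1})$ and $z_t - x_t = (\theta_{t-1}^{-1}-1)(x_t-x_{t-1})$. Therefore
\[
\theta_t(z_t-x_t)=\frac{\theta_t(1-\theta_{t-1})}{\theta_{t-1}}(x_t-x_{t-1}) = m_t(x_t-x_{t-1}),
\]
by the definition of $m_t$ in \cref{thm:main}. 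At $t=0$ both forms equal $x_0$, since $z_0=x_0=x_{-1}$.

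\emph{The identity.} For $t\ge1$ I substitute \cref{eq:comp-query} and expand:
\[
y_t - q_t = (1+m_t)(x_t-x_{t-1}) - m_{t-1}(1+m_t)(x_{t-1}-x_{t-2}) = (1+m_t)\bigl(x_t - x_{t-1} - m_{t-1}(x_{t-1}-x_{t-2})\bigr).
\]
The bracket is exactly $x_t - y_{t-1}$, because $y_{t-1}=x_{t-1}+m_{t-1}(x_{t-1}-x_{t-2})$. This gives \cref{eq:comp-identity}.

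The boundary case $t=1$ is covered by the conventions $x_{-1}=x_0$ and $m_0=0$. Under these, $y_0=x_0$ and $q_1 = x_0 = q_0$, matching both the algorithm's formula for $q_{t+1}$ at $t=0$ and \cref{eq:comp-query}.

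\emph{The norm bound.} Since $m_t=(t-1)/(t+2)\in[0,1)$ for $t\ge1$, we have $(1+m_t)^2\le 4$. Squaring \cref{eq:comp-identity} then yields \cref{eq:comp-norm}. There is no genuine obstacle; the only point to watch is the index bookkeeping at $t=1$ and that the $m_t$ recursion makes the two definitions of $y_t$ coincide.
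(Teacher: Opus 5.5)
Your proposal is correct and follows the paper's proof essentially verbatim: expand $y_t-q_t$ using the definition of $q_t$, factor out $(1+m_t)$, recognize the bracket as $x_t-y_{t-1}$, and use $0\le m_t<1$ to get the factor $4$. Your extra verification that $(1-\theta_t)x_t+\theta_t z_t = x_t+m_t(x_t-x_{t-1})$ is correct; the paper only asserts this equivalence in the text before the lemma, and it is what connects the lemma to the $y_t$ used in the accelerated estimate.
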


\begin{proof}
Since
\[
    y_s=x_s+m_s(x_s-x_{s-1}),
\]
we have
\[
    x_t-y_{t-1}=x_t-x_{t-1}-m_{t-1}(x_{t-1}-x_{t-2}).
\]
Using the definition of $q_t$,
\begin{align*}
    y_t-q_t
    &=
    x_t+m_t(x_t-x_{t-1})
    -x_{t-1}-m_{t-1}(1+m_t)(x_{t-1}-x_{t-2})
    \\
    &=(1+m_t)(x_t-x_{t-1})
    -(1+m_t)m_{t-1}(x_{t-1}-x_{t-2})
    \\
    &=(1+m_t)(x_t-y_{t-1}).
\end{align*}
Since $0\le m_t<1$, $1+m_t\le 2$, giving \cref{eq:comp-norm}.
\end{proof}

\begin{lemma}[Weighted delay-energy bound]
\label{lem:weighted-delay}
For the parameter choice in \cref{thm:main} and every $T\ge 2$,
\begin{equation}
\label{eq:delay-energy}
    \sum_{t=1}^{T-1} A_t\norm{y_t-q_t}^2
    \le
    12\sum_{t=0}^{T-2} A_t\norm{x_{t+1}-y_t}^2.
\end{equation}
\end{lemma}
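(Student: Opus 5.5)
By \cref{lem:comp-identity}, each term satisfies $\norm{y_t-q_t}^2\le 4\norm{x_t-y_{t-1}}^2$. It therefore suffices to compare the weights $A_t$ and $A_{t-1}$ after an index shift. Concretely, I will write
\[
    \sum_{t=1}^{T-1} A_t\norm{y_t-q_t}^2
    \le
    4\sum_{t=1}^{T-1} A_t\norm{x_t-y_{t-1}}^2
    =
    4\sum_{t=0}^{T-2} A_{t+1}\norm{x_{t+1}-y_t}^2,
\]
and then bound the weight ratio $A_{t+1}/A_t$ uniformly in $t\ge 0$.

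With $A_t=(t+1)(t+2)/(256L)$, the ratio is $A_{t+1}/A_t=(t+3)/(t+1)$. This is decreasing in $t$ and equals $3$ at $t=0$, so $A_{t+1}\le 3A_t$ for every $t\ge 0$. Substituting gives
\[
    4\sum_{t=0}^{T-2}A_{t+1}\norm{x_{t+1}-y_t}^2\le 12\sum_{t=0}^{T-2}A_t\norm{x_{t+1}-y_t}^2,
\]
which is exactly \cref{eq:delay-energy}.

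The only points to check are the following. First, \cref{lem:comp-identity} applies for all $t\ge1$, including $t=1$: there $m_0=0$, so $q_1=x_0$ and $y_0=x_0$ by the conventions $x_{-1}=x_0$ and $z_0=x_0$. Second, the factor $(1+m_t)^2\le 4$ is valid because $0\le m_t=(t-1)/(t+2)<1$. Third, the index ranges match: $t=1,\dots,T-1$ maps to $t=0,\dots,T-2$, which is why the statement requires $T\ge2$ so that the sums are nonempty.

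There is no real obstacle here. The only mildly delicate step is the worst case of the ratio at $t=0$, which is what produces the constant $3$ and hence the overall $12$.
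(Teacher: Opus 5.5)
Your proposal is correct and follows the paper's proof: apply \cref{lem:comp-identity} to get the factor $4$, then bound the weight ratio by $3$ (you write it as $A_{t+1}/A_t=(t+3)/(t+1)\le 3$ for $t\ge0$, the paper as $A_t/A_{t-1}=(t+2)/t\le 3$ for $t\ge1$), and re-index to obtain the constant $12$.
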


\begin{proof}
By \cref{lem:comp-identity},
\[
    \norm{y_t-q_t}^2\le 4\norm{x_t-y_{t-1}}^2.
\]
For $t\ge 1$,
\[
    \frac{A_t}{A_{t-1} }
    =
    \frac{(t+1)(t+2)}{t(t+1)}
    =
    \frac{t+2}{t}
    \le 3.
\]
Therefore
\[
    A_t\norm{y_t-q_t}^2
    \le
    12 A_{t-1}\norm{x_t-y_{t-1}}^2.
\]
Summing over $t=1,\ldots,T-1$ and re-indexing gives \cref{eq:delay-energy}.
\end{proof}

\subsubsection{Deterministic accelerated estimate with delayed gradients}

We first state the deterministic estimate-sequence inequality that will be used inside the stochastic proof.  Its role is to show that the compensated stale query behaves like an ordinary accelerated gradient step with a larger smoothness constant.

\begin{lemma}[Compensated delayed accelerated estimate]
\label{lem:det-estimate}
Let $T\ge 1$.  Let $g_t$ be any sequence of vectors, and define
\[
    \delta_t:=g_t-\nabla f(q_t).
\]
Suppose the iterates are generated by \cref{alg:delayed-comp-acsa} with the parameter choice in \cref{thm:main}.  Then, for every $u\in\R^d$,
\begin{align}
\label{eq:main-pathwise}
    A_{T-1}\bigl(f(x_T)-f(u)\bigr)
    +3L\sum_{t=0}^{T-1}A_t\norm{x_{t+1}-y_t}^2
    &\le
    \frac{\beta}{2}\norm{u-x_0}^2
    +
    \sum_{t=0}^{T-1} a_t\ip{\delta_t}{u-z_t}
    +
    \sum_{t=0}^{T-1}\frac{a_t^2}{\beta}\norm{\delta_t}^2.
\end{align}
\end{lemma}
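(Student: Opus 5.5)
We prove \cref{lem:det-estimate} with the standard primal--dual estimate-sequence argument, evaluating smoothness and convexity at the delayed query $q_t$ rather than at $y_t$. The two facts that drive the argument are the identity $x_{t+1}-y_t=\theta_t(z_{t+1}-z_t)$, which holds because $y_t=(1-\theta_t)x_t+\theta_t z_t$, and the delay-energy bound of \cref{lem:weighted-delay}.

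\emph{Step 1: per-step inequality.} Fix $t$ and write $g_t=\nabla f(q_t)+\delta_t$. By $L$-smoothness at $q_t$,
\[
f(x_{t+1})\le f(q_t)+\ip{\nabla f(q_t)}{x_{t+1}-q_t}+\tfrac L2\norm{x_{t+1}-q_t}^2 .
\]
Since $x_{t+1}=(1-\theta_t)x_t+\theta_t z_{t+1}$, we can write $x_{t+1}-q_t=(1-\theta_t)(x_t-q_t)+\theta_t(z_{t+1}-q_t)$. Convexity then gives $f(q_t)+\ip{\nabla f(q_t)}{x_t-q_t}\le f(x_t)$ and $f(q_t)+\ip{\nabla f(q_t)}{u-q_t}\le f(u)$. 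Multiplying by $A_t$ and using $A_t\theta_t=a_t$ and $A_t(1-\theta_t)=A_{t-1}$ (with $A_{-1}=0$) yields
\[
A_tf(x_{t+1})\le A_{t-1}f(x_t)+a_tf(u)+a_t\ip{\nabla f(q_t)}{z_{t+1}-u}+\tfrac{LA_t}{2}\norm{x_{t+1}-q_t}^2 .
\]

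\emph{Step 2: dual step and three-point identity.} Substitute $\nabla f(q_t)=g_t-\delta_t$ and use $a_tg_t=\beta(z_t-z_{t+1})$. The three-point identity turns $a_t\ip{g_t}{z_{t+1}-u}$ into $\frac\beta2(\norm{z_t-u}^2-\norm{z_{t+1}-u}^2-\norm{z_{t+1}-z_t}^2)$. For the error term, split $-a_t\ip{\delta_t}{z_{t+1}-u}=a_t\ip{\delta_t}{u-z_t}+a_t\ip{\delta_t}{z_t-z_{t+1}}$ and apply Young's inequality to the second piece: $a_t\ip{\delta_t}{z_t-z_{t+1}}\le \frac{a_t^2}{\beta}\norm{\delta_t}^2+\frac\beta4\norm{z_{t+1}-z_t}^2$. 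This accounts for the $\frac{a_t^2}{\beta}\norm{\delta_t}^2$ term and leaves a stability budget of $-\frac\beta4\norm{z_{t+1}-z_t}^2$. Summing over $t=0,\dots,T-1$ telescopes the distance terms, using $z_0=x_0$, and gives
\[
A_{T-1}(f(x_T)-f(u))\le\tfrac\beta2\norm{u-x_0}^2+\textstyle\sum_t a_t\ip{\delta_t}{u-z_t}+\sum_t\tfrac{a_t^2}{\beta}\norm{\delta_t}^2-\tfrac\beta4\sum_t\norm{z_{t+1}-z_t}^2+\tfrac L2\sum_tA_t\norm{x_{t+1}-q_t}^2 .
\]

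\emph{Step 3: absorbing the delay (main obstacle).} Bound $\norm{x_{t+1}-q_t}^2\le 2\norm{x_{t+1}-y_t}^2+2\norm{y_t-q_t}^2$. The term at $t=0$ has $q_0=y_0=x_0$ and needs nothing further. For $t\ge1$, \cref{lem:weighted-delay} gives
\[
\tfrac L2\sum_tA_t\norm{x_{t+1}-q_t}^2\le 13L\sum_tA_t\norm{x_{t+1}-y_t}^2 .
\]
Adding $3L\sum_tA_t\norm{x_{t+1}-y_t}^2$ to both sides, it remains to show
\[
16L\sum_tA_t\theta_t^2\norm{z_{t+1}-z_t}^2\le\tfrac\beta4\sum_t\norm{z_{t+1}-z_t}^2 ,
\]
which is where the constant $128$ in $a_t$ is needed. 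We have $A_t\theta_t^2=a_t^2/A_t=\frac{2(t+1)}{128L(t+2)}\le\frac1{64L}$, so $16LA_t\theta_t^2\le\frac14\le\frac\beta4$ because $\beta\ge1$. This closes the inequality and proves \eqref{eq:main-pathwise}. The points requiring care are the index bookkeeping in \cref{lem:weighted-delay}, whose sum runs only up to $t=T-2$ and is therefore dominated by our sum up to $T-1$, and confirming that the $t=0$ term needs no delay correction.
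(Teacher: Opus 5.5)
Your proposal is correct and follows essentially the same route as the paper. The steps match: smoothness and convexity at $q_t$, then telescoping the dual step, then Young's inequality leaving a $-\frac{\beta}{4}\sum_t\norm{z_{t+1}-z_t}^2$ budget, and finally absorption of the delay via \cref{lem:weighted-delay} using $16LA_t\theta_t^2\le\frac14\le\frac\beta4$. The only cosmetic difference is that you telescope with the three-point identity, whereas the paper phrases that step as a fixed-prox FTRL inequality; in this unconstrained Euclidean setting both give the identical bound.
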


\begin{proof}
We give the proof in three steps.

\paragraph{Step 1: a smoothness inequality at the compensated point.}
Set $A_{-1}:=0$.  For each $t$, let
\[
    y_t=(1-\theta_t)x_t+\theta_t z_t,
    \qquad
    x_{t+1}=(1-\theta_t)x_t+\theta_t z_{t+1}.
\]
Smoothness at the query point $q_t$ gives
\[
    f(x_{t+1})
    \le
    f(q_t)+\ip{\nabla f(q_t)}{x_{t+1}-q_t}
    +\frac L2\norm{x_{t+1}-q_t}^2.
\]
By convexity, applied at $q_t$ and then averaged between $x_t$ and $u$,
\[
    (1-\theta_t)f(x_t)+\theta_t f(u)
    \ge
    f(q_t)+
    \ip{\nabla f(q_t)}{(1-\theta_t)x_t+\theta_t u-q_t}.
\]
Since $x_{t+1}=(1-\theta_t)x_t+\theta_t z_{t+1}$, the last two displays imply
\begin{align}
\label{eq:smooth-weighted}
    A_t f(x_{t+1})-A_{t-1}f(x_t)
    &\le
    a_t f(u)
    +a_t\ip{\nabla f(q_t)}{z_{t+1}-u}
    +\frac{L A_t}{2}\norm{x_{t+1}-q_t}^2.
\end{align}

\paragraph{Step 2: the fixed-prox FTRL inequality.}
Let $s_t:=a_tg_t$ and $S_t:=\sum_{i=0}^t s_i$.  For $t\ge 0$, define
\[
    h_t(z):=\ip{S_t}{z}+\frac{\beta}{2}\norm{z-x_0}^2,
    \qquad z\in\R^d,
\]
and set $h_{-1}(z):=\frac{\beta}{2}\norm{z-x_0}^2$.  The recursion $z_{t+1}=z_t-s_t/\beta$ and $z_0=x_0$ imply $z_{t+1}=x_0-S_t/\beta$.  Thus $z_{t+1}$ minimizes $h_t$ on $\R^d$, and $z_0=x_0$ minimizes $h_{-1}$ on $\R^d$.

Because each $h_t$ is $\beta$-strongly convex on $\R^d$,
\[
    h_{t-1}(z_t)+\frac{\beta}{2}\norm{z_{t+1}-z_t}^2
    \le
    h_{t-1}(z_{t+1}).
\]
Using $h_t(z_{t+1})-h_{t-1}(z_{t+1})=\ip{s_t}{z_{t+1}}$, we obtain
\[
    \ip{s_t}{z_{t+1}}
    \le
    h_t(z_{t+1})-h_{t-1}(z_t)
    -\frac{\beta}{2}\norm{z_{t+1}-z_t}^2.
\]
Summing over $t=0,\ldots,T-1$ telescopes:
\[
    \sum_{t=0}^{T-1}\ip{s_t}{z_{t+1}}
    \le
    h_{T-1}(z_T)-h_{-1}(z_0)
    -\frac{\beta}{2}\sum_{t=0}^{T-1}\norm{z_{t+1}-z_t}^2.
\]
Since $z_T$ minimizes $h_{T-1}$ and $h_{-1}(z_0)=0$, for every $u\in\R^d$,
\begin{align}
\label{eq:ftrl}
    \sum_{t=0}^{T-1}a_t\ip{g_t}{z_{t+1}-u}
    \le
    \frac{\beta}{2}\norm{u-x_0}^2
    -
    \frac{\beta}{2}\sum_{t=0}^{T-1}\norm{z_{t+1}-z_t}^2.
\end{align}

\paragraph{Step 3: summation and absorption of delay energy.}
Substitute $\nabla f(q_t)=g_t-\delta_t$ into \cref{eq:smooth-weighted}, sum over $t=0,\ldots,T-1$, and use \cref{eq:ftrl}.  This gives
\begin{align*}
    A_{T-1}\bigl(f(x_T)-f(u)\bigr)
    &\le
    \frac{\beta}{2}\norm{u-x_0}^2
    +\sum_{t=0}^{T-1}a_t\ip{\delta_t}{u-z_{t+1}}
    \\
    &\quad
    -\frac{\beta}{2}\sum_{t=0}^{T-1}\norm{z_{t+1}-z_t}^2
    +\frac L2\sum_{t=0}^{T-1}A_t\norm{x_{t+1}-q_t}^2.
\end{align*}
Decompose
\[
    \ip{\delta_t}{u-z_{t+1}}
    =
    \ip{\delta_t}{u-z_t}+\ip{\delta_t}{z_t-z_{t+1}}.
\]
Young's inequality with parameter $\beta/2$ gives
\[
    a_t\ip{\delta_t}{z_t-z_{t+1}}
    \le
    \frac{a_t^2}{\beta}\norm{\delta_t}^2
    +
    \frac{\beta}{4}\norm{z_{t+1}-z_t}^2.
\]
Thus only the following residual deterministic term remains to be controlled:
\[
    \frac L2\sum_{t=0}^{T-1}A_t\norm{x_{t+1}-q_t}^2
    -
    \frac{\beta}{4}\sum_{t=0}^{T-1}\norm{z_{t+1}-z_t}^2.
\]

Since $x_{t+1}-y_t=\theta_t(z_{t+1}-z_t)$ and $\beta\ge 1$,
\[
    \frac{\beta}{4}\norm{z_{t+1}-z_t}^2
    =
    \frac{\beta}{4\theta_t^2}\norm{x_{t+1}-y_t}^2
    \ge
    16L A_t\norm{x_{t+1}-y_t}^2,
\]
where the last inequality follows from
\[
    \frac{1}{\theta_t^2}
    =
    \frac{(t+2)^2}{4}
    \ge
    64L A_t
    =
    \frac{(t+1)(t+2)}{4}.
\]
Moreover, $q_0=y_0=x_0$ and
\[
    \norm{x_{t+1}-q_t}^2
    \le
    2\norm{x_{t+1}-y_t}^2+2\norm{y_t-q_t}^2.
\]
Using \cref{lem:weighted-delay} when $T\ge 2$ and the trivial empty-sum bound when $T=1$,
\[
    \frac L2\sum_{t=0}^{T-1}A_t\norm{x_{t+1}-q_t}^2
    \le
    13L\sum_{t=0}^{T-1}A_t\norm{x_{t+1}-y_t}^2.
\]
Because $16L-13L=3L$, the residual deterministic term is bounded above by
\[
    -3L\sum_{t=0}^{T-1}A_t\norm{x_{t+1}-y_t}^2.
\]
Moving this term to the left proves \cref{eq:main-pathwise}.
\end{proof}

\subsubsection{A crude RMS-distance bound}

\begin{lemma}[Uniform RMS distance of query and dual points]
\label{lem:expected-distance}
Under the assumptions and parameter choice of \cref{thm:main}, define
\[
    B_T^\epsilon
    :=
    1024(T+1)\left(
        R+\sqrt{\frac{\sigma R}{L}}\,T^{3/4}
        +(T+1)A_{T-1}\Gamma_T
    \right).
\]
Then, for every $0\le t\le T$,
\[
    \left(\E\norm{q_t-x^\star}^2\right)^{1/2}\le B_T^\epsilon,
    \qquad
    \left(\E\norm{z_t-x^\star}^2\right)^{1/2}\le B_T^\epsilon.
\]
\end{lemma}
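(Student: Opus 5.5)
We prove the bound for $z_t$ first and then transfer it to $q_t$. The starting point is the pathwise estimate of \cref{lem:det-estimate} with $u=x^\star$ and $g_t=G_t$, which in the unconstrained setting gives a recursion for the dual iterates. Rather than using that inequality, we work directly with the FTRL representation $z_{t+1}=x_0-\beta^{-1}\sum_{i\le t}a_iG_i$.

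The plan is to run the argument of \cref{eq:ftrl} with a general horizon $t\le T$ instead of $T$, and to keep the term $\frac{\beta}{2}\norm{z_{t}-x^\star}^2$ that drops out of the telescoping. Concretely, the strong convexity of $h_{t-1}$ at its minimizer $z_t$ gives $h_{t-1}(x^\star)\ge h_{t-1}(z_t)+\frac\beta2\norm{x^\star-z_t}^2$. Combining this with \cref{eq:smooth-weighted} and summing up to $t-1$ yields
\[
\tfrac\beta2\norm{z_t-x^\star}^2+A_{t-1}(f(x_t)-f(x^\star))\le \tfrac\beta2 R^2+\sum_{s<t}a_s\ip{\delta_s}{x^\star-z_s}+\sum_{s<t}\tfrac{a_s^2}{\beta}\norm{\delta_s}^2 .
\]
We drop the nonnegative function gap and split $\delta_s=(\zeta_s-b_s)+b_s$.

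Next we take expectations term by term. The martingale part has zero mean, since $z_s$ is $\calF_s$-measurable and square-integrable by \cref{eq:square-integrable}. The variance part contributes $\sum a_s^2\sigma^2/\beta\le \sigma^2H_{T-1}^2/\beta\le \sigma R H_{T-1}$, because $\beta\ge\sigma H_{T-1}/R$. The bias part is bounded by Cauchy--Schwarz as $\sum a_s\epsilon_s D_s$, where $D_s:=(\E\norm{z_s-x^\star}^2)^{1/2}$, and the squared bias contributes $\sum a_s^2\epsilon_s^2/\beta\le(\sum a_s\epsilon_s)^2$. Setting $D^\ast_t:=\max_{s\le t}D_s$, this gives the quadratic inequality
\[
\tfrac\beta2 (D^\ast_t)^2\le \tfrac\beta2R^2+\sigma RH_{T-1}+A_{T-1}\Gamma_T D^\ast_t+(A_{T-1}\Gamma_T)^2 .
\]
Solving it, and using $\beta\ge1$ and $\beta\ge \sigma H_{T-1}/R$, gives $D^\ast\lesssim R+\sqrt{\sigma RH_{T-1}}+A_{T-1}\Gamma_T$ up to $\beta$ factors. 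I expect this step to be the main obstacle. The difficulty is that $\beta$ can be large, yet it divides only some of the terms, and $H_{T-1}\asymp T^{3/2}/L$ must produce the $\sqrt{\sigma R/L}\,T^{3/4}$ term. I will handle it by dividing by $\beta/2$ and bounding $2\sigma RH_{T-1}/\beta\le 2R^2$ when $\beta$ is large. In the other regime I will use $\beta\ge1$ to get $\sigma R H_{T-1}\lesssim \sigma R T^{3/2}/L$. The slack factor $1024(T+1)$ in $B_T^\epsilon$ absorbs all the constants and any crude handling of $\beta$.

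For $q_t$ we use that the $x_t$ are convex combinations of $z_0,\dots,z_t$, since $x_{t+1}=(1-\theta_t)x_t+\theta_t z_{t+1}$ and $x_0=z_0$. Minkowski's inequality then gives $(\E\norm{x_t-x^\star}^2)^{1/2}\le D^\ast_t$. The query $q_t$ is a combination of $x_{t-1}$ and $x_{t-2}$ with coefficients of absolute value at most $1+2m_{t-1}(1+m_t)\le 5$. Hence $(\E\norm{q_t-x^\star}^2)^{1/2}\le 5D^\ast_T$, which lies well within the $(T+1)$ slack in $B_T^\epsilon$. The case $t=T$ only needs the sums extended by one index, which is allowed because $a_T$ and $\epsilon$ enter only through the already bounded quantities.
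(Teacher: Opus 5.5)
Your proof is correct, and it takes a different route from the paper's.

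\textbf{The paper's route.} The paper never bounds $\norm{z_t-x^\star}$ directly. It sets $d_t=x_{t+1}-y_t$ and uses the momentum recursion $v_{s+1}=m_sv_s+d_s$ to get pathwise bounds $\norm{q_t-x^\star}\le R+(t+1)\sum_{i<t}\norm{d_i}$ and $\norm{z_t-x^\star}\le R+2(t+1)\sum_{i<t}\norm{d_i}$. It controls $\sum_i\norm{d_i}$ by Cauchy--Schwarz against $\sum_i 1/A_i\le 256L$. It then bounds $\sum_tA_t\E\norm{d_t}^2$ through the movement term $3L\sum_tA_t\norm{x_{t+1}-y_t}^2$ of \cref{lem:det-estimate}, applied once at horizon $T$. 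The $(T+1)$ prefactors and the $\sqrt{\sigma R/L}\,T^{3/4}$ term in $B_T^\epsilon$ come from this detour.

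\textbf{Your route.} You keep the term $\frac\beta2\norm{z_t-x^\star}^2$ that the FTRL telescoping throws away; for the quadratic prox this step is an exact identity. You rerun Step~3 of \cref{lem:det-estimate} at every prefix horizon $t\le T$, which is legitimate because the delay-energy absorption and the Young step are termwise. You should state this explicitly, since your displayed inequality silently uses it. You then close a quadratic inequality in $\max_{s\le t}(\E\norm{z_s-x^\star}^2)^{1/2}$. Finally, convex combinations carry the bound to $x_t$, and the coefficient bound $1+2m_{t-1}(1+m_t)\le 5$ carries it to $q_t$.

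\textbf{What each buys.} The paper's route uses \cref{lem:det-estimate} as a black box, but it is lossy. Yours requires re-deriving that estimate with the distance term retained, and it is much sharper. Because $\sigma H_{T-1}/\beta\le R$ holds identically, your case split on $\beta$ is unnecessary. You directly get $(\E\norm{z_t-x^\star}^2)^{1/2}\le\sqrt{3}\,R+(1+\sqrt{3})A_{T-1}\Gamma_T$, and $(\E\norm{q_t-x^\star}^2)^{1/2}$ is at most five times that. This has no $T$ prefactor and no $\sigma$ term. Fed into the proof of \cref{thm:main}, it would shrink $\mathcal E_T$ to order $R\Gamma_T+A_{T-1}\Gamma_T^2$.

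\textbf{Minor point.} Your closing remark about extending sums by one index for $t=T$ is unnecessary. Both $z_T$ and $q_T$ involve only $a_s$ and $\epsilon_s$ with $s\le T-1$.
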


\begin{proof}
Let
\[
    d_t:=x_{t+1}-y_t.
\]
Set
\[
    S:=\sum_{t=0}^{T-1}A_t\E\norm{d_t}^2,
    \qquad
    D_t:=\sum_{i=0}^{t-1}\norm{d_i}.
\]
For $0\le s\le T$, set $v_s:=x_s-x_{s-1}$, with $v_0=0$.  For $0\le s\le T-1$, the identity
\[
    x_{s+1}=y_s+d_s=x_s+m_s(x_s-x_{s-1})+d_s
\]
implies
\[
    v_{s+1}=m_sv_s+d_s.
\]
Since $0\le m_s<1$, induction gives
\[
    \norm{v_s}\le \sum_{i=0}^{s-1}\norm{d_i} \le D_s,
    \qquad s\ge 1.
\]
Consequently, for $s\ge 0$,
\[
    \norm{x_s-x^\star}
    \le
    R+sD_s.
\]
For $t\ge 1$, using
\[
    q_t=x_{t-1}+m_{t-1}(1+m_t)v_{t-1}
\]
and $m_{t-1}(1+m_t)\le 2$ gives
\[
\begin{aligned}
    \norm{q_t-x^\star}
    &\le
    \norm{x_{t-1}-x^\star}+2\norm{v_{t-1}}
    \le
    R+(t+1)D_t.
\end{aligned}
\]
Also, for $t\ge 1$,
\[
    x_t=(1-\theta_{t-1})x_{t-1}+\theta_{t-1}z_t,
\]
so
\[
    z_t=x_{t-1}+\frac{v_t}{\theta_{t-1}}.
\]
Since $\theta_{t-1}=2/(t+1)$,
\[
    \norm{z_t-x^\star}
    \le
    \norm{x_{t-1}-x^\star}+\frac{t+1}{2}\norm{v_t}
    \le
    R+\left(t-1+\frac{t+1}{2}\right)D_t
    \le
    R+2(t+1)D_t.
\]
The same two bounds hold at $t=0$ because $q_0=z_0=x_0$.

By Cauchy-Schwarz inequality and the explicit formula for $A_i$,
\[
    D_t^2
    \le
    \left(\sum_{i=0}^{t-1}\frac1{A_i}\right)
    \left(\sum_{i=0}^{t-1}A_i\norm{d_i}^2\right),
\]
and
\[
    \sum_{i=0}^{t-1}\frac1{A_i}
    =
    256L\sum_{i=0}^{t-1}\frac1{(i+1)(i+2)}
    \le
    256L.
\]
Therefore
\[
    \left(\E D_T^2\right)^{1/2}
    \le
    16\sqrt{LS}.
\]

We next bound $S$.  Apply \cref{lem:det-estimate} with $u=x^\star$ and $\delta_t=\zeta_t$, take expectations, and drop the nonnegative term $A_{T-1}\E[f(x_T)-f(x^\star)]$, giving
\[
    3LS
    \le
    \frac{\beta R^2}{2}
    +
    \sum_{t=0}^{T-1}a_t\E\ip{\zeta_t}{x^\star-z_t}
    +
    \frac1\beta\sum_{t=0}^{T-1}a_t^2\E\norm{\zeta_t}^2.
\]
The square-integrability assumption and the recursion for $z_t$ imply that each $z_t$ is square-integrable.
Since $z_t$ is also $\calF_t$-measurable and $b_t=\E[\zeta_t\mid\calF_t]$, this gives
\[
    3LS
    \le
    \frac{\beta R^2}{2}
    +
    \frac{\sigma^2}{\beta}H_{T-1}^2
    +
    \frac1\beta\sum_{t=0}^{T-1}a_t^2\epsilon_t^2
    +
    \sum_{t=0}^{T-1}a_t\epsilon_t
    \left(\E\norm{z_t-x^\star}^2\right)^{1/2}.
\]
Here we used the orthogonality
$\E\ip{\zeta_t-b_t}{b_t}=0$, which follows from
$b_t=\E[\zeta_t\mid\calF_t]$, to get
\[
    \E\norm{\zeta_t}^2
    =
    \E\norm{\zeta_t-b_t}^2+\E\norm{b_t}^2
    \le \sigma^2+\epsilon_t^2
\]
and Cauchy-Schwarz inequality in expectation:
\[
    \E\ip{b_t}{x^\star-z_t}
    \le
    \left(\E\norm{b_t}^2\right)^{1/2}
    \left(\E\norm{z_t-x^\star}^2\right)^{1/2}.
\]
Using the pathwise bound on $z_t$ and the preceding estimate on $\left(\E D_T^2\right)^{1/2}$,
\[
    \sum_{t=0}^{T-1}a_t\epsilon_t
    \left(\E\norm{z_t-x^\star}^2\right)^{1/2}
    \le
    A_{T-1}\Gamma_T\left(R+32(T+1)\sqrt{LS}\right).
\]
Also,
\[
    \frac1\beta\sum_{t=0}^{T-1}a_t^2\epsilon_t^2
    \le
    \left(\sum_{t=0}^{T-1}a_t\epsilon_t\right)^2
    =
    A_{T-1}^2\Gamma_T^2.
\]
Moreover,
\[
    \frac{\beta R^2}{2}
    +
    \frac{\sigma^2}{\beta}H_{T-1}^2
    \le
    \frac{R^2+3\sigma R H_{T-1}}{2},
\]
because $\beta=1+\sigma H_{T-1}/R$ and $\sigma H_{T-1}/\beta\le R$.  With $Y:=\sqrt{LS}$, we have
\[
    3Y^2
    \le
    \frac{R^2+3\sigma R H_{T-1}}{2}
    +A_{T-1}^2\Gamma_T^2
    +RA_{T-1}\Gamma_T
    +32(T+1)A_{T-1}\Gamma_TY.
\]
The quadratic inequality yields
\[
    Y
    \le
    \frac{32}{3}(T+1)A_{T-1}\Gamma_T
    +
    \sqrt{\frac{R^2+3\sigma R H_{T-1}+2RA_{T-1}\Gamma_T+2A_{T-1}^2\Gamma_T^2}{6}}.
\]
We simplify this bound using
\[
    \sqrt{x_1+\cdots+x_4}\le \sqrt{x_1}+\cdots+\sqrt{x_4},
    \qquad
    \frac{32}{3}\le 16,
    \qquad
    H_{T-1}\le \frac{T^{3/2}}{64\sqrt3L},
\]
and
\[
    \sqrt{RA_{T-1}\Gamma_T}
    \le R+(T+1)A_{T-1}\Gamma_T.
\]
We also use $A_{T-1}\Gamma_T\le (T+1)A_{T-1}\Gamma_T$.
This gives
\[
    Y
    \le
    20\left(
        R+\sqrt{\frac{\sigma R}{L}}\,T^{3/4}
        +(T+1)A_{T-1}\Gamma_T
    \right).
\]
Combining this with the bounds
\[
    \left(\E\norm{q_t-x^\star}^2\right)^{1/2}\le R+16(T+1)Y,
    \qquad
    \left(\E\norm{z_t-x^\star}^2\right)^{1/2}\le R+32(T+1)Y
\]
and using $T\ge 1$ proves the lemma by the definition of $B_T^\epsilon$.
\end{proof}

\subsubsection{Proof of \texorpdfstring{\cref{thm:main}}{Theorem 3}}

\begin{proof}[\unskip\nopunct]%
Apply \cref{lem:det-estimate} with $u=x^\star$ and $\delta_t=\zeta_t$, and drop the nonnegative movement term.  We obtain
\begin{align*}
    A_{T-1}\bigl(f(x_T)-f(x^\star)\bigr)
    &\le
    \frac{\beta}{2}\norm{x^\star-x_0}^2
    +
    \sum_{t=0}^{T-1}a_t\ip{\zeta_t}{x^\star-z_t}
    +
    \sum_{t=0}^{T-1}\frac{a_t^2}{\beta}\norm{\zeta_t}^2.
\end{align*}
The square-integrability assumption implies by induction that each $z_t=x_0-\beta^{-1}\sum_{i=0}^{t-1}a_iG_i$ is square-integrable.  Hence the inner products below are integrable.  Because $z_t$ is $\calF_t$-measurable and $b_t=\E[\zeta_t\mid\calF_t]$,
\[
    \E\left[a_t\ip{\zeta_t}{x^\star-z_t}\right]
    =
    \E\left[a_t\ip{b_t}{x^\star-z_t}\right]
    \le
    a_t\epsilon_t\left(\E\norm{z_t-x^\star}^2\right)^{1/2}.
\]
By \cref{lem:expected-distance},
\[
    \sum_{t=0}^{T-1}
    a_t\epsilon_t\left(\E\norm{z_t-x^\star}^2\right)^{1/2}
    \le
    A_{T-1}\Gamma_TB_T^\epsilon.
\]
Also, since $b_t=\E[\zeta_t\mid\calF_t]$, the same orthogonality gives
\[
    \E\norm{\zeta_t}^2
    =
    \E\norm{\zeta_t-b_t}^2+\E\norm{b_t}^2
    \le
    \sigma^2+\epsilon_t^2.
\]
Therefore
\begin{equation}
\label{eq:pre-rate}
    \E[f(x_T)-f(x^\star)]
    \le
    \frac{\beta R^2}{2A_{T-1}}
    +
    \frac{\sigma^2}{A_{T-1}\beta}
      \sum_{t=0}^{T-1}a_t^2
    +
    \frac{1}{A_{T-1}\beta}
      \sum_{t=0}^{T-1}a_t^2\epsilon_t^2
    +
    \Gamma_TB_T^\epsilon.
\end{equation}
By definition,
\[
    \beta=1+\frac{\sigma}{R}H_{T-1}.
\]
Thus
\[
    \frac{\sigma^2}{\beta}\sum_{t=0}^{T-1}a_t^2
    =
    \frac{\sigma^2}{\beta}H_{T-1}^2
    \le
    \sigma R H_{T-1},
\]
because $\sigma H_{T-1}/\beta\le R$.
Also,
\[
    \frac{1}{\beta}\sum_{t=0}^{T-1}a_t^2\epsilon_t^2
    \le
    \left(\sum_{t=0}^{T-1}a_t\epsilon_t\right)^2
    =
    A_{T-1}^2\Gamma_T^2.
\]
Substituting into \cref{eq:pre-rate} gives
\[
    \E[f(x_T)-f(x^\star)]
    \le
    \frac{R^2}{2A_{T-1}}
    +
    \frac{\sigma R H_{T-1}}{2A_{T-1}}
    +
    \frac{\sigma R H_{T-1}}{A_{T-1}}
    +
    A_{T-1}\Gamma_T^2
    +
    \Gamma_TB_T^\epsilon.
\]
Hence
\begin{equation}
\label{eq:rate-H}
    \E[f(x_T)-f(x^\star)]
    \le
    \frac{R^2}{2A_{T-1}}
    +
    \frac{3\sigma R H_{T-1}}{2A_{T-1}}
    +
    A_{T-1}\Gamma_T^2
    +
    \Gamma_TB_T^\epsilon.
\end{equation}
Now compute
\[
    A_{T-1}=\frac{T(T+1)}{256L}
    \ge
    \frac{T^2}{256L},
\]
and
\[
    H_{T-1}^2
    =
    \sum_{t=0}^{T-1}\frac{(t+1)^2}{16384L^2}
    =
    \frac{T(T+1)(2T+1)}{98304L^2}
    \le
    \frac{T^3}{12288L^2}.
\]
Therefore
\[
    \frac{R^2}{2A_{T-1}}
    =
    \frac{128L R^2}{T(T+1)},
\]
and
\[
    \frac{3\sigma R H_{T-1}}{2A_{T-1}}
    \le
    \frac{3\sigma R}{2}\cdot
    \frac{T^{3/2}/(64\sqrt 3L)}{T(T+1)/(256L)}
    \le
    \frac{6\sigma R}{\sqrt{3T}}
    .
\]
Finally, by the definitions of $B_T^\epsilon$ and $A_{T-1}$,
\begin{align*}
    A_{T-1}\Gamma_T^2+\Gamma_TB_T^\epsilon
    &=
    2^{10}(T+1)\Gamma_T
    \left(
        R+\sqrt{\frac{\sigma R}{L}}\,T^{3/4}
        +(T+1)A_{T-1}\Gamma_T
    \right)
    +
    A_{T-1}\Gamma_T^2
    \\
    &=
    2^{10}(T+1)\Gamma_T
    \left(R+\sqrt{\frac{\sigma R}{L}}\,T^{3/4}\right)
    +
    \left(\frac{4T(T+1)^3}{L}+\frac{T(T+1)}{256L}\right)\Gamma_T^2
    \\
    &\le
    2^{11}T\Gamma_T
    \left(R+\sqrt{\frac{\sigma R}{L}}\,T^{3/4}\right)
    +
    \frac{64}{L}T^4\Gamma_T^2.
\end{align*}
Combining these estimates proves \cref{eq:main-rate-explicit}.
\end{proof}

\section{Accelerated gossip via Nesterov acceleration}
\label{app:nesterov-gossip}

This appendix records the accelerated communication primitive used in the distributed
algorithm.  
The standard way to accelerate gossip is to apply a Chebyshev
polynomial in the gossip matrix, which improves the communication round complexity
from \(\rho^{-1}\) to \(\rho^{-1/2}\). Here we give, for completeness, an alternative derivation 
from an optimization viewpoint: the accelerated version below
applies Nesterov's method to a quadratic disagreement objective, giving the
same \(\rho^{-1/2}\) dependence while preserving the network average exactly.
We present the argument for scalar local variables; matrix gossip in the
main algorithm follows by applying the same recursion row by row.

\paragraph{Setup.}
Let \(P\in\mathbb{R}^{n\times n}\) be a symmetric doubly-stochastic gossip
matrix on a connected graph.  In this section each node holds a scalar, so the
global state is a row vector \(x\in\mathbb R^{1\times n}\), identified with
\(\mathbb R^n\).  Gossip acts by right multiplication, matching the convention
used in the main algorithm.  When the main algorithm gossips a matrix of
vector-valued local copies, the vector result below is applied independently to
each row, giving the corresponding Frobenius-norm contraction.  Let
\[
    J:=\frac1n\mathbf{1}\mathbf{1}^\top .
\]
Thus, for an initial row vector \(\xi\in\mathbb R^{1\times n}\), the vector
\(\xi J\) has all coordinates equal to the average of the coordinates of \(\xi\).
Write the eigenvalues of \(P\) as
\(1=\lambda_1(P)>\lambda_2(P)\geq \cdots \geq \lambda_n(P)\), and assume that the
spectral gap satisfies \(1-\lambda_2(P) \geq \rho\).  Consider the
quadratic disagreement objective
\[
    F(x):=\frac12 x(I-P)x^\top.
\]
The minimizers of \(F\) are exactly the consensus vectors \(x=xJ\).  On the
row-disagreement subspace \(\{e:eJ=0\}\), the matrix \(I-P\) has eigenvalues
\(1-\lambda_i(P)\), \(i=2,\dots,n\), and hence \(F\) is \(\rho\)-strongly
convex and \(2\)-smooth with respect to the Euclidean norm.

\paragraph{Algorithm.}

Applying Nesterov's accelerated gradient method to the quadratic disagreement
objective \(F(x)=\frac12x(I-P)x^\top\) on the row-disagreement subspace gives
the following algorithm.

\begin{algorithm}[H]
\caption{Nesterov-accelerated gossip}
\label{alg:nag-gossip}
\begin{algorithmic}[1]
\STATE {\bf input:} Symmetric doubly-stochastic \(P\), initial vector \(\xi\), stepsize \(\eta>0\), and momentum \(\beta\ge 0\).
\STATE Initialize \(x_{-1}=x_0=\xi\).
\FOR{\(t=0,1,2,\dots\)}
    \STATE \(y_t = x_t+\beta(x_t-x_{t-1})\).
    \STATE \(x_{t+1}=y_t-\eta y_t(I-P)=(1-\eta)y_t+\eta y_tP\).
\ENDFOR
\end{algorithmic}
\end{algorithm}

Equivalently, each node performs
\[
    y_{i,t}=x_{i,t}+\beta(x_{i,t}-x_{i,t-1}),
    \qquad
    x_{i,t+1}
    =
    (1-\eta)y_{i,t}
    +
    \eta\sum_j P_{ji}y_{j,t}.
\]
Since \(P\) is symmetric, this is the usual weighted neighbor average.

Unlike classical Chebyshev gossip, which explicitly constructs the minimax
degree-\(t\) polynomial on the disagreement spectrum, the recursion above arises
from Nesterov acceleration on the quadratic \(F(x)=\frac12x(I-P)x^\top\).  The
resulting polynomial need not have Chebyshev-optimal constants, but it gives the
same \(O(\rho^{-1/2}\log(1/\varepsilon))\) communication dependence.

\paragraph{Convergence analysis.}

By directly utilizing standard convergence bounds for Nesterov's method in the strongly convex and smooth case, one can obtain the following.

\begin{theorem}[Accelerated gossip convergence]
\label{thm:nesterov-gossip-convergence}
Let \(P\) be symmetric and doubly stochastic, and suppose that the
non-consensus eigenvalues of \(I-P\) lie in \([\rho,2]\) for some
\(\rho\in(0,1]\). Run \cref{alg:nag-gossip} with
\[
    \eta=\frac12,
    \qquad
    \beta=\frac{1-\sqrt{\rho/2}}{1+\sqrt{\rho/2}}.
\]
Then, for every
\(t\geq 0\) it holds that 
$
    x_tJ=\xi J,
$
and
\[
    \|x_t-\xi J\|_2
    \leq
    \frac{2}{\sqrt\rho}
    \left(1-\sqrt{\rho/2}\right)^{t/2}
    \|\xi-\xi J\|_2
    .
\]
Consequently, \cref{alg:nag-gossip} reaches
\(\|x_t-\xi J\|_2\leq \varepsilon\) with iteration complexity
\[
    O\!\left(
        \rho^{-1/2}
        \log\!\left(
            \frac{\|\xi-\xi J\|_2}{\varepsilon\sqrt\rho}
        \right)
    \right).
\]
\end{theorem}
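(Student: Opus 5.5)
The plan is to reduce \cref{alg:nag-gossip} to a scalar two-term linear recurrence on each eigencomponent of \(P\), and then bound the spectral radius and transient growth of the resulting \(2\times2\) companion matrix uniformly over the non-consensus spectrum.

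\emph{Average preservation.} Since \(P\) is doubly stochastic, \(PJ=J\). Hence \(x_{t+1}J=(1-\eta)y_tJ+\eta y_tPJ=y_tJ\). Also \(y_tJ=(1+\beta)x_tJ-\beta x_{t-1}J\), and by induction from \(x_{-1}=x_0=\xi\) this gives \(x_tJ=\xi J\) for every \(t\).

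\emph{Reduction to the disagreement subspace.} Set \(e_t:=x_t-\xi J\). Then \(e_tJ=0\), and \(e_t\) obeys the same linear recursion: \(e_{t+1}=w_t(I-\eta(I-P))\) with \(w_t=e_t+\beta(e_t-e_{t-1})\), because consensus vectors are fixed by \(P\). Since \(P\) is symmetric, expand \(e_t\) in an orthonormal eigenbasis \(\{u_k\}_{k\ge2}\) of \(P\) restricted to \(\mathbf 1^\perp\). Each coefficient \(c_t\) then satisfies the scalar recurrence
\[
c_{t+1}=(1-\eta\mu)\bigl((1+\beta)c_t-\beta c_{t-1}\bigr),\qquad c_{-1}=c_0,
\]
where \(\mu=1-\lambda_k\in[\rho,2]\). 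This is exactly heavy-ball/Nesterov gradient descent on the one-dimensional quadratic \(\tfrac\mu2c^2\), with smoothness constant \(2\) and strong convexity constant \(\rho\). By orthogonality it suffices to prove \(|c_t|\le \frac{2}{\sqrt\rho}(1-\sqrt{\rho/2})^{t/2}|c_0|\) uniformly over \(\mu\in[\rho,2]\).

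\emph{Main step: the scalar bound.} I would invoke the standard Nesterov estimate-sequence/Lyapunov bound for \(\kappa\)-conditioned functions, here with \(\kappa=2/\rho\), stepsize \(1/L_F=1/2\), and momentum \((1-\sqrt{1/\kappa})/(1+\sqrt{1/\kappa})\), which are exactly the stated choices. Applied to \(F\) restricted to \(\mathbf 1^\perp\), it gives
\[
F(x_t)-F^\star+\tfrac{\rho}{2}\|\cdot\|^2\text{-potential}\le (1-\sqrt{\rho/2})^t\bigl(F(x_0)-F^\star+\tfrac\rho2\|e_0\|^2\bigr).
\]
We have \(F(x_0)\le\|e_0\|^2\), and strong convexity gives \(\tfrac\rho2\|e_t\|^2\le F(x_t)-F^\star\). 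Combining these, \(\|e_t\|^2\le \frac{2}{\rho}\cdot\frac{2+\rho}{2}(1-\sqrt{\rho/2})^t\|e_0\|^2\le\frac{3}{\rho}(1-\sqrt{\rho/2})^t\|e_0\|^2\), which yields the constant \(2/\sqrt\rho\) after taking square roots.

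A direct alternative is to check that the companion matrix's eigenvalues have modulus at most \(\sqrt{\beta(1-\eta\mu)}\le(1-\sqrt{\rho/2})^{1/2}\) when they are complex, and to handle the near-critical case \(\mu\approx\rho\) separately. That route risks a Jordan-block factor of order \(t\), which is why I prefer the Lyapunov route. The main obstacle is confirming that the standard Nesterov guarantee applies verbatim with these parameters and initialization \(x_{-1}=x_0\). If it does not, I would fall back on the explicit potential \(\Phi_t=F(x_t)-F^\star+\frac\rho2\|v_t-\xi J\|^2\) with \(v_t=x_t+\sqrt{\kappa}(x_t-x_{t-1})\), and verify the one-step contraction by the usual computation.

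\emph{Iteration complexity.} To get \(\|e_t\|\le\varepsilon\), solve \(\frac{2}{\sqrt\rho}(1-\sqrt{\rho/2})^{t/2}\|e_0\|\le\varepsilon\). Using \(\log(1/(1-s))\ge s\), this holds once \(t\ge 2\sqrt{2/\rho}\,\log\bigl(2\|\xi-\xi J\|/(\varepsilon\sqrt\rho)\bigr)\), which is the stated complexity.
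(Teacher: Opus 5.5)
Your proposal is correct and follows essentially the same route as the paper: show the average is preserved, observe that $e_t=x_t-\xi J$ runs Nesterov's constant-momentum method on $F(e)=\frac12 e(I-P)e^\top$ (which is $\rho$-strongly convex and $2$-smooth on the disagreement subspace), invoke the standard bound $F(e_t)\le\frac{2+\rho}{2}(1-\sqrt{\rho/2})^t\|e_0\|_2^2$, and convert it to a distance bound via $F(e_t)\ge\frac{\rho}{2}\|e_t\|_2^2$. The per-eigenmode decomposition and the Lyapunov fallback are unnecessary: the initialization $x_{-1}=x_0$ is exactly the $y_0=x_0$ start of Nesterov's constant-step scheme, so the standard guarantee applies verbatim.
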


\begin{proof}
First, the average is invariant. Since \(P\) is doubly stochastic,
\((I-P)J=0\). Therefore \(x_{t+1}J=y_tJ\). Also,
\[
    y_tJ=x_tJ+\beta(x_tJ-x_{t-1}J).
\]
Since \(x_{-1}=x_0=\xi\), induction gives \(x_tJ=\xi J\) for all \(t\).

Define the disagreement vector \(e_t:=x_t-\xi J\). The previous paragraph shows
that \(e_tJ=0\) for all \(t\). Moreover, since \(J(I-P)=0\), we have
\(\xi J(I-P)=0\). Subtracting \(\xi J\) from the update gives
\[
    e_{t+1}
    =
    \widetilde y_t-\frac12\widetilde y_t (I-P),
    \qquad
    \widetilde y_t
    =
    e_t+\beta(e_t-e_{t-1}).
\]
Thus the disagreement dynamics are exactly Nesterov's accelerated gradient
method applied to the quadratic \(F(e):=\frac12 e(I-P)e^\top\) on the
row-disagreement subspace.

On this subspace, the eigenvalues of \(I-P\) lie in \([\rho,2]\).
Hence \(F\) is \(\rho\)-strongly convex and \(2\)-smooth with respect
to the Euclidean norm. By the standard finite-time
guarantee for Nesterov's accelerated gradient method on a
\(\mu\)-strongly convex and \(L\)-smooth objective, with
\(\eta=1/L\) and
\(\beta=(1-\sqrt{\mu/L})/(1+\sqrt{\mu/L})\),
\[
    F(e_t)-F(e^\star)
    \leq
    \frac{L+\mu}{2} \left(1-\sqrt{\frac{\mu}{L}}\right)^t \|e_0-e^\star\|_2^2
    .
\]
Applying this bound with \(\mu=\rho\), \(L=2\),
\(e^\star=0\), and \(e_0=\xi-\xi J\), we obtain
\[
    F(e_t)
    \leq
    2 \left(1-\sqrt{\rho/2}\right)^t \|\xi-\xi J\|_2^2.
\]
On the other hand, strong convexity on the row-disagreement subspace gives
\(F(e_t)\geq \frac{\rho}{2}\|e_t\|_2^2\). Combining the two inequalities
yields
\[
    \|x_t-\xi J\|_2
    \leq
    \frac{2}{\sqrt\rho}
    \left(1-\sqrt{\rho/2}\right)^{t/2}
    \|\xi-\xi J\|_2
    .
\]
The iteration-complexity statement follows by taking logarithms and using
\((1-a)^k\le \exp(-ak)\) for \(a\in(0,1)\).
\end{proof}

\section{Proofs for Decentralized Doubly Accelerated SGD (\texorpdfstring{\dadq}{DDA-SGD})}
\label{app:distributed-delayed-gossip}

For the analysis, define the network averages
\[
    \bar x_t:=\frac1M X_t\mathbf 1,
    \qquad
    \bar z_t:=\frac1M Z_t\mathbf 1,
    \qquad
    \bar q_t:=\frac1M Q_t\mathbf 1,
\]
and the averaged stochastic oracle
\[
    \widehat G_t:=\frac1M\sum_{i=1}^M g_{t,i}.
\]
We write
\[
    \zeta_t:=\widehat G_t-\nabla f(\bar q_t),
    \qquad
    b_t:=\E[\zeta_t\mid\calF_t].
\]

Before proving the main convergence result, we give some preliminary lemmas.
First we give a bound on the heterogeneity of the local gradients away from the optimum, which is used to control the bias of the averaged oracle induced by delayed gossip.

\begin{lemma}[Heterogeneity away from the optimum]
\label{lem:heterogeneity-away}
Under \cref{eq:bounded-heterogeneity}, every square-integrable random vector $X$ satisfies
\[
    \left(
        \E\left[
            \frac1M\sum_{i=1}^M
            \norm{\nabla f_i(X)-\nabla f(X)}^2
        \right]
    \right)^{1/2}
    \le
    \zeta_\star+2L\left(\E\norm{X-x^\star}^2\right)^{1/2}.
\]
\end{lemma}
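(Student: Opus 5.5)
The plan is a pointwise triangle inequality followed by Minkowski's inequality in $L^2$. Fix a realization of $X$ and write
\[
    \nabla f_i(X)-\nabla f(X)
    =
    \bigl(\nabla f_i(x^\star)-\nabla f(x^\star)\bigr)
    +
    \bigl(\nabla f_i(X)-\nabla f_i(x^\star)\bigr)
    -
    \bigl(\nabla f(X)-\nabla f(x^\star)\bigr).
\]
Since each $f_i$ is $L$-smooth, so is the average $f$. Hence the last two differences have norm at most $L\norm{X-x^\star}$ each, and their combined norm is at most $2L\norm{X-x^\star}$.

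Next, view the quantity inside the square root as the squared norm of a vector in the Hilbert space $L^2(\Omega;(\R^d)^M)$ with inner product $\E[\frac1M\sum_i\ip{\cdot}{\cdot}]$. Its norm is
\[
    \Bigl(\E\bigl[\tfrac1M\textstyle\sum_i\norm{v_i}^2\bigr]\Bigr)^{1/2}.
\]
Minkowski's (triangle) inequality in this space splits the left-hand side into two norms.

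\textbf{The deterministic heterogeneity term.} This is the norm of $(\nabla f_i(x^\star)-\nabla f(x^\star))_i$. It is constant in $\omega$, so by \cref{eq:bounded-heterogeneity} its norm is at most $\zeta_\star$.

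\textbf{The remainder term.} Its $i$-th component is pointwise bounded in norm by $2L\norm{X-x^\star}$. The average over $i$ therefore gives
\[
    \tfrac1M\textstyle\sum_i\norm{\cdot}^2\le 4L^2\norm{X-x^\star}^2,
\]
and taking expectations and square roots yields $2L(\E\norm{X-x^\star}^2)^{1/2}$.

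\textbf{Finiteness.} Square-integrability of $X$ ensures all these quantities are finite, since $\nabla f_i$ is Lipschitz and hence has linear growth.

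There is no genuine obstacle here. The only care needed is to apply Minkowski jointly over the average in $i$ and the expectation, rather than applying $(a+b)^2\le 2a^2+2b^2$, which would lose a factor of $\sqrt2$ against the stated constants.
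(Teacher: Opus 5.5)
Your proposal is correct and matches the paper's argument. The paper also splits off the heterogeneity at $x^\star$ with a triangle inequality in the node-averaged norm and bounds the two smoothness differences by $L\norm{X-x^\star}$ each. The only difference is bookkeeping: the paper first does this pointwise for deterministic $x$ and then applies the triangle inequality in $L_2(\Omega)$, whereas you apply Minkowski once in $L^2(\Omega;(\R^d)^M)$.
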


\begin{proof}
For any deterministic $x\in\R^d$,
\begin{align*}
    &\left(
        \frac1M\sum_{i=1}^M
        \norm{\nabla f_i(x)-\nabla f(x)}^2
    \right)^{1/2}
    \\
    &\quad\le
    \left(
        \frac1M\sum_{i=1}^M
        \norm{\nabla f_i(x^\star)-\nabla f(x^\star)}^2
    \right)^{1/2}
    \\
    &\qquad
    +
    \left(
        \frac1M\sum_{i=1}^M
        \norm{\nabla f_i(x)-\nabla f_i(x^\star)}^2
    \right)^{1/2}
    +
    \norm{\nabla f(x)-\nabla f(x^\star)}.
\end{align*}
The first term is at most $\zeta_\star$ by \cref{eq:bounded-heterogeneity}; the last two terms are at most $L\norm{x-x^\star}$ by $L$-smoothness of the $f_i$'s and of $f$.  Therefore the deterministic heterogeneity at $x$ is at most $\zeta_\star+2L\norm{x-x^\star}$.  Applying this pointwise with $x=X$ and using the triangle inequality in $L_2$ gives the claim.
\end{proof}

Next we show that the network averages satisfy the same delayed accelerated recursions as the single-node algorithm (\cref{alg:delayed-comp-acsa}), with a single stochastic oracle given by the average of the local stochastic gradients.  We also give bounds on the bias and variance of this averaged oracle in terms of the network disagreement.

\begin{lemma}[Average oracle induced by delayed gossip]
\label{lem:distributed-oracle}
The network averages satisfy
\[
    \bar q_{t+1}
    =
    \bar x_t+m_t(1+m_{t+1})(\bar x_t-\bar x_{t-1}),
    \qquad
    \bar z_{t+1}
    =
    \bar z_t-\eta_t\widehat G_t,
\]
\[
    \bar x_{t+1}
    =
    (1-\theta_t)\bar x_t+\theta_t\bar z_{t+1}.
\]
In particular, when $\eta_t=a_t/\beta$ for all $t$, these are the same delayed accelerated recursions as \cref{alg:delayed-comp-acsa}, with stochastic oracle $\widehat G_t$ and query point $\bar q_t$. Moreover,
\begin{equation}
\label{eq:distributed-bias-variance}
    \norm{b_t}
    \le
    \frac{L}{\sqrt M}\norm{Q_t-\bar Q_t}_F,
    \qquad
    \E[\norm{\zeta_t-b_t}^2\mid\calF_t]
    \le
    \frac{\sigma^2}{MB}.
\end{equation}
\end{lemma}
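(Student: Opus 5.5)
The plan is to right-multiply every matrix recursion of \cref{alg:distributed-delayed-gossip} by $\frac1M\mathbf 1$, using that $P$ is doubly stochastic, so $P\mathbf 1=\mathbf 1$. Once the averaged recursions are in hand, the bias and variance bounds follow from smoothness and from conditional independence of the oracle samples.

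\emph{Averaged recursions.} Each gossip step gives $V_t^{(s)}\mathbf 1=Y_t^{(s-1)}\mathbf 1+\beta_{\rm g}(Y_t^{(s-1)}-Y_t^{(s-2)})\mathbf 1$ and $Y_t^{(s)}\mathbf 1=(1-\alpha_{\rm g})V_t^{(s)}\mathbf 1+\alpha_{\rm g}V_t^{(s)}P\mathbf 1=V_t^{(s)}\mathbf 1$. Since $Y_t^{(-1)}=Y_t^{(0)}=X_t$, induction on $s$ gives $Y_t^{(s)}\mathbf 1=X_t\mathbf 1$ for all $s$, so $\widetilde X_t\mathbf 1=X_t\mathbf 1$; this is the average-invariance argument of \cref{thm:nesterov-gossip-convergence} applied row by row. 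The initialization $\widetilde X_{-1}=X_{-1}=x_0\mathbf 1^\top$ covers $t=0$. Averaging the formula for $Q_{t+1}$ then yields $\bar q_{t+1}=\bar x_t+m_t(1+m_{t+1})(\bar x_t-\bar x_{t-1})$, with $\bar x_{-1}=x_0$, matching \cref{alg:delayed-comp-acsa}. The $Z$ and $X$ updates are linear and local, so averaging gives $\bar z_{t+1}=\bar z_t-\eta_t\widehat G_t$ and $\bar x_{t+1}=(1-\theta_t)\bar x_t+\theta_t\bar z_{t+1}$. With $\eta_t=a_t/\beta$ and the initial conditions $\bar z_0=\bar x_0=\bar q_0=x_0$, these are exactly the recursions of \cref{alg:delayed-comp-acsa}.

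\emph{Bias.} Let $\calF_t$ contain everything up to the formation of $Q_t$, so each $q_{t,i}$ is $\calF_t$-measurable, and the $B$ fresh samples at macro step $t$ are conditionally unbiased. Then $\E[g_{t,i}\mid\calF_t]=\nabla f_i(q_{t,i})$, hence $b_t=\frac1M\sum_i(\nabla f_i(q_{t,i})-\nabla f_i(\bar q_t))$, using $\nabla f(\bar q_t)=\frac1M\sum_i\nabla f_i(\bar q_t)$. Applying the triangle inequality, $L$-smoothness of each $f_i$, and Cauchy--Schwarz,
\[
\norm{b_t}\le\frac LM\sum_i\norm{q_{t,i}-\bar q_t}\le\frac{L}{\sqrt M}\Bigl(\sum_i\norm{q_{t,i}-\bar q_t}^2\Bigr)^{1/2}=\frac{L}{\sqrt M}\norm{Q_t-\bar Q_t}_F.
\]

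\emph{Variance.} We have $\zeta_t-b_t=\frac1{MB}\sum_{i,s}\bigl(G_i(q_{t,i},\xi_{t,i}^{(s)})-\nabla f_i(q_{t,i})\bigr)$. These $MB$ terms are conditionally mean-zero given $\calF_t$ and conditionally independent, so they are pairwise orthogonal in $L_2$, and each has conditional second moment at most $\sigma^2$. Hence the conditional second moment of the sum is at most $MB\sigma^2/(MB)^2=\sigma^2/(MB)$.

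The main obstacle, mild as it is, lies in the measurability bookkeeping. The samples within a macro step are all drawn at the fixed point $q_{t,i}$, so the conditional-independence assumption, which is stated given past queries, applies across both $i$ and $s$. It also has to be checked that $\calF_t$ is the correct filtration for \cref{thm:main}, in the sense that $\bar q_t$ and $\bar z_t$ are $\calF_t$-measurable.
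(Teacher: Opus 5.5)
Your proposal is correct and follows essentially the same route as the paper. Both arguments use average preservation of each gossip block via $P\mathbf 1=\mathbf 1$ (the paper writes $PJ=J$), average the linear local updates, bound $b_t$ by smoothness plus Cauchy--Schwarz, and get $\sigma^2/(MB)$ from $MB$ conditionally independent mean-zero samples; your extra bookkeeping (the $t=0$ case via $\widetilde X_{-1}=X_{-1}=x_0\mathbf 1^\top$, and the explicit orthogonality step) only spells out what the paper leaves implicit.
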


\begin{proof}
Since $PJ=J$, each accelerated gossip block preserves network averages.  Thus
\[
    \bar q_{t+1}
    =
    \bar x_t+m_t(1+m_{t+1})(\bar x_t-\bar x_{t-1}).
\]
Also, by averaging the local updates,
\[
    \bar z_{t+1}
    =
    \bar z_t-\eta_t\widehat G_t,
    \qquad
    \bar x_{t+1}
    =
    (1-\theta_t)\bar x_t+\theta_t\bar z_{t+1}.
\]
When $\eta_t=a_t/\beta$ for all $t$, these are precisely the single-node delayed accelerated recursions for the averaged variables.

The conditional bias is
\[
    b_t
    =
    \frac1M\sum_{i=1}^M
    \left(\nabla f_i(q_{t,i})-\nabla f_i(\bar q_t)\right).
\]
By $L$-smoothness and Cauchy-Schwarz inequality,
\[
    \norm{b_t}
    \le
    \frac{L}{M}\sum_{i=1}^M\norm{q_{t,i}-\bar q_t}
    \le
    \frac{L}{\sqrt M}\norm{Q_t-\bar Q_t}_F.
\]
The centered stochastic part is the average of $MB$ conditionally independent zero-mean samples, each with conditional second moment at most $\sigma^2$, hence its conditional second moment is at most $\sigma^2/(MB)$.
\end{proof}

Finally, we give a bound on the network bias and output disagreement induced by delayed gossip.  The bound depends on the initial disagreement, the heterogeneity of the local gradients away from the optimum, and the pathwise bound on the network average.

\begin{lemma}[RMS network bias and output disagreement from delayed gossip]
\label{lem:distributed-disagreement}
Assume \cref{eq:bounded-heterogeneity}.  Suppose the algorithmic parameters
obey, for \(0\le t<T\),
\[
    \eta_t\ge 0,
    \qquad
    0\le \theta_t\le 1,
    \qquad
    0\le m_t(1+m_{t+1})\le 2,
\]
and that each accelerated gossip block uses
\[
    \alpha_{\rm g}=\frac12,
    \qquad
    \beta_{\rm g}
    =
    \frac{1-\sqrt{\rho/2}}{1+\sqrt{\rho/2}}.
\]
Let
\[
    Q_T^{\rm rms}:=
    \max_{0\le t<T}
    \left(\E\norm{\bar q_t-x^\star}^2\right)^{1/2},
    \qquad
    S_T^\eta:=\sum_{i=0}^{T-1}\eta_i,
    \qquad
    \tau:=\frac{2}{\sqrt\rho}
    \left(1-\sqrt{\rho/2}\right)^{B/2}.
\]
Define the initial disagreement quantities
\[
    D_{\rm init}^{\rm rms}
    :=
    \left(\E\norm{X_0-\bar X_0}_F^2\right)^{1/2}
    +
    \left(\E\norm{Z_0-\bar Z_0}_F^2\right)^{1/2},
\]
\[
    Q_{\rm init}^{\rm rms}
    :=
    \max\left\{
    \left(\E\norm{Q_0-\bar Q_0}_F^2\right)^{1/2},
    \left(\E\norm{\widetilde X_{-1}-\bar{\widetilde X}_{-1}}_F^2\right)^{1/2}
    \right\},
    \qquad
    \Delta_{\rm init}^{\rm rms}:=
    D_{\rm init}^{\rm rms}+3LS_T^\eta Q_{\rm init}^{\rm rms}.
\]
If
\begin{equation}
\label{eq:network-bias-absorb-condition}
    5LS_T^\eta\tau\le \frac12,
\end{equation}
then, for every $0\le t<T$,
\begin{equation}
\label{eq:network-bias-direct-bound}
    \left(\E\norm{b_t}^2\right)^{1/2}
    \le
    10LS_T^\eta\tau
    \left(\zeta_\star+\frac{\sigma}{\sqrt B}+2LQ_T^{\rm rms}\right)
    +
    \frac{10L\tau\Delta_{\rm init}^{\rm rms}+3LQ_{\rm init}^{\rm rms}}{\sqrt M}.
\end{equation}
Moreover, the final gossiped output satisfies
\begin{equation}
\label{eq:gossiped-output-disagreement}
    \left(\E\norm{\widetilde X_T-\bar X_T}_F^2\right)^{1/2}
    \le
    2S_T^\eta\tau\sqrt M
    \left(\zeta_\star+\frac{\sigma}{\sqrt B}+2LQ_T^{\rm rms}\right)
    +
    2\tau\Delta_{\rm init}^{\rm rms}.
\end{equation}
\end{lemma}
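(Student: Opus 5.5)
We track the disagreement (consensus-error) components of the three matrix states. Write $\Pi:=I-J$ and set
\[
E^X_t:=X_t\Pi,\qquad E^Z_t:=Z_t\Pi,\qquad E^{\widetilde X}_t:=\widetilde X_t\Pi,\qquad E^Q_t:=Q_t\Pi .
\]
All bounds will be in the $L_2(\Omega;\text{Frobenius})$ norm $\|\cdot\|_{\rm rms}:=(\E\|\cdot\|_F^2)^{1/2}$, so that triangle inequalities pass through expectations.

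\emph{Step 1: gossip output.} The gossip block is linear and acts row by row on $X_t$ while preserving averages. \cref{thm:nesterov-gossip-convergence} applied to each row therefore gives
\[
\|E^{\widetilde X}_t\|_F\le\tau\|E^X_t\|_F .
\]

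\emph{Step 2: query disagreement.} From the query formula and $m_t(1+m_{t+1})\le2$,
\[
\|E^Q_{t+1}\|\le 3\|E^{\widetilde X}_t\|+2\|E^{\widetilde X}_{t-1}\|\le 3\tau\|E^X_t\|+2\tau\|E^X_{t-1}\| .
\]
For $t=0$ the term $E^{\widetilde X}_{-1}$ is instead controlled by $Q_{\rm init}^{\rm rms}$; this is where the $Q_{\rm init}$ terms come from.

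\emph{Step 3: primal and dual disagreement.} The updates give
\[
E^Z_{t+1}=E^Z_t-\eta_tG_t\Pi,\qquad E^X_{t+1}=(1-\theta_t)E^X_t+\theta_tE^Z_{t+1} .
\]
By induction, using $\theta_t\in[0,1]$, both $\|E^X_t\|$ and $\|E^Z_t\|$ are at most
\[
D_{\rm init}+\sum_{s<t}\eta_s\|G_s\Pi\| .
\]

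\emph{Step 4: the local-gradient discrepancy $\|G_s\Pi\|_{\rm rms}$.} Split column $i$ of $G_s$ as
\[
g_{s,i}=\bigl(g_{s,i}-\nabla f_i(q_{s,i})\bigr)+\bigl(\nabla f_i(q_{s,i})-\nabla f_i(\bar q_s)\bigr)+\bigl(\nabla f_i(\bar q_s)-\nabla f(\bar q_s)\bigr)+\nabla f(\bar q_s).
\]
Since $\Pi$ is a contraction and kills the common column $\nabla f(\bar q_s)$, the three remaining pieces contribute respectively:
\begin{itemize}
\item the noise piece, at most $\sqrt M\,\sigma/\sqrt B$ (minibatch variance);
\item the smoothness piece, at most $L\|E^Q_s\|$;
\item the heterogeneity piece, at most $\sqrt M\bigl(\zeta_\star+2LQ_T^{\rm rms}\bigr)$ by \cref{lem:heterogeneity-away} with $X=\bar q_s$.
\end{itemize}

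\emph{Step 5: closing the feedback loop.} Combining Steps 2–4 gives a recursive bound. Let
\[
U:=\max_{t\le T}\bigl(\|E^X_t\|_{\rm rms}+\|E^Z_t\|_{\rm rms}\bigr),\qquad K:=\sqrt M\bigl(\zeta_\star+\sigma/\sqrt B+2LQ_T^{\rm rms}\bigr).
\]
Then $\|E^Q_s\|\le 5\tau U$ for $s\ge1$, and $\|E^Q_0\|\le Q_{\rm init}$. Summing Step 3 with the weights $\eta_s$, and counting the two states,
\[
U\le D_{\rm init}+2S_T^\eta K+2LS_T^\eta\bigl(5\tau U+Q_{\rm init}\bigr) .
\]
Constants need care: the $Q_{\rm init}$ contribution enters through both $s=0$ and the $\widetilde X_{-1}$ term in $E^Q_1$, which yields the factor $3$ in $\Delta^{\rm rms}_{\rm init}$. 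Under \cref{eq:network-bias-absorb-condition} we have $10LS_T^\eta\tau\le1$. I will organize the bookkeeping so that the absorbed coefficient is exactly $5LS^\eta_T\tau\le\tfrac12$, for example by tracking only the worst state rather than their sum. This gives
\[
U\le 2\bigl(S_T^\eta K+\Delta^{\rm rms}_{\rm init}\bigr),
\]
where the exact constant has to be matched to the stated one.

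\emph{Step 6: conclusions.} For \cref{eq:gossiped-output-disagreement}: since $\widetilde X_T$ is the gossip of $X_T$, Step 1 gives $\tau\|E^X_T\|\le\tau U$, which yields the claim. For \cref{eq:network-bias-direct-bound}: \cref{lem:distributed-oracle} gives
\[
\|b_t\|\le\frac{L}{\sqrt M}\|E^Q_t\|\le\frac{L}{\sqrt M}\bigl(5\tau U+Q_{\rm init}\text{ terms}\bigr).
\]
Substituting the bound on $U$ produces the $10LS_T^\eta\tau(\cdot)$ term plus the $(10L\tau\Delta+3LQ_{\rm init})/\sqrt M$ term.

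I expect the main obstacle to be Step 5. The difficulty is arranging the inductive bookkeeping, meaning a uniform-in-$t$ bound that handles the $t=0,1$ boundary terms involving $\widetilde X_{-1}$ and $Q_0$, so that the self-referential term is absorbed exactly under \cref{eq:network-bias-absorb-condition} with the stated constants. The qualitative argument itself is a routine contraction. One further point needs checking: $Q_T^{\rm rms}$ is defined as a maximum over $t<T$ only, which suffices because $G_s$ is used only for $s<T$.
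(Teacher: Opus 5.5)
Your proposal is correct and follows the paper's proof essentially step for step. The shared steps are the row-wise accelerated-gossip contraction, the $5\tau$ query-disagreement bound with the $\widetilde X_{-1}$ and $Q_0$ boundary terms absorbed into $3LS_T^\eta Q^{\rm rms}_{\rm init}$, the three-piece split of $G_t\Pi$, the convex-combination bound from $\theta_t\in[0,1]$, and absorption under $5LS_T^\eta\tau\le\frac12$.

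The Step 5 bookkeeping is resolved as the paper does it. Since queries see only the primal copies through $\widetilde X$, take $U:=\max_{t\le T}(\E\|X_t-\bar X_t\|_F^2)^{1/2}$ alone, not its sum with the $Z$-disagreement. Then the self-coupling coefficient is exactly $5LS_T^\eta\tau$, and $U\le 2\Delta^{\rm rms}_{\rm init}+2S_T^\eta\sqrt M(\zeta_\star+\sigma/\sqrt B+2LQ_T^{\rm rms})$, from which both stated bounds follow with the given constants.
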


\begin{proof}
Define
\[
    D_T^{\rm rms}
    :=
    \max_{0\le t\le T}
    \left(\E\norm{X_t-\bar X_t}_F^2\right)^{1/2}.
\]
By \cref{eq:gossip-spectral-gap}, the non-consensus eigenvalues of $I-P$ lie in $[\rho,2]$.  Thus \cref{thm:nesterov-gossip-convergence} applies directly.  Applying it row by row to each accelerated gossip block gives preservation of averages and the Frobenius contraction factor $\tau$.
Let $J:=\mathbf 1\mathbf 1^\top/M$ and write $W^\perp:=W(I-J)=W-\bar W$ for the disagreement component of any matrix $W\in\R^{d\times M}$.  First we bound the local gradient disagreement.  Since $G_t^\perp$ is the distance from $G_t$ to the consensus subspace,
\[
    \norm{G_t^\perp}_F
    \le
    \norm{G_t-\nabla f(\bar q_t)\mathbf 1^\top}_F .
\]
Writing $g_{t,i}=B^{-1}\sum_{s=1}^B G_i(q_{t,i},\xi_{t,i}^{(s)})$, the $i$th column of the matrix on the right is
\[
    g_{t,i}-\nabla f(\bar q_t)
    =
    \bigl(g_{t,i}-\nabla f_i(q_{t,i})\bigr)
    +
    \bigl(\nabla f_i(q_{t,i})-\nabla f_i(\bar q_t)\bigr)
    +
    \bigl(\nabla f_i(\bar q_t)-\nabla f(\bar q_t)\bigr).
\]
For $t\ge 1$, since
\[
    Q_t
    =
    \widetilde X_{t-1}
    +
    m_{t-1}(1+m_t)(\widetilde X_{t-1}-\widetilde X_{t-2})
\]
and $0\le m_{t-1}(1+m_t)\le 2$, the accelerated-gossip contraction gives
\[
    \left(\E\norm{Q_t-\bar Q_t}_F^2\right)^{1/2}
    \le
    5\tau D_T^{\rm rms}
    +
    2Q_{\rm init}^{\rm rms}.
\]
The same bound, with the right-hand side enlarged to
\(5\tau D_T^{\rm rms}+3Q_{\rm init}^{\rm rms}\), also holds at \(t=0\).
By the triangle inequality in $L_2$, the minibatch variance bound, $L$-smoothness, the preceding display, and \cref{lem:heterogeneity-away},
\begin{equation}
\label{eq:local-gradient-disagreement}
    \left(\E\norm{G_t^\perp}_F^2\right)^{1/2}
    \le
    \sqrt M\,\frac{\sigma}{\sqrt B}
    +
    5L\tau D_T^{\rm rms}
    +
    3LQ_{\rm init}^{\rm rms}
    +
    \sqrt M\left(\zeta_\star+2LQ_T^{\rm rms}\right).
\end{equation}

We now express primal disagreement through past gradient disagreements:
\[
    Z_{t+1}^\perp
    =
    Z_t^\perp-\eta_tG_t^\perp,
    \qquad
    X_{t+1}^\perp
    =
    (1-\theta_t)X_t^\perp+\theta_tZ_{t+1}^\perp.
\]
Because $0\le\theta_t\le 1$,
\[
    \norm{X_t^\perp}_F
    \le
    \norm{X_0^\perp}_F+\norm{Z_0^\perp}_F
    +
    \sum_{i=0}^{t-1}\eta_i\norm{G_i^\perp}_F,
    \qquad 0\le t\le T,
\]
pathwise.
Taking $L_2$ norms, using the triangle inequality (in $L_2$), and applying \cref{eq:local-gradient-disagreement} gives
\[
    D_T^{\rm rms}
    \le
    D_{\rm init}^{\rm rms}
    +
    S_T^\eta
    \left(
        5L\tau D_T^{\rm rms}
        +
        3LQ_{\rm init}^{\rm rms}
        +
        \sqrt M\left(\zeta_\star+\frac{\sigma}{\sqrt B}+2LQ_T^{\rm rms}\right)
    \right).
\]
Under \cref{eq:network-bias-absorb-condition}, moving the self-coupling term to the left gives
\[
    D_T^{\rm rms}
    \le
    2\Delta_{\rm init}^{\rm rms}
    +
    2S_T^\eta
    \sqrt M\left(\zeta_\star+\frac{\sigma}{\sqrt B}+2LQ_T^{\rm rms}\right).
\]

Since $\norm{\widetilde X_T-\bar X_T}_F\le \tau\norm{X_T-\bar X_T}_F$, this proves \cref{eq:gossiped-output-disagreement}.  Finally, for every $0\le t<T$, by \cref{eq:distributed-bias-variance} and the query-disagreement bound above,
\[
    \norm{b_t}
    \le
    \frac{L}{\sqrt M}\norm{Q_t-\bar Q_t}_F
    \le
    \frac{5L\tau}{\sqrt M}D_T^{\rm rms}
    +
    \frac{3LQ_{\rm init}^{\rm rms}}{\sqrt M},
\]
and using the preceding bound on $D_T^{\rm rms}$ proves \cref{eq:network-bias-direct-bound}.
\end{proof}

We proceed to prove the main convergence result for \cref{alg:distributed-delayed-gossip}.

\subsection{Proof of \texorpdfstring{\cref{thm:distributed-gossip}}{Theorem 1}}

\begin{proof}
\medskip
\noindent\textbf{Stage 1: Reduction to a single-node problem.}
By \cref{lem:distributed-oracle}, the averaged dynamics satisfy the
single-node delayed accelerated recursion analyzed in \cref{thm:main}.  Namely,
with averaged oracle
\(\widehat G_t=\nabla f(\bar q_t)+\zeta_t\), the sequence
\((\bar q_t,\bar z_t,\bar x_t)\) satisfies, for \(0\le t<T\),
\[
    \bar q_{t+1}
    =
    \bar x_t+m_t(1+m_{t+1})(\bar x_t-\bar x_{t-1}),
    \qquad
    \bar z_{t+1}
    =
    \bar z_t-\eta_t\widehat G_t,
\]
\[
    \bar x_{t+1}
    =
    (1-\theta_t)\bar x_t+\theta_t\bar z_{t+1}.
\]
The same lemma gives the centered-variance part of
\cref{eq:noise-assumption}, namely
\[
    \E[\norm{\zeta_t-b_t}^2\mid\calF_t]\le \frac{\sigma^2}{MB}.
\]
Since the chosen parameters have \(\eta_t=a_t/\beta\), and since the
consensual initialization gives
\(\bar x_{-1}=\bar x_0=\bar z_0=\bar q_0=x_0\), the assumption
\(R\ge\norm{x_0-x^\star}\) gives the radius condition
\(R\ge\norm{\bar x_0-x^\star}\).  Thus the averaged dynamics have the
initialization and recursion required by \cref{thm:main}.

\medskip
\noindent\textbf{Stage 2: Verifying the hypotheses of \cref{thm:main}.}
In order to invoke \cref{thm:main}, it remains to verify the following
two properties:
\begin{enumerate}[label=(\roman*),leftmargin=*]
    \item The averaged oracle calls are square-integrable, as required in
    \cref{eq:square-integrable};
    \item The averaged oracle admits deterministic RMS bias bounds
    \((\epsilon_t)_{t=0}^{T-1}\), completing \cref{eq:noise-assumption};
\end{enumerate}

We first prove property~(i).  The optimality condition gives
\(\nabla f(x^\star)=0\), so \cref{eq:bounded-heterogeneity} bounds the average
squared norm of the vectors \(\nabla f_i(x^\star)\).  Together with
\(L\)-smoothness and the local variance assumption, this implies by induction
that all local iterates, query points, and averaged oracle calls generated up
to time \(T\) are square-integrable.

We next prove property~(ii).  The goal is to show that the RMS oracle
bias \((\E\norm{b_t}^2)^{1/2}\) is uniformly bounded for all \(0\le t<T\). Let
\[
    \tau:=\frac{2}{\sqrt\rho}
    \left(1-\sqrt{\rho/2}\right)^{B/2},
    \qquad
    S_T^\eta:=\sum_{t=0}^{T-1}\eta_t,
    \qquad
    Q_T^{\rm rms}
    :=
    \max_{0\le t<T}
    \left(\E\norm{\bar q_t-x^\star}^2\right)^{1/2}.
\]
The chosen parameters satisfy $\eta_t\ge0$, $0\le \theta_t\le1$, and
$0\le m_t(1+m_{t+1})\le2$ for $0\le t<T$, and the gossip parameters are those
required by \cref{lem:distributed-disagreement}.  The initialization in
\cref{alg:distributed-delayed-gossip} is consensual, so
\(D_{\rm init}^{\rm rms}=Q_{\rm init}^{\rm rms}
=\Delta_{\rm init}^{\rm rms}=0\) in
\cref{lem:distributed-disagreement}.  Also, \(Q_0\) is consensual, so
\(b_0=0\) by \cref{eq:distributed-bias-variance}.  Thus
\cref{lem:distributed-disagreement} will apply once we check that
\(5LS_T^\eta\tau\le 1/2\).

To check this inequality, put
\[
    U_N:=\frac{N+M}{M},
    \qquad
    \lambda_N:=\frac{32 N}{\sqrt M}
    \left(
        1+
        \frac{\zeta_\star+\sigma}{LR}
    \right).
\]
By definition, $\Lambda_N\ge e\lambda_N$.  Since $B\ge 1$ and $T\ge 1$, we
have $N\ge M$, $\lambda_N\ge 1$, $U_N\le 2N/M$,
$T=N/(MB)\le N/M$, and $T+1\le U_N$.  Hence
\[
    10LA_{T-1}
    =
    \frac5{128}T(T+1)
    \le
    \left(
        \frac{N+M}{M}
    \right)^2
    \le \lambda_N^7 .
\]
By the definition of $\tau$, the inequality
$1-x\le \exp(-x)$ for $x\in[0,1]$, and the choice of $B$,
\[
    \tau
    \le
    \frac{2}{\sqrt\rho}\exp\left(-\frac{\sqrt\rho}{2\sqrt2}B\right)
    \le
    \frac{2}{\sqrt\rho}\left(\frac{\rho}{e\lambda_N}\right)^7
    \le
    \lambda_N^{-7}.
\]
Since $S_T^\eta=A_{T-1}/\beta$ and $\beta\ge1$, this implies
\[
    5LS_T^\eta\tau \le 5L A_{T-1}\tau \le \frac12.
\]
Therefore \cref{lem:distributed-disagreement} applies and gives
\[
    \left(\E\norm{b_t}^2\right)^{1/2}
    \le
    10LS_T^\eta\tau
    \left(\zeta_\star+\frac{\sigma}{\sqrt B}+2LQ_T^{\rm rms}\right)
    \le
    \bar\epsilon,
\]
for every $0\le t<T$, where
\[
    \bar\epsilon
    :=
    10LA_{T-1}\tau(\zeta_\star+\sigma+2LQ_T^{\rm rms}).
\]

We next close the radius dependence in $\bar\epsilon$.  Define the crude
no-bias radius
\[
    C_T
    :=
    2^{10}(T+1)
    \left(
        R+
        \sqrt{\frac{\sigma R}{L\sqrt{MB}}}\,T^{3/4}
    \right).
\]
Applying \cref{lem:expected-distance} to the averaged dynamics with oracle
variance parameter $\sigma^2/(MB)$ and uniform RMS bias bound $\bar\epsilon$
gives
\[
    Q_T^{\rm rms}
    \le
    C_T+2^{10}(T+1)^2A_{T-1}\bar\epsilon .
\]
Moreover,
\[
    T^2(T+1)^4
    \le
    \left(\frac{N+N}{M}\right)^6
    \le \lambda_N^7,
\]
and hence
\[
    c_T
    :=
    20\cdot 2^{10}L^2A_{T-1}^2(T+1)^2\tau
    =
    \frac5{16}T^2(T+1)^4\tau
    \le
    \frac5{16}.
\]
Substituting the definition of $\bar\epsilon$ into the preceding radius bound
gives
\begin{align*}
    Q_T^{\rm rms}
    &\le
    C_T+2^{10}(T+1)^2A_{T-1}\bar\epsilon
    \\
    &=
    C_T
    +2^{10}(T+1)^2A_{T-1}
    \cdot
    10LA_{T-1}\tau(\zeta_\star+\sigma+2LQ_T^{\rm rms})
    \\
    &=
    C_T
    +
    10\cdot 2^{10}L A_{T-1}^2(T+1)^2\tau
    (\zeta_\star+\sigma)
    +
    c_TQ_T^{\rm rms}.
\end{align*}
Since $c_T\le 1/2$,
\[
    Q_T^{\rm rms}
    \le
    2C_T
    +
    20\cdot 2^{10}L A_{T-1}^2(T+1)^2\tau
    (\zeta_\star+\sigma).
\]
Substituting the resulting bound on \(Q_T^{\rm rms}\) back into the
expression \(\zeta_\star+\sigma+2LQ_T^{\rm rms}\) and using
\(2c_T\le 5/8<3\) shows that this quantity is at most a constant
multiple of its bias-free analogue:
\begin{align*}
    \zeta_\star+\sigma+2LQ_T^{\rm rms}
    &\le
    \zeta_\star+\sigma+4LC_T
    +40\cdot 2^{10}L^2A_{T-1}^2(T+1)^2\tau
    (\zeta_\star+\sigma)
    \\
    &=
    (1+2c_T)(\zeta_\star+\sigma)+4LC_T
    \\
    &\le
    4(\zeta_\star+\sigma)+4LC_T
    =
    4(\zeta_\star+\sigma+LC_T).
\end{align*}
Consequently,
\begin{align*}
    \bar\epsilon
    &=
    10LA_{T-1}\tau(\zeta_\star+\sigma+2LQ_T^{\rm rms})
    \\
    &\le
    10LA_{T-1}\tau\cdot 4(\zeta_\star+\sigma+LC_T)
    \\
    &=
    40LA_{T-1}\tau(\zeta_\star+\sigma+LC_T).
\end{align*}

It remains to make this last bound explicit.  Using $B\ge1$, $T\le U_N$, and
$T+1\le U_N$,
\[
    C_T
    \le
    2^{10}U_N
    \left(
        R+
        \sqrt{\frac{\sigma R}{L}}\,U_N^{3/4}
    \right).
\]
Since
\[
    \sqrt{\frac{\sigma}{LR}}
    \le
    1+\frac{\zeta_\star+\sigma}{LR},
\]
and $T^4(T+1)\le U_N^5$, we get
\begin{align*}
    \zeta_\star+\sigma+LC_T
    &\le
    LR\left(
        \frac{\zeta_\star+\sigma}{LR}
        +2^{10}U_N
        +2^{10}U_N^{7/4}\sqrt{\frac{\sigma}{LR}}
    \right)
    \\
    &\le
    \frac{LR}{2^{11}T^4(T+1)}
    2^{11}U_N^5
    \left(
        \frac{\zeta_\star+\sigma}{LR}
        +2^{10}U_N
        +2^{10}U_N^{7/4}\sqrt{\frac{\sigma}{LR}}
    \right)
    \\
    &\le
    \frac{LR}{2^{11}T^4(T+1)}
    \left(2^{16}+2^{27}+2^{28}\right)
    \left(\frac{N}{M}\right)^7
    \left(1+\frac{\zeta_\star+\sigma}{LR}\right)^7
    \\
    &\le
    \frac{LR\lambda_N^7}{2^{11}\sqrt M\,T^4(T+1)}.
\end{align*}
Combining this estimate with $\tau\le\lambda_N^{-7}$ and
$A_{T-1}=T(T+1)/(256L)$ gives
\begin{align*}
    \bar\epsilon
    &\le
    40LA_{T-1}\tau(\zeta_\star+\sigma+LC_T)
    \\
    &\le
    40L\cdot\frac{T(T+1)}{256L}\cdot\lambda_N^{-7}
    \cdot
    \frac{LR\lambda_N^7}{2^{11}\sqrt M\,T^4(T+1)}
    \\
    &=
    \frac{40}{2^{19}}\frac{LR}{\sqrt M\,T^3}
    \le
    2^{-13}\frac{LR}{\sqrt M\,T^3}.
\end{align*}

Thus property~(ii) holds with deterministic RMS bias bounds
\[
    \epsilon_0:=0,
    \qquad
    \epsilon_t:=\bar\epsilon,
    \qquad 1\le t<T.
\]

\medskip
\noindent\textbf{Stage 3: Applying \cref{thm:main} and absorbing the
network error.}
By the recursion, radius condition, centered variance bound, and
properties~(i)--(ii), the hypotheses of \cref{thm:main} hold for the averaged
dynamics with oracle variance parameter $\sigma^2/(MB)$ and the deterministic
RMS bias bounds above.
Let
\[
    \Gamma_T^{\rm net}
    :=
    \frac1{A_{T-1}}\sum_{t=0}^{T-1}a_t\epsilon_t .
\]
Applying \cref{thm:main} to the averaged dynamics and substituting $T=N/(MB)$ gives
\[
    \E[f(\bar x_T)-f(x^\star)]
    \le
    \frac{6\sigma R}{\sqrt N}
    +
    \frac{128LR^2M^2B^2}{N^2}
    +
    \mathcal E_N^{\rm net},
\]
where
\[
    \mathcal E_N^{\rm net}
    :=
    2^{11}
    \left(
        R+\sqrt{\frac{\sigma R}{L}}\,
        \frac{N^{3/4}}{MB}
    \right)
    \frac{N}{MB}\Gamma_T^{\rm net}
    +
    \frac{64}{L}\left(\frac{N}{MB}\right)^4(\Gamma_T^{\rm net})^2.
\]

\textbf{Absorbing the bias terms.}
Since $\epsilon_t\le 2^{-13}LR/T^3$ for every $0\le t<T$, the weighted average satisfies
\[
    \Gamma_T^{\rm net}
    \le
    2^{-13}\frac{LR}{T^3}.
\]

We verify the absorption term by term.  Let
\[
    S_N:=\frac{\sigma R}{\sqrt N},
    \qquad
    D_N:=\frac{LR^2}{T^2}
    =
    \frac{LR^2M^2B^2}{N^2}.
\]
If $\Gamma_T^{\rm net}\le 2^{-13}LR/T^3$, then
\[
    2^{11}RT\Gamma_T^{\rm net}\le \frac14 D_N,
    \qquad
    \frac{64}{L}T^4(\Gamma_T^{\rm net})^2\le 2^{-20}D_N.
\]
For the mixed stochastic-bias term, using
\[
    \sqrt{\frac{\sigma R}{L}}\frac{N^{3/4}}{MB}
    =
    \sqrt{\frac{\sigma R}{L\sqrt{MB}}}\,T^{3/4}
\]
and the same bound on $\Gamma_T^{\rm net}$ gives
\[
    2^{11}
    \sqrt{\frac{\sigma R}{L}}\frac{N^{3/4}}{MB}
    \cdot
    T\Gamma_T^{\rm net}
    \le
    \frac14\sqrt{D_NS_N}
    \le
    \frac18(D_N+S_N).
\]
Combining these estimates gives $\mathcal E_N^{\rm net}\le D_N+S_N$.  Substituting this into the preliminary bound gives the averaged-iterate estimate
\[
    \E[f(\bar x_T)-f(x^\star)]
    \le
    \frac{7\sigma R}{\sqrt N}
    +
    \frac{129LR^2M^2B^2}{N^2}.
\]

\medskip
\noindent\textbf{Stage 4: From the averaged iterate to local outputs.}
It remains to transfer the averaged-iterate guarantee to the local gossiped
outputs.  By \cref{eq:gossiped-output-disagreement} and the definition of
$\bar\epsilon$,
\[
    \left(\E\norm{\widetilde X_T-\bar X_T}_F^2\right)^{1/2}
    \le
    \frac{\sqrt M}{5L}\bar\epsilon,
\]
and hence, for every node $i$,
\[
    \left(\E\norm{\widetilde x_{T,i}-\bar x_T}^2\right)^{1/2}
    \le
    \frac{\sqrt M}{5L}\bar\epsilon
    \le
    2^{-13}\frac{R}{T^3}.
\]
The preceding output-disagreement bound implies
\[
    L\E\norm{\widetilde x_{T,i}-\bar x_T}^2
    \le
    D_N.
\]
For a fixed node $i$, the standard smooth convex inequality
\[
    f(y)-f(x^\star)
    \le
    2(f(x)-f(x^\star))+L\norm{y-x}^2
\]
with $x=\bar x_T$ and $y=\widetilde x_{T,i}$ gives
\[
    \E[f(\widetilde x_{T,i})-f(x^\star)]
    \le
    \frac{14\sigma R}{\sqrt N}
    +
    259D_N.
\]
Finally, by the choice of $B$ and $\rho\le 1$,
\[
    B
    \le
    2\max\left\{1,\frac{20}{\sqrt\rho}\log(\Lambda_N/\rho)\right\}
    \le
    \frac{40}{\sqrt\rho}\log(\Lambda_N/\rho).
\]
Thus
\[
    D_N
    \le
    \frac{1600LR^2M^2}{\rho N^2}
    \log^2(\Lambda_N/\rho),
\]
which gives the claimed bound.
\end{proof}

\section{The strongly convex case through restarting}
\label{app:strongly-convex-restarts}

This appendix records the restart reduction needed for strongly convex
objectives. The only new ingredient is that, after the first epoch, the local
copies used to initialize the next epoch need not be exactly consensual.  

\begin{algorithm}[H]
\caption{Restarted delayed accelerated gossip for strongly convex objectives}
\label{alg:restarted-distributed-gossip}
\begin{algorithmic}[1]
\STATE {\bf Input:} common \(x_0\), number of epochs \(K\), epoch budgets
\((N_s)_{s=0}^{K-1}\), epoch radii \((R_s)_{s=0}^{K-1}\), and the parameter
rules of \cref{thm:distributed-gossip}.
\STATE Set the initial matrix \(U^{(0)}=x_0\mathbf 1^\top\).
\FOR{\(s=0,1,\ldots,K-1\)}
    \STATE Run one warm-started epoch from
    \(X_{-1}=X_0=Z_0=Q_0=\widetilde X_{-1}=U^{(s)}\), using budget \(N_s\) and
    radius parameter \(R_s\).
    \STATE Let \(U^{(s+1)}\) be the matrix of local outputs
    \(\widetilde X^{(s)}\) produced by this epoch.
\ENDFOR
\STATE Each node outputs its column of \(U^{(K)}\).
\end{algorithmic}
\end{algorithm}

\begin{theorem}[Restarted rate for strongly convex objectives]
\label{thm:strongly-convex-restart}
Assume, in addition to the assumptions of \cref{thm:distributed-gossip}, that
\(f\) is \(\mu\)-strongly convex.  Let
\(\Delta_0\ge f(x_0)-f(x^\star)\), fix \(0<\epsilon\le\Delta_0\), and set
\[
    K:=\left\lceil \log_2\frac{\Delta_0}{\epsilon}\right\rceil,
    \qquad
    \Delta_s:=\frac{\Delta_0}{2^s},
    \qquad
    R_s:=\sqrt{\frac{2\Delta_0}{\mu\,2^s}},
    \qquad 0\le s\le K.
\]
Let
\[
    \ell_\epsilon
    :=
    \max\left\{
        60,\,
        \log\left[
            \frac{2^{100}}{\rho\sqrt M}
            \left(1+\frac{\zeta_\star+\sigma}{L\sqrt{\epsilon/\mu}}\right)
            \left(
                1+\frac{\sigma^2}{\mu\epsilon}
                +M\sqrt{\frac{L}{\mu\rho}}
            \right)
        \right]
    \right\},
\]
and choose the epoch budget
\begin{equation}
\label{eq:explicit-restart-budget}
    N_s
    :=
    \left\lceil
        2^{16}\frac{\sigma^2}{\mu\epsilon}
        +
        2^{18}M\sqrt{\frac{L}{\mu\rho}}\,\ell_\epsilon^2
    \right\rceil,
    \qquad 0\le s<K.
\end{equation}
For these budgets define
\[
    \Lambda_s:=
    \frac{100N_s}{\sqrt M}
    \left(1+\frac{\zeta_\star+\sigma}{LR_s}\right),
    \qquad
    B_s:=
    \left\lceil
        \max\left\{1,\frac{20}{\sqrt\rho}
        \log\frac{\Lambda_s}{\rho}\right\}
    \right\rceil.
\]
Assume for simplicity that \(T_s:=N_s/(MB_s)\) is a positive integer for every
\(0\le s<K\).  Run \cref{alg:restarted-distributed-gossip} with these epoch
budgets.  Then the returned local outputs satisfy
$
    \max_{1\le i\le M}
    \E[f(U^{(K)}_{\cdot i})-f(x^\star)]
    \le
    \epsilon
$
with a total sample complexity of
\[
    N
    =
    \sum_{s=0}^{K-1} N_s
    \le
    K\left(
        1+
        2^{16}\frac{\sigma^2}{\mu\epsilon}
        +
        2^{18}M\sqrt{\frac{L}{\mu\rho}}\,\ell_\epsilon^2
    \right)
    =
    \widetilde O\!\left(
        \frac{\sigma^2}{\mu\epsilon}
        +
        M\sqrt{\frac{L}{\mu\rho}}
        \log\frac{\Delta_0}{\epsilon}
    \right).
\]
\end{theorem}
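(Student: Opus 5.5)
The plan is a standard halving-restart argument: by induction on the epoch index, every local output gap is at most \(\Delta_s\). The one new point is that epochs \(s\ge1\) start from non-consensual copies \(U^{(s)}\), so \cref{thm:distributed-gossip} cannot be quoted verbatim. Instead I will rerun its proof and carry the nonzero initial-disagreement terms \(D_{\rm init}^{\rm rms},Q_{\rm init}^{\rm rms},\Delta_{\rm init}^{\rm rms}\) of \cref{lem:distributed-disagreement}, which vanished in the consensual case.

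\textbf{Induction hypothesis.} At the start of epoch \(s\) I will assume two things.

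(a) \(\E[f(\bar u^{(s)})-f(x^\star)]\le\Delta_s\) for the column average \(\bar u^{(s)}\), and the same bound at every node.

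(b) The disagreement satisfies \(\bigl(\E\norm{U^{(s)}-\bar U^{(s)}}_F^2\bigr)^{1/2}\le \kappa\sqrt M R_{s}/T_{s-1}^3\) for a tiny constant \(\kappa\).

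At \(s=0\), (b) holds with zero. Strong convexity turns (a) into \(\E\norm{\bar u^{(s)}-x^\star}^2\le 2\Delta_s/\mu=R_s^2\). This is only an RMS bound, not the deterministic \(R\ge\norm{x_0-x^\star}\) assumed in \cref{thm:main}. I will handle it by conditioning on \(U^{(s)}\): the convergence bound is linear in \(R^2\) through \(\beta R^2/2\) and \(\sigma^2H^2/\beta\), with \(\beta\) built from the deterministic \(R_s\), so taking expectations over the start only needs \(\E\norm{\bar u-x^\star}^2\le R_s^2\).

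\textbf{Epoch step.} Within epoch \(s\) I will redo Stages 2--4 of the proof of \cref{thm:distributed-gossip} with nonzero initial quantities. All four equal the disagreement bounded in (b); \(\widetilde X_{-1}\) is included because it is set to \(U^{(s)}\). Hence \(D_{\rm init}^{\rm rms}\le 2\kappa\sqrt MR_s/T^3\) and \(\Delta_{\rm init}^{\rm rms}\lesssim(1+LS^\eta_T)\kappa\sqrt MR_s/T^3\). The bias bound \cref{eq:network-bias-direct-bound} then acquires the extra term \((10L\tau\Delta_{\rm init}^{\rm rms}+3LQ^{\rm rms}_{\rm init})/\sqrt M\). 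Its \(\tau\)-part is negligible. The undamped part \(3LQ_{\rm init}^{\rm rms}/\sqrt M\) enters only at \(t=0\) (and \(t=1\) through \(\widetilde X_{-1}\)), so it contributes to \(\Gamma_T\) at most \(a_0\epsilon_0/A_{T-1}\lesssim L\kappa R_s/T^5\). This is absorbed exactly as in Stage 3, leaving \(\E f(\bar x_T)-f^\star\le 7\sigma R_s/\sqrt{N_s}+129LR_s^2M^2B_s^2/N_s^2\). The output bound \cref{eq:gossiped-output-disagreement} gives \(\tau\)-small disagreement, which reproduces (b) for \(s+1\) and the per-node factor-2 inequality from Stage 4.

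\textbf{Budget.} It remains to check \(14\sigma R_s/\sqrt{N_s}+2^{19}LR_s^2M^2\rho^{-1}N_s^{-2}\log^2(\Lambda_s/\rho)\le\Delta_{s+1}\). Substituting \(R_s^2=4\Delta_{s+1}/\mu\) and \(\Delta_{s+1}\ge\epsilon/2\), the first term is at most \(\Delta_{s+1}/2\) once \(N_s\ge 2^{16}\sigma^2/(\mu\epsilon)\). The second is at most \(\Delta_{s+1}/2\) once \(N_s\ge 2^{10}M\sqrt{L/(\mu\rho)}\log(\Lambda_s/\rho)\). The choice \eqref{eq:explicit-restart-budget} must then satisfy \(\log(\Lambda_s/\rho)\le\ell_\epsilon^2\) despite \(\Lambda_s\) depending on \(N_s\) itself. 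Since \(R_s\ge\sqrt{\epsilon/\mu}\), I have \(\Lambda_s\le (100N_s/\sqrt M)(1+(\zeta_\star+\sigma)/(L\sqrt{\epsilon/\mu}))\). Plugging in the explicit \(N_s\) and using \(\log\ell_\epsilon^2\ll\ell_\epsilon\) (because \(\ell_\epsilon\ge60\)) closes this fixed point.

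I expect the main obstacle to be the warm-start perturbation: verifying that the undamped \(3LQ_{\rm init}^{\rm rms}\) bias and the stochastic initial radius do not break the Stage 2 closure \(c_T\le1/2\), and keeping track of constants. Summing \(N_s\) over \(K\) epochs then gives the stated complexity.
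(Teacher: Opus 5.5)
Your proposal is correct and takes essentially the paper's route. The paper packages your inlined rerun of Stages 2--4 (nonzero initial disagreement whose undamped effect is confined to the $t=0$ query bias, plus an RMS-start version of \cref{thm:main}) as \cref{thm:warm-start-distributed-gossip}. It then runs the same induction on the gap, the radius $R_s$ via strong convexity, and the inherited disagreement, and closes the same self-referential check of $\log(\Lambda_s/\rho)$ against $\ell_\epsilon$. You leave implicit that $T_s$ is nonincreasing in $s$ (since $R_s$ decreases, $B_s$ is nondecreasing), which you need to pass from $T_{s-1}$ in (b) to the current $T_s$, and that $m_0=0$ means $\widetilde X_{-1}$ never actually enters a later query.
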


In order to prove \cref{thm:strongly-convex-restart}, we need the following warm-started version of \cref{thm:distributed-gossip}. We defer its proof until the end of this section, after the proof of \cref{thm:strongly-convex-restart}.

\begin{theorem}[Warm-started distributed epoch]
\label{thm:warm-start-distributed-gossip}
Assume the setting of \cref{sec:distributed-problem-setup}.  
Let \(U\in\R^{d\times M}\) be an initialization matrix and write 
\(\bar U:=U\mathbf 1\mathbf 1^\top/M\).  Assume
$
    R^2\ge \E\norm{\bar U_{\cdot,1}-x^\star}^2,
$
define
\[
    \Lambda_N
    :=
    \frac{100 N}{\sqrt M}
    \left(
        1+
        \frac{\zeta_\star+\sigma}{LR}
    \right),
\]
and choose
$
    B
    :=
    \left\lceil
        \max\left\{
            1,\,
            (20/\sqrt\rho)\log(\Lambda_N/\rho)
        \right\}
    \right\rceil.
$
Assume for simplicity that \(T:=N/(MB)\) is a positive integer, and let
\(\eta_t\) be the stepsize prescribed by \cref{thm:distributed-gossip} for
this choice of \(R,N,B,T\).
Let \(S_T^\eta:=\sum_{t=0}^{T-1}\eta_t\),
$
    \tau:=\frac{2}{\sqrt\rho}\left(1-\sqrt{\rho/2}\right)^{B/2},
$
and assume that the RMS column disagreement of the initialization satisfies
\begin{equation}
\label{eq:warm-start-disagreement-small}
    \left(\E\norm{U-\bar U}_F^2\right)^{1/2}
    \le
    2^{-16}\frac{R}{T},
    \qquad
    \tau(2+3LS_T^\eta)
    \left(\E\norm{U-\bar U}_F^2\right)^{1/2}
    \le
    2^{-24}\frac{R}{T^3}.
\end{equation}
Run \cref{alg:distributed-delayed-gossip} with the same parameter choices as in
\cref{thm:distributed-gossip}, except that the initialization line is replaced by
\[
    X_{-1}=X_0=Z_0=Q_0=\widetilde X_{-1}=U .
\]
Then, for every node \(i\),
\[
    \E[f(\widetilde x_{T,i})-f(x^\star)]
    \le
    \frac{20\sigma R}{\sqrt N}
    +
    \frac{2^{21}LR^2M^2}{\rho N^2}
    \log^2 \frac{\Lambda_N}{\rho}.
\]
Moreover, the final output disagreement satisfies
\begin{equation}
\label{eq:warm-start-output-disagreement}
    \left(\E\norm{\widetilde X_T-\bar X_T}_F^2\right)^{1/2}
    \le
    2^{-11}\frac{R}{T^3}.
\end{equation}
\end{theorem}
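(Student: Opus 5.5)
The plan is to rerun the four-stage proof of \cref{thm:distributed-gossip} with the initial disagreement quantities of \cref{lem:distributed-disagreement} now nonzero. Only two places change: the bias/radius closure in Stage~2, and the treatment of the random initial average.

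\textbf{Reduction to the single-node problem.} First, with $X_{-1}=X_0=Z_0=Q_0=\widetilde X_{-1}=U$, the averages start at $\bar u:=\bar U_{\cdot,1}$. \cref{lem:distributed-oracle} is purely algebraic, so it still shows that the averaged dynamics follow \cref{alg:delayed-comp-acsa} with variance $\sigma^2/(MB)$. The start $\bar u$ is random, but everything in \cref{thm:main} and \cref{lem:expected-distance} is proved with $\norm{x_0-x^\star}^2$ entering only through $\frac{\beta}{2}\norm{u-x_0}^2$ and the pathwise bound $\norm{x_s-x^\star}\le\norm{x_0-x^\star}+sD_s$. Taking expectations, or conditioning on $U$ and integrating, I will replace $R^2$ by $\E\norm{\bar u-x^\star}^2\le R^2$. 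This gives versions of \cref{thm:main} and \cref{lem:expected-distance} in the RMS radius, with identical constants; in the distance lemma one uses Minkowski in $L_2$ in place of the deterministic $R$.

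\textbf{Bias closure (Stage~2).} Next I apply \cref{lem:distributed-disagreement} with $\delta:=(\E\norm{U-\bar U}_F^2)^{1/2}$. Then $D_{\rm init}^{\rm rms}=2\delta$, $Q_{\rm init}^{\rm rms}=\delta$, and $\Delta_{\rm init}^{\rm rms}=(2+3LS_T^\eta)\delta$.
\begin{itemize}
\item The absorption condition $5LS_T^\eta\tau\le\frac12$ is checked exactly as before, since $B$ is unchanged.
\item The bias bound now carries an extra additive term $(10L\tau\Delta_{\rm init}^{\rm rms}+3L\delta)/\sqrt M$.
\item By the second condition in \cref{eq:warm-start-disagreement-small}, the $\tau$-part is at most $10\cdot2^{-24}LR/(\sqrt M T^3)$, which is fine.
\end{itemize}
The $3L\delta/\sqrt M$ part is only $O(LR/(\sqrt M T))$ and comes from $Q_0$. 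This is the main obstacle. The fix is to split the bias sequence, taking $\epsilon_0\le 3L\delta/\sqrt M$ and $\epsilon_t=\bar\epsilon'$ for $t\ge1$. For $t\ge1$ the queries $Q_t$ are built from gossiped $\widetilde X_{t-1},\widetilde X_{t-2}$, and $\widetilde X_{-1}=U$ only enters $Q_1$ with coefficient $m_0(1+m_1)=0$. So I will re-derive the query bound for $t\ge1$ without the $2Q_{\rm init}^{\rm rms}$ term, or otherwise absorb it.

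Then $\Gamma_T$ picks up $a_0\epsilon_0/A_{T-1}\lesssim \frac{1}{T^2}\cdot\frac{L\delta}{\sqrt M}\le 2^{-16}\frac{LR}{T^3}$ from the first condition. This is of the same order as before, so the radius self-consistency argument (the $c_T\le 5/16$ step) and the absorption step $\mathcal E\le D_N+S_N$ go through. Because of the weakened constants, the final bound becomes $\frac{20\sigma R}{\sqrt N}+2^{21}\cdots$ rather than $14$ and $2^{19}$. I also have to recheck that $Q_T^{\rm rms}$ now includes a $\delta$-contribution, via $2LQ_T^{\rm rms}$; it is bounded through \cref{lem:expected-distance} with the enlarged $\Gamma_T$, and I expect harmless constants.

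\textbf{Output (Stage~4).} Finally, \cref{eq:gossiped-output-disagreement} gives
\[
\Bigl(\E\norm{\widetilde X_T-\bar X_T}_F^2\Bigr)^{1/2}\le 2S_T^\eta\tau\sqrt M(\cdots)+2\tau\Delta_{\rm init}^{\rm rms}.
\]
The first term is at most $\frac{\sqrt M}{5L}\bar\epsilon\lesssim 2^{-13}R/T^3$ as before, where the factor $\sqrt M$ cancels against the $1/\sqrt M$ in $\bar\epsilon$. The second term is at most $2^{-23}R/T^3$ by the second condition. Together these yield \cref{eq:warm-start-output-disagreement}. The per-node bound then follows from the inequality $f(y)-f^\star\le 2(f(x)-f^\star)+L\norm{y-x}^2$ and the estimate on $B$, exactly as in the proof of \cref{thm:distributed-gossip}.
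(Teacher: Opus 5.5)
Your proposal is correct and follows essentially the same route as the paper's proof. It uses the same bookkeeping $D_{\rm init}^{\rm rms}=2\delta$, $Q_{\rm init}^{\rm rms}=\delta$, $\Delta_{\rm init}^{\rm rms}=(2+3LS_T^\eta)\delta$. It rests on the same key observation: the non-gossiped $Q_0$ biases only the $t=0$ oracle, which carries weight $a_0/A_{T-1}=O(T^{-2})$ and is controlled by the first smallness condition. For $t\ge1$ the query disagreement is at most $5\tau D_T^{\rm rms}$, and the remaining $\tau\Delta_{\rm init}^{\rm rms}$ terms are controlled by the second condition. After that you apply an RMS-start version of \cref{thm:main} and the unchanged radius-closing, absorption and output steps, as the paper does.

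Two of your remarks fill in details the paper leaves implicit: that $m_0(1+m_1)=0$ removes $\widetilde X_{-1}$ from $Q_1$, and that \cref{lem:expected-distance} also needs an RMS-start form. For that RMS-start form, use the direct-expectation route rather than conditioning on $U$ and integrating. The bias bounds $\epsilon_t$ are only unconditional RMS bounds, so they need not hold conditionally on $U$.
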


We are ready to prove the convergence for the restarted method.

\begin{proof}[Proof of \cref{thm:strongly-convex-restart}]
For the chosen budgets, let
\[
    \tau_s:=\frac{2}{\sqrt\rho}
    \left(1-\sqrt{\rho/2}\right)^{B_s/2},
    \qquad
    S_s^\eta:=\sum_{t=0}^{T_s-1}\eta_t,
\]
where \(\eta_t\) is the epoch stepsize used with radius \(R_s\).  Set
\(d_0:=0\), and for \(1\le s<K\) put
\[
    d_s:=2^{-11}\frac{R_{s-1}}{T_{s-1}^3},
    \qquad 1\le s<K.
\]
We now check that the explicit budget is large enough for the two restart
requirements: the inherited disagreement is small enough for the next
warm-started epoch, and the current epoch reduces the error by a factor two.
For \(0\le s<K\), \(R_s\ge \sqrt{\epsilon/\mu}\), and
\cref{eq:explicit-restart-budget} gives
\[
    N_s
    \le
    2^{19}\ell_\epsilon^2
    \left(
        1+\frac{\sigma^2}{\mu\epsilon}
        +M\sqrt{\frac{L}{\mu\rho}}
    \right).
\]
By the definition of \(\ell_\epsilon\) and the slack factor \(2^{100}\),
\[
    \log\frac{\Lambda_s}{\rho}
    \le
    2\ell_\epsilon.
\]
Indeed, after taking logarithms, the only extra contribution beyond the
quantity inside \(\ell_\epsilon\) is
\(2\log\ell_\epsilon+\log(100\cdot2^{19})\), which is at most
\(\ell_\epsilon\) because \(\ell_\epsilon\ge60\).
Consequently
\[
    B_s
    \le
    \frac{41}{\sqrt\rho}\ell_\epsilon,
    \qquad
    T_s
    =
    \frac{N_s}{MB_s}
    \ge
    \frac{2^{18}}{41}\sqrt{\frac{L}{\mu}}\ell_\epsilon
    \ge
    16.
\]
Here we used the standard consequence \(\mu\le L\) of \(\mu\)-strong
convexity and \(L\)-smoothness.
Since \(N_s\) is independent of \(s\) and \(R_s\) is decreasing, \(B_s\) is
nondecreasing and therefore \(T_s\) is nonincreasing.

The inherited disagreement then satisfies
\begin{equation}
\label{eq:restart-explicit-disagreement-small}
    d_s
    \le
    2^{-16}\frac{R_s}{T_s},
    \qquad
    \tau_s(2+3LS_s^\eta)d_s
    \le
    2^{-24}\frac{R_s}{T_s^3},
\end{equation}
for every \(1\le s<K\).  Indeed, \(R_{s-1}=\sqrt2R_s\) and
\(T_{s-1}\ge T_s\), so the first inequality follows from \(T_s\ge16\).  For
the second, put
\[
    \lambda_s:=
    \frac{32N_s}{\sqrt M}
    \left(1+\frac{\zeta_\star+\sigma}{LR_s}\right).
\]
As in the proof of \cref{thm:distributed-gossip}, \(\Lambda_s\ge e\lambda_s\)
and the choice of \(B_s\) imply \(\tau_s\le\lambda_s^{-7}\), while
\(T_s^2(T_s+1)^4\le\lambda_s^7\).  Since
\(LS_s^\eta\le LA_{T_s-1}=T_s(T_s+1)/256\),
\[
    \tau_s(2+3LS_s^\eta)
    \le
    \frac{4}{(T_s+1)^4}
    \le
    2^{-14},
\]
and the second inequality in \cref{eq:restart-explicit-disagreement-small}
follows from the first display defining \(d_s\).

The same explicit budget also gives the epoch-accuracy bound
\begin{equation}
\label{eq:epoch-halving-condition}
    \frac{20\sigma R_s}{\sqrt{N_s}}
    +
    \frac{2^{21}LR_s^2M^2}{\rho N_s^2}
    \log^2\frac{\Lambda_s}{\rho}
    \le
    \frac{\Delta_0}{2^{s+1}},
\end{equation}
for every \(0\le s<K\).  The stochastic term is at most
\((20\sqrt2/2^8)\Delta_s\le\Delta_s/4\), because
\(\Delta_s\ge\epsilon\).  The network term is at most
\[
    \frac{2^{23}LR_s^2M^2}{\rho N_s^2}\ell_\epsilon^2
    \le
    2^{-12}\frac{\Delta_s}{\ell_\epsilon^2}
    \le
    \frac{\Delta_s}{4}.
\]
We prove by induction that, at the beginning of epoch \(s\),
\[
    \E[f(\bar U^{(s)}_{\cdot 1})-f(x^\star)]\le \Delta_s,
    \qquad
    \E\norm{\bar U^{(s)}_{\cdot 1}-x^\star}^2\le R_s^2,
    \qquad
    \left(\E\norm{U^{(s)}-\bar U^{(s)}}_F^2\right)^{1/2}\le d_s.
\]
The claim is true at \(s=0\) by the definition of \(\Delta_0\) and strong
convexity, and by \(d_0=0\).  Suppose it holds at epoch \(s\).  The inherited
disagreement bound together with \cref{eq:restart-explicit-disagreement-small}
verifies the warm-start smallness assumption of
\cref{thm:warm-start-distributed-gossip} for the initialization \(U^{(s)}\).
Applying that theorem with radius \(R_s\), and then using
\cref{eq:epoch-halving-condition}, gives, for every node \(i\),
\[
    \E[f(U^{(s+1)}_{\cdot i})-f(x^\star)]
    \le
    \Delta_{s+1}.
\]
By convexity, the network average at the next restart also satisfies
\[
    \E[f(\bar U^{(s+1)}_{\cdot 1})-f(x^\star)]
    \le
    \frac1M\sum_{i=1}^M
    \E[f(U^{(s+1)}_{\cdot i})-f(x^\star)]
    \le
    \Delta_{s+1}.
\]
Strong convexity then gives
\[
    \E\norm{\bar U^{(s+1)}_{\cdot 1}-x^\star}^2
    \le
    \frac{2\Delta_{s+1}}{\mu}
    =
    R_{s+1}^2.
\]
If \(s<K-1\), then the output-disagreement estimate
\cref{eq:warm-start-output-disagreement} gives
\[
    \left(\E\norm{U^{(s+1)}-\bar U^{(s+1)}}_F^2\right)^{1/2}
    \le
    2^{-11}\frac{R_s}{T_s^3}
    =
    d_{s+1},
\]
which closes the induction.

After \(K\) epochs, \(\Delta_K\le\epsilon\), proving the accuracy claim.  The
sample-complexity bound is the explicit sum of the constant epoch budgets in
\cref{eq:explicit-restart-budget}; the factors \(K\) and
\(\ell_\epsilon^2\) are logarithmic in the target accuracy and problem
parameters, giving the stated \(\widetilde O\) form.
\end{proof}

\subsection{Proof of \texorpdfstring{\cref{thm:warm-start-distributed-gossip}}{Theorem 6}}

\begin{proof}[\unskip\nopunct]
Let
\[
    D_U^{\rm rms}:=
    \left(\E\norm{U-\bar U}_F^2\right)^{1/2}.
\]
For the warm initialization above,
\[
    D_{\rm init}^{\rm rms}=2D_U^{\rm rms},
    \qquad
    Q_{\rm init}^{\rm rms}=D_U^{\rm rms},
    \qquad
    \Delta_{\rm init}^{\rm rms}=(2+3LS_T^\eta)D_U^{\rm rms}.
\]
The direct effect of the initial query occurs only at \(t=0\).  Indeed,
\[
    \left(\E\norm{b_0}^2\right)^{1/2}
    \le
    \frac{L}{\sqrt M}D_U^{\rm rms},
\]
whereas for \(t\ge1\) the query is formed from already-gossiped primal copies.
Writing \(D_T^{\rm rms}:=\max_{0\le t\le T}
(\E\norm{X_t-\bar X_t}_F^2)^{1/2}\), the accelerated-gossip contraction gives
\[
    \left(\E\norm{Q_t-\bar Q_t}_F^2\right)^{1/2}
    \le
    5\tau D_T^{\rm rms},
    \qquad 1\le t<T.
\]
Repeating the proof of \cref{lem:distributed-disagreement} with this sharper
query-disagreement bound, and using the same verification of
\(5LS_T^\eta\tau\le1/2\) as in the proof of
\cref{thm:distributed-gossip}, gives, for \(t\ge1\),
\[
    \left(\E\norm{b_t}^2\right)^{1/2}
    \le
    10LS_T^\eta\tau
    \left(\zeta_\star+\frac{\sigma}{\sqrt B}+2LQ_T^{\rm rms}\right)
    +
    \frac{10L\tau\Delta_{\rm init}^{\rm rms}}{\sqrt M}.
\]
The assumption \cref{eq:warm-start-disagreement-small} implies
\[
    \frac{a_0}{A_{T-1}}\frac{L}{\sqrt M}D_U^{\rm rms}
    \le
    2^{-15}\frac{LR}{T^3},
    \qquad
    \frac{10L\tau\Delta_{\rm init}^{\rm rms}}{\sqrt M}
    \le
    2^{-20}\frac{LR}{T^3},
    \qquad
    2\tau\Delta_{\rm init}^{\rm rms}
    \le
    2^{-23}\frac{R}{T^3}.
\]
Thus the weighted RMS bias budget in \cref{thm:main} is enlarged by at most
\(2^{-14}LR/T^3\) compared with the consensual-start proof of
\cref{thm:distributed-gossip}.  Repeating the same radius-closing and
bias-absorption argument with this larger bias budget gives
\[
    \E[f(\bar x_T)-f(x^\star)]
    \le
    \frac{10\sigma R}{\sqrt N}
    +
    \frac{2^{20}LR^2M^2}{\rho N^2}
    \log^2 \frac{\Lambda_N}{\rho}.
\]
Here we use the RMS-start version of \cref{thm:main}, which follows from the
same proof by taking expectations in the initial term
\(\frac{\beta}{2}\norm{x^\star-x_0}^2\); hence it is enough that
\(R^2\ge\E\norm{\bar U_{\cdot,1}-x^\star}^2\).

The output-disagreement part of \cref{lem:distributed-disagreement} gives the
usual term from the proof of \cref{thm:distributed-gossip}, plus an additive
term of at most \(2^{-23}R/T^3\).  Hence
\[
    \left(\E\norm{\widetilde X_T-\bar X_T}_F^2\right)^{1/2}
    \le
    2^{-11}\frac{R}{T^3},
\]
after enlarging constants.  The same smoothness comparison between
\(\widetilde x_{T,i}\) and \(\bar x_T\) used at the end of
\cref{thm:distributed-gossip}'s proof then yields the stated local bound.
\end{proof}

\section{A Lower Bound for Linear-Span Decentralized Methods}
\label{app:linear-span-lower-bound}

This appendix proves the lower bound stated in \cref{thm:linear-span-lower-bound} for linear-span 
decentralized methods.  The linear-span restriction is important---it rules out methods that guess 
arbitrary directions unrelated to the gradients and messages seen so far---but it is a standard setting in which zero-chain lower-bound constructions are established \citep[e.g.,][]{nesterov2018lectures}.

Consider the path graph on \(M\) nodes, with \(M\ge 6\).  Let
\(P_{\rm path}=I-\frac14\mathsf L_{\rm path}\), where
\(\mathsf L_{\rm path}\) is the unnormalized Laplacian of the path.  This is a
symmetric doubly stochastic gossip matrix supported on the path, and its
spectral gap satisfies, by the standard spectrum of the path Laplacian
\citep{chung1997spectral},
\[
    \rho
    =
    \frac12\left(1-\cos\frac{\pi}{M}\right)
    =
    \Theta(M^{-2}).
\]
A linear-span decentralized first-order method is allowed, in each communication
round, to perform arbitrary local linear algebra on vectors already held by a
node, query the local gradient at points in the span of those vectors, and send
arbitrary vectors from this span to neighboring nodes.  Equivalently, if
\(\mathcal S_i(t)\) is the subspace available to node \(i\) after \(t\) gossip
communication rounds, then \(\mathcal S_i(0)=\{0\}\), and
\(\mathcal S_i(t+1)\) is contained in the span of \(\mathcal S_i(t)\), the
neighboring subspaces
\(\mathcal S_j(t)\), \(\{i,j\}\in E\), and local gradients evaluated at points
in that span.  All nodes are
initialized at the origin.  The proof below gives the method the additional
power of free local span closure inside each block, so the lower bound also
applies to the one-query-per-round linear-span model.

\begin{proof}[Proof of \cref{thm:linear-span-lower-bound}]
Let \(m:=\lfloor M/3\rfloor\), and partition the path into
\[
    V_L:=\{1,\ldots,m\},
    \qquad
    V_R:=\{M-m+1,\ldots,M\},
\]
and the remaining middle block.  The graph distance between \(V_L\) and
\(V_R\) is
\[
    \Delta:=M-2m+1=\Theta(M)=\Theta(\rho^{-1/2}).
\]
Set
\[
    s:=2+\left\lfloor \frac{S}{\Delta}\right\rfloor,
    \qquad
    d=2(s+1),
    \qquad
    a:=\frac{R}{\sqrt d}.
\]
For \(z\in\R^d\), define
\[
    H_L(z)
    :=
    z_1^2-2z_1
    +
    \sum_{\substack{1\le r<d\\ r\ {\rm odd}}}
    (z_r-z_{r+1})^2,
    \qquad
    H_R(z)
    :=
    \sum_{\substack{1\le r<d\\ r\ {\rm even}}}
    (z_r-z_{r+1})^2,
\]
and \(H:=H_L+H_R\).  Fix a sufficiently small universal constant
\(c_0>0\), for instance \(c_0=2^{-6}\), and define the local objectives by
\[
    f_i(x)
    :=
    c_0L\frac{M}{m}a^2H_L(x/a),
    \qquad i\in V_L,
\]
\[
    f_i(x)
    :=
    c_0L\frac{M}{m}a^2H_R(x/a),
    \qquad i\in V_R,
\]
and \(f_i(x):=0\) for all middle-block nodes.  Since \(m\ge M/4\) for
\(M\ge6\), and the Hessians of \(H_L\) and \(H_R\) have operator norm bounded
by an absolute constant, the choice of \(c_0\) ensures that each \(f_i\) is
convex and \(L\)-smooth.  Averaging over the nodes gives
\[
    f(x)=c_0La^2H(x/a).
\]
The vector \(z^\star=\mathbf 1\in\R^d\) minimizes both \(H_L\) and \(H_R\),
and hence \(x^\star:=a\mathbf 1\) minimizes \(f\).  Moreover,
\(\norm{x^\star}=a\sqrt d=R\).

We next record the zero-chain property.  Let
\[
    E_r:=\operatorname{span}\{e_1,\ldots,e_r\},
    \qquad E_0:=\{0\}.
\]
The odd-even split of the coordinate couplings is what turns the usual
zero-chain construction into a communication lower bound.  
If \(z\in E_r\), then a gradient of \(H_L\) can introduce the new coordinate
\(e_{r+1}\) only when \(r\) is odd, while a gradient of \(H_R\) can introduce
the new coordinate \(e_{r+1}\) only when \(r\) is even.  This follows directly
from the fact that \(H_L\) contains only the odd couplings
\((z_r-z_{r+1})^2\), whereas \(H_R\) contains only the even couplings.  
The left objective
contains the linear term and the couplings \((1,2),(3,4),\ldots\), while the
right objective contains the couplings \((2,3),(4,5),\ldots\).  Thus the left
block can start the chain and reveal coordinate \(2\), but the next coupling
needed to reveal coordinate \(3\) is available only in the right block.  After
the right block reveals coordinate \(3\), the next coupling needed to reveal
coordinate \(4\) is again available only in the left block, and so on.

For \(r\ge2\), coordinate \(r+1\) must be generated in \(V_R\) when \(r\) is
even and in \(V_L\) when \(r\) is odd, from a vector whose \(r\)-th coordinate
is already nonzero.  These generating blocks alternate.  Since a message can
move by at most one edge per communication round, information needs at least
\(\Delta\) rounds to travel from one generating block to the other.  Therefore,
even in the strengthened model that allows arbitrary local span closure inside
each block, no coordinate beyond \(E_s\) can appear anywhere in the network
after \(S\) gossip communication rounds.  Hence every local output, and also
their average \(\bar x_S\), belongs to \(E_s\).

It remains to lower bound the objective gap of vectors in \(E_s\).  For
\(z=x/a\), the identity
\[
    H(z)-H(\mathbf 1)
    =
    (z_1-1)^2+\sum_{r=1}^{d-1}(z_r-z_{r+1})^2
\]
holds.  If \(x\in E_s\), then \(z\in E_s\), so \(z_{s+1}=0\).  By
Cauchy-Schwarz inequality,
\[
    1
    =
    (1-z_1)+(z_1-z_2)+\cdots+(z_s-z_{s+1})
\]
implies
\[
    (z_1-1)^2+\sum_{r=1}^{s}(z_r-z_{r+1})^2
    \ge
    \frac1{s+1}.
\]
Consequently, every \(x\in E_s\) satisfies
\[
    f(x)-f(x^\star)
    =
    c_0La^2\bigl(H(x/a)-H(\mathbf 1)\bigr)
    \ge
    \frac{c_0LR^2}{d(s+1)}
    =
    \frac{c_0LR^2}{2(s+1)^2}.
\]
Since \(s+1\le 3+S/\Delta\) and \(\Delta=\Theta(\rho^{-1/2})\), the last
display is bounded below by
\[
    cLR^2
    \min\left\{1,\frac1{\rho S^2}\right\}
\]
for another universal constant \(c>0\).  Applying this to
\(x=\bar x_S\in E_s\) proves the theorem.
\end{proof}

\end{document}